\newcommand{\color}[6]{}
\newtheorem{prop}{Proposition}[section]
\newtheorem{lem}[prop]{Lemma}
\newtheorem{defn}[prop]{Definition}
\newtheorem{thm}[prop]{Theorem}
\newtheorem{conj}[prop]{Conjecture}
\theoremstyle{definition}
\newtheorem{rem}[prop]{Remark}
\newcommand{\Comm}{\mathrm{C}}
\newcommand{\CComm}{\mathcal{C}}
\newcommand{\bdim}{\textbf{dim}}
\renewcommand{\AA}{\mathbb{A}}
\newcommand{\GG}{\mathbb{G}}
\newcommand{\LL}{\mathbb{L}}
\newcommand{\LLL}{\mathcal{L}}
\newcommand{\PP}{\mathbb{P}}
\renewcommand{\SS}{\mathfrak{M}}
\newcommand{\TT}{\mathcal{T}}
\newcommand{\VY}{\mathbb{A}}
\newcommand{\XX}{\mathcal{X}}
\newcommand{\YY}{\mathcal{Y}}
\newcommand{\muu}{\hat{\mu}}
\newcommand{\muhat}{\hat{\mu}}
\newcommand{\CC}{\mathcal{C}}
\newcommand{\DD}{\mathcal{D}}
\newcommand{\ZZ}{\mathbb{Z}}
\newcommand{\NN}{\mathbb{N}}
\newcommand{\nn}{\mathbf{n}}
\newcommand{\ehat}{\hat{e}}
\newcommand{\HH}{\mathbb{H}}
\newcommand{\OO}{\mathcal{O}}
\newcommand{\tw}{\mathtt{tw}}
\newcommand{\ob}{\mathtt{ob}}
\newcommand{\one}{\mathbf{1}}
\newcommand{\VV}{\mathcal{V}}
\newcommand{\sDet}{\mathtt{sDet}}
\newcommand{\TF}{\mathfrak{TW}}
\newcommand{\TN}{\mathfrak{nilp}}
\renewcommand{\TT}{\mathfrak{T}}
\newcommand{\EE}{\mathcal{E}}
\DeclareMathOperator{\Bkron}{\mathcal{B}}
\DeclareMathOperator{\rank}{rank}
\DeclareMathOperator{\KW}{con}
\DeclareMathOperator{\triang}{triang}
\DeclareMathOperator{\thick}{thick}
\DeclareMathOperator{\JH}{\mathtt{JH}}
\DeclareMathOperator{\KS}{KS}
\DeclareMathOperator{\rel}{rel}
\DeclareMathOperator{\op}{op}
\DeclareMathOperator{\Fun}{Fun}
\DeclareMathOperator{\Tot}{Tot}
\DeclareMathOperator{\virt}{virt}
\DeclareMathOperator{\kron}{Kron}
\DeclareMathOperator{\BBS}{\mathtt{BBS}}
\DeclareMathOperator{\edge}{E}
\DeclareMathOperator{\verts}{V}
\DeclareMathOperator{\BGL}{BGl}
\DeclareMathOperator{\sss}{ss}
\DeclareMathOperator{\con}{con}
\DeclareMathOperator{\SP}{sp}
\DeclareMathOperator{\Ker}{Ker}
\DeclareMathOperator{\End}{End}
\DeclareMathOperator{\rmod}{\mathtt{Mod-}}
\DeclareMathOperator{\Coh}{Coh}
\DeclareMathOperator{\DDb}{D^b}
\DeclareMathOperator{\KK}{K}
\DeclareMathOperator{\Var}{Var}
\DeclareMathOperator{\Sch}{Sch}
\DeclareMathOperator{\Sta}{St^{aff}}
\DeclareMathOperator{\Gl}{Gl}
\DeclareMathOperator{\Spec}{Spec}
\DeclareMathOperator{\Sym}{Sym}
\DeclareMathOperator{\pt}{pt}
\DeclareMathOperator{\Ho}{H}
\DeclareMathOperator{\Der}{D^b}
\DeclareMathOperator{\Derbe}{D^b_{exc}}
\DeclareMathOperator{\Derf}{D_{f.d}}
\DeclareMathOperator{\Dercpct}{D_{cpct}}
\DeclareMathOperator{\Dern}{D_{nilp}}
\DeclareMathOperator{\DER}{D}
\DeclareMathOperator{\Vect}{Vect}
\DeclareMathOperator{\Lo}{\mathcal{L}}
\DeclareMathOperator{\Hom}{Hom}
\DeclareMathOperator{\RHom}{RHom}
\DeclareMathOperator{\Ext}{Ext}
\DeclareMathOperator{\Mat}{Mat}
\DeclareMathOperator{\tr}{tr}
\DeclareMathOperator{\Cp}{\mathbb{C}}
\DeclareMathOperator{\nilp}{nilp}
\DeclareMathOperator{\Image}{Image}
\DeclareMathOperator{\mm}{\mathbf{m}}
\DeclareMathOperator{\id}{id}
\DeclareMathOperator{\constr}{constr}
\title{The motivic Donaldson--Thomas invariants of (-2)-curves}
\author{Ben Davison, Sven Meinhardt}
\begin{document}

\maketitle

\begin{abstract}
In this paper we calculate the motivic Donaldson--Thomas invariants for (-2)-curves arising from 3-fold flopping contractions in the minimal model programme.  We translate this geometric situation into the machinery developed by Kontsevich and Soibelman \cite{KS}, and using the results and framework of \cite{DM11a} we describe the monodromy on these invariants.  In particular, in contrast to all existing known Donaldson--Thomas invariants for small resolutions of Gorenstein singularities these monodromy actions are nontrivial.
\end{abstract}

\tableofcontents

\section{Introduction}
Motivic Donaldson--Thomas invariants were introduced in \cite{KS}, as a generalisation of the classical theory of Donaldson--Thomas invariants initiated in \cite{casson}.  At the same time Joyce \cite{Jo06a,Jo06b,Jo07a,Jo07c,JoyceHGF,Jo07b,Jo08a} and Joyce and Song \cite{JS08} rigorously extended classical Donaldson--Thomas theory to take care of the technicalities involved in dealing with strictly semistable coherent sheaves on Calabi--Yau 3-folds, and in this framework formulated a deep integrality conjecture regarding the resulting Donaldson--Thomas invariants.  Assuming the more ambitious framework of \cite{KS}, integrality properties of generalised Donaldson--Thomas invariants are conjecturally obtained by taking the Euler characteristic of motivic Donaldson--Thomas invariants, after multiplication by the motive $\Cp^*$; such statements are supposed to be a shadow of the fact that these invariants, which are a priori only stack valued, are in fact variety valued, so that taking Euler characteristic is legitimate, and produces integers.
\medbreak
If $Y\rightarrow X$ is a small resolution of a toric Gorenstein singularity, the calculation of the motivic Donaldson--Thomas invariants of $Y$ has by now received a fairly comprehensive treatment (see \cite{BBS,MMNS11,MN11}).  Let $\KK^{\muhat}(\Var/\Spec(\Cp))$ be the ring of $\muhat$-equivariant varieties, then there is a ring homomorphism \\$\KK^{\muhat}(\Var/\Spec(\Cp))[\LL^{1/2}]\rightarrow \ZZ[q^{1/2}]$, obtained by first taking the Hodge spectrum, a homomorphism to the ring of polynomials in fractional powers of two variables $u$ and $v$, and then specialising $u=v=q^{1/2}$.  Furthermore, this is a retraction of rings, since there is a right inverse taking $q^{1/2}$ to $-\LL^{1/2}$.  The Donaldson--Thomas invariants that arise in the study of the above toric resolutions $Y\rightarrow X$ all lie in the obviously very well-understood subring that is the image of this retract.
\medbreak
By contrast, the ring $\KK^{\muhat}(\Var/\Spec(\Cp))[\LL^{1/2}]$, as a whole, has a rich ring structure, with the product given by Looijenga's ``exotic'' convolution product (see \cite{Looi00,GLM06,KS}), and a pre $\lambda$-ring structure utilised in \cite{DM11a} to express the motivic DT invarants of the one loop quiver with potential --- this was the first case to really make use of this extra structure.
\medbreak
The present paper represents perhaps the first case where ``natural'' Donaldson--Thomas invariants living in the interesting part of the ring $\KK^{\muhat}(\Var/\Spec(\Cp))[\LL^{1/2}]$ are discussed.  Of course the question of naturalness here is subjective --- we are appealing to the sensibilities of algebraic geometers, in that we consider an example that is manifestly a part of 3-dimensional geometry, as opposed to the case of the one loop quiver with potential, which in the homogeneous case gives rise to the algebra $\Cp[x]/(x^d)$, which looks rather more like zero-dimensional geometry.  More specifically, we consider the motivic Donaldson--Thomas invariants of (-{2})-curves, which are, for us, resolutions $Y_d\rightarrow X_d$ of singularities as defined in Equation (\ref{Xddef}).  In birational geometry and physics, these curves have a very long and rich history, see \cite{Rei83,Lau81,Kol89,KaMo92} for example.
\medbreak
Our paper also seems to represent the first serious attempt to calculate Donaldson--Thomas invariants while keeping as true as possible to the framework of \cite{KS}.  A side-effect of this approach is that some discussion of orientation data is necessitated.  It is hoped that seeing this aspect of the story in action will help to demystify it a little.  For the sake of those who would like to swap the (ever decreasingly) conjectural framework of \cite{KS} for the single very reasonable-looking conjecture of \cite{DM11a}, we prove a slight variant of our main result at the end of the paper, avoiding all mention of orientation data, cyclic 3-Calabi--Yau categories and minimal potentials.  
\medbreak
In both cases we work with an algebraic model of the derived category of compactly supported coherent sheaves on $Y_d$, provided by considering modules over an algebra $A_{Q_{-2},W_d}$, which is the free path algebra of the quiver in Figure \ref{NCres}, quotiented by some relations determined by the noncommutative derivatives of a potential $W_d$.  Our main result is Theorem \ref{mainthm}, which states that
\begin{equation}
\label{fivefour}
\Phi_{Q_{-2},W_d}\left([\XX^{\nilp}_{Q_{-2},W_d}]\right)=\Sym\left(\sum_{n\geq 0}\frac{\LL^{-1/2}(1-[\mu_{d+1}])}{\LL^{1/2}-\LL^{-1/2}}\left(\ehat_{(n,n+1)}+\ehat_{(n+1,n)}\right)+\sum_{n\geq 1}\frac{\LL^{-1/2}+\LL^{-3/2}}{\LL^{1/2}-\LL^{-1/2}}\ehat_{(n,n)}\right),
\end{equation}
where the quantity on the left hand side is by definition the motivic generating series for nilpotent modules over $A_{Q_{-2},W_d}$, which on the geometric side of Van den Bergh's equivalence corresponds to counting coherent sheaves on the exceptional locus of $Y_d\rightarrow X_d$.  In more detail, the variables $\ehat_{(n,m)}$ keep track of the Chern classes of the sheaves we are counting, under the transformation
\[
(n,m)\mapsto (n-m)[C_d]+m[\pt],
\]
where $C_d$ is the exceptional curve of the resolution $Y_d\rightarrow X_d$.  Equation (\ref{fivefour}) implies, by the definition of motivic Donaldson--Thomas invariants $\Omega^{\nilp}$, that they are given by 
\[
\Omega^{\nilp}(\nn)=\begin{cases} \LL^{-1/2}(1-[\mu_{d+1}])& \textrm{if }\nn=(n,n+1)\textrm{ or }\nn=(n+1,n)\\ \PP^1\cdot \LL^{-3/2}&\textrm{if }\nn=(n,n).\end{cases}
\]
Here $\mu_{d+1}$ is considered as a $\mu_{d+1}$-equivariant variety in the natural way, and so we have indeed produced motivic Donaldson--Thomas invariants with nontrivial monodromy, arising ``in nature'' e.g. string theory, and confirmed integrality, all the way up to the motivic level, for the Donaldson--Thomas invariants of (-2)-curves.
\medbreak
The structure of the paper is as follows.  In Section \ref{gengeom} we collect together facts regarding the algebraic geometry and noncommutative algebraic geometry of (-2)-curves, in particular introducing the explicit noncommutative algebra $A_{Q_{-2},W_d}$ whose noncommutative Donaldson--Thomas theory we subsequently study.  This version of Donaldson--Thomas theory is motivic; in Section \ref{motsec} we explain what the word ``motivic'' means, by introducing all the relevant technicalities on motivic vanishing cycles, motivic Hall algebras and pre $\lambda$-ring structures on ``naive'' Grothendieck rings of motives.  These are the rings in which motivic DT invariants live.  In Section \ref{motdt} we explain how these invariants are defined; we introduce requisite definitions and facts regarding 3--Calabi--Yau categories and orientation data.  Orientation data is a concept introduced in \cite{KS}, and is an extra structure that one must put on a 3--Calabi--Yau category in order to be able to define motivic DT invariants for that category.  Furthermore, the motivic DT invariants will in general depend on the choice that we make.  We recall how to control this choice with Proposition \ref{comparisonthm}, which states that for the natural choice of orientation data provided by the presentation of a Jacobi algebra as the algebra arising from a quiver with potential, the integration map agrees with an analogue of the integration map considered by Behrend, Bryan and Szendr\H{o}i in \cite{BBS}.  Finally, in Section \ref{calcs} we present our results.  To start with, we work within the framework of motivic Donaldson--Thomas theory established by Kontsevich and Soibelman in \cite{KS} to prove Theorem \ref{mainthm}, which is Equation (\ref{fivefour}), and concerns the Donaldson--Thomas theory of sheaves on $Y_d$ supported on the exceptional locus --- in particular we calculate the contribution of the exceptional curve itself.  Secondly, we present a calculation of the motivic DT invariants of the category of compactly supported sheaves on the whole of $Y_d$, working with the somewhat more down-to-earth integration map of Behrend, Bryan and Szendr\H{o}i, and a conjectural identity regarding motivic vanishing cycles.
\section{The geometry of (-2)-curves}
\label{gengeom}
In this paper we study the motivic Donaldson--Thomas invariants of local (-2)-curves, which are defined in the following way, following \cite[Sec.5]{Rei83}.  We assume that $f\colon Y\rightarrow X$ is a resolution of a Gorenstein complex 3-fold singularity with exceptional curve $C\cong\PP^1$, satisfying the condition that $f^*\omega_X\cong\omega_{Y}$, $\omega_Y\cdot C=0$ and $N_{C|Y}\cong \OO_{C}\oplus \OO_{C}(-2)$ or $N_{C|Y}\cong \OO_{C}(-1)\oplus \OO_{C}(-1)$.  Then (see \cite[(5.13)]{Rei83} and the surrounding discussion) we may assume that $X$ is one of the singularities
\begin{equation}
\label{Xddef}
X_d=\Spec \Bigl(\Cp[x,y,z,w]/\bigl(x^2+y^2+(z+w^d)(z-w^d)\bigr) \Bigr)
\end{equation}
for $d\geq 1$, and $Y$ is given by one of the two resolutions provided by blowing up along $0=x=z\pm w^d$.  We denote by $Y_d$ the blowup along $0=x=z+w^d$, and by $Y_d^+$ the blowup along $0=x=z-w^d$.  We will refer to the exceptional rational curve in $Y_d$ always as $C_d$, to make it clear which resolution of singularities it belongs to.  The birational morphism $Y_d\dashrightarrow Y_d^+$ is the flop of the curve $C_d$, and there is an equivalence of categories 
\begin{equation}
\label{flopDE}
\DDb(\Coh(Y_d))\rightarrow \DDb(\Coh(Y_d^+))
\end{equation}
with Fourier--Mukai kernel $\OO_{Y_d\times_{X_d}Y^+_d}$.  This is an example of a generalised spherical twist (see \cite{Tod07}).  This equivalence is not given by an equivalence of the hearts of these two categories (even though they are in fact equivalent, as there is an obvious isomorphism of schemes $Y_d\rightarrow Y_d^+$).  As in \cite[(5.3)]{Rei83} one defines the width\footnote{Not to be confused with the \textit{length} of $C_d$, which is an entirely different invariant introduced by Koll\'ar in \cite{CKM88} and used in the classification by S. Katz and D. Morrison \cite{KaMo92} of irreducible small resolutions of Gorenstein 3-fold singularities.} of $C_d$ to be the length of the component of the moduli space of coherent sheaves on $Y_d$ containing $\OO_{C_d}$.  One can show from the explicit description of $X_d$ and $Y_d$ that the width of $C_d\subset Y_d$ is $d$. \medbreak
For the purposes of this paper we will be interested in a derived equivalence that is different to that of Equation (\ref{flopDE}). That is, we will be interested in a derived equivalence between the category of coherent sheaves on $Y_d$ and the category of finitely generated right modules $\rmod A_{Q_{-2},W_d}$ for a noncommutative algebra $A_{Q_{-2},W_d}$.  The approach to defining and studying Donaldson--Thomas invariants of categories of coherent sheaves is as initiated by Szendr\H{o}i in \cite{conifold}, where the case of the ``noncommutative conifold'' is considered, and indeed we will recover (motivic) Donaldson--Thomas invariants for the noncommutative conifold, as it is a (-2)-curve of width 1.\footnote{Note that the motivic Donaldson--Thomas invariants we obtain for the conifold differ from those of \cite{conifold,MMNS11}; this is a result of a different choice of orientation data, in the terminology of \cite{KS}.  We revisit this subtle point in Remark \ref{conremark}.}\medbreak
The existence of the algebra $A_{Q_{-2},W_d}$ satisfying 
\begin{equation}
\label{ncDE}
\xymatrix{
\DDb(\rmod A_{Q_{-2},W_d})\ar[r]^-{\sim}&\DDb(\Coh(Y_d))
}
\end{equation}
is provided by the results of Van den Bergh \cite[Thm.5.1]{NonComCrep}.  It will help to have an explicit description of $Y_d$.  It is covered by two coordinate patches 
\begin{align*}
U_1=&\Spec(\Cp[x,y_1,y_2])\\
U_2=&\Spec(\Cp[w,z_1,z_2]), 
\end{align*}
which are glued along
\begin{align*}
x=&w^{-1}\\
z_1=&x^2y_1+xy_2^d\\
z_2=&y_2.
\end{align*}
In the case of the conifold, after the change of coordinates $z'_1=wz_1-z_2$, $z_2'=-(1+w)z_1+z_2$, $y'_1=y_1$, $y'_2=(x+1)y_1+y_2$, we recover the usual presentation of the resolved conifold as the total space of the bundle $\OO_{C_1}(-1)\oplus\OO_{C_1}(-1)$ over $C_1\cong\PP^1$.  We define $\OO_{Y_d}(-n):=\OO_{Y_d}(nD)$ where $D$ is the divisor cut out by the equation $x=0$ in the above coordinate patches.  Then by Van den Bergh's theorem, we have a derived equivalence as in (\ref{ncDE}) if we set 
\begin{equation}
\label{Appear}
A_{Q_{-2},W_d}:=\End_{Y_d}(E_d), 
\end{equation}
where we define \[
E_d:=\OO_{Y_d}\oplus\OO_{Y_d}(-1).
\]
We follow the convention of \cite{KaMo06}, representing morphisms between the two line bundles $\OO_{Y_d}$ and $\OO_{Y_d}(-1)$ by elements of $\Cp[w,z_1,z_2]$ under the identifications $\Gamma(U_2,\OO_{Y_d})\cong\Cp[w,z_1,z_2]\cong \Gamma(U_2,\OO_{Y_d}(-1))$.  The endomorphism algebra can then be represented by the quiver algebra depicted in Figure \ref{NCres}.  We have the relations
\begin{align}
\label{relations}
AX=&YA\\
\nonumber BX=&YB\\
\nonumber XC=&CY\\
\nonumber XD=&DY\\
\nonumber X^d=&CA-DB\\
\nonumber Y^d=&AC-BD.
\end{align}
It follows that $A_{Q_{-2},W_d}$ admits a superpotential description in the sense of \cite{ginz}, with quiver $Q_{-2}$ given by the quiver of Figure \ref{NCres} and superpotential given by 
\begin{equation}
W_d=\frac{1}{d+1}X^{d+1}-\frac{1}{d+1}Y^{d+1}-XCA+XDB+YAC-YBD.
\end{equation}
That is, we have an isomorphism
\begin{equation}
\label{jacdefe}
A_{Q_{-2},W_d}\cong \mathbb{C} Q_{-2}/\langle \partial W_d/\partial E,E\in \edge(Q_{-2})\rangle,
\end{equation}
where for a general quiver $Q$ and $W\in \mathbb{C} Q/[\mathbb{C} Q, \mathbb{C} Q]$ given by a single cycle, and $E\in \edge(Q)$ an arrow,
\begin{equation}
\label{ncder}
\partial W/\partial E:=\sum_{aEb=W ,\; a\mbox{ \scriptsize and } b \mbox{ \scriptsize  paths in } Q}ba,
\end{equation}
and for general $W$, $\partial W/\partial E$ is defined by extending linearly.
\begin{defn}
\label{jacdef}
For a general quiver $Q$ with potential $W$, we define $A_{Q,W}$ in the same way as in Equation (\ref{jacdefe}).  This is called the Jacobi algebra associated to the pair $(Q,W)$.
\end{defn}

\medbreak
\begin{figure}
\centering
\input{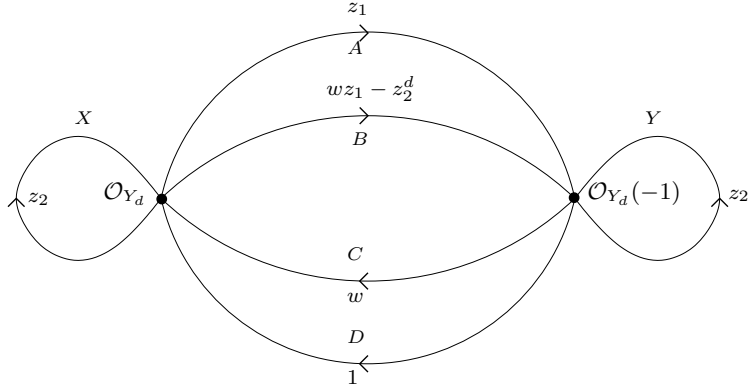}
\caption{The quiver $Q_{-2}$.  The vertices are marked with the summands of the bundle $E_d$, and the arrows are marked with morphisms between these summands.}
\label{NCres}
\end{figure}

\begin{rem}
\label{conifold}
In the case of the conifold (i.e.\ if $d=1$) there is a simpler Jacobi algebra presentation of the noncommutative resolution $\End_{Y_d}(E_d)$, considered in \cite{conifold}, see also Remark \ref{conremark}.  The quiver is given by $Q_{\mathrm{con}}$, which is $Q_{-2}$ with the two loops $X$ and $Y$ removed --- see Figure \ref{KWquiver}.  One sets $W_{\mathrm{con}}=ACBD-ADBC$, and one can show directly that $A_{Q_{\mathrm{con}},W_{\mathrm{con}}}\cong A_{Q_{-2},W_1}$.  Note that the relations (\ref{relations}) imply the relations given by the noncommutative derivatives of $W_{\con}$, considered as a superpotential for $Q_{-2}$.  As a result one may consider the morphisms assigned to $X$ and $Y$ for a $A_{Q_{-2},W_d}$-module $M$ as being together an endomorphism of a module $M_{\con}$ for $A_{Q_{\con},W_{\con}}$, where $M_{\con}$ in turn is determined by the morphisms assigned to $A$, $B$, $C$ and $D$ by $M$, via the forgetful map.
\end{rem}
\section{Naive Grothendieck rings of motives}
\label{motsec}
\subsection{A pre $\lambda$-ring of motives}
\label{lambdarings}
In this section we recall the construction of ``naive'' Grothendieck pre $\lambda$-rings of $\muhat$-equivariant motives, or motives carrying a monodromy action.  The reason for introducing such rings is that they are the natural home of motivic vanishing cycles, which carry monodromy actions in analogy with their sheaf-theoretic cousins.  The reason for taking special care of the monodromy is that while in general the map induced on naive Grothendieck rings of motives by forgetting the monodromy will be a homomorphism of underlying groups, it will fail to respect the multiplication or pre $\lambda$-ring operations.  In particular, both the ``integration map'' (\ref{intdef}) of Kontsevich and Soibelman and the map (\ref{BBSmap}) generalising the map exploited by Behrend, Bryan and Szendr\H{o}i in \cite{BBS} will fail to be algebra homomorphisms for general quivers with potential if we forget monodromy.  
\medbreak
For $\SS$ an Artin stack locally of finite type over $\Cp$ we define $\KK_0(\Sta/\SS)$ to be the Abelian group which is generated by isomorphism classes of morphisms $X\xrightarrow{f} \SS$ of finite type, with $X$ a separated reduced stack over $\Cp$ satisfying the condition that each of its $\Cp$-points has affine stabiliser, subject to the relations 
\[ 
[X\xrightarrow{f} \SS ] \sim [Z\xrightarrow{f|_Z} \SS ] + [X\setminus Z \xrightarrow{f|_{X\setminus Z}} \SS], 
\] 
for $Z\subset X$ a closed substack of $X$.  If 
\[
(\SS,\epsilon\colon \SS\times\SS\rightarrow \SS,0\colon\Spec(\Cp)\rightarrow \SS)
\]
is a (commutative) monoid in the category of Artin stacks over $\Cp$ with $\epsilon$ of finite type, then $\KK_0(\Sta/\SS)$ acquires the structure of a (commutative) $\KK_0(\Sta/\Spec(\Cp))$-algebra, via convolution and the inclusion 
\[
[X\xrightarrow{f} \Spec(\Cp)]\mapsto [X\xrightarrow{0\circ f} \SS].
\]
There are obvious $G$-equivariant versions $\KK_0^G(\Sta/\SS)$ of the above groups and rings for $G$-equivariant stacks or monoids $\SS$, where we work with $G$-equivariant morphisms and assume that every point in $X$ lies in a $G$-equivariant affine neighbourhood.  Again we consider $\KK^G_0(\Sta/\SS)$ as a $\KK_0(\Sta/\Spec(\Cp))$-algebra if $\SS$ is a monoid in the category of locally finite type $G$-equivariant Artin stacks with finite type monoid map.  For technical reasons it is better to make the following modifications to $\KK^G_0(\Sta/\SS)$, forming the modified ring $\KK^G(\Sta/\SS)$:
\begin{enumerate}
\item
\label{rel1}
If $X'\xrightarrow{\pi} X$ is a $G$-equivariant vector bundle of rank $r$ then we impose the relation 
\[
[X'\xrightarrow{f\circ \pi}\SS]\sim \LL^r\cdot [X\xrightarrow{f}\SS]
\]
in $\KK^G(\Sta/\SS)$, where $\LL$ is the class of the affine line $\mathbb{A}_{\Cp}^1$ in $\KK(\Sta/\Spec(\Cp))$. 
\item
\label{con2}
In addition, we complete with respect to the topology having as closed neighbourhoods of zero the subgroups 
\[
\KK_{\mathfrak{U}}:=\{L\in \KK^G_0(\Sta/\SS)\textrm{ such that }L|_{\mathfrak{U}}=0\}
\]
for $\mathfrak{U}\subset \SS$ an open substack.  In the sequel we always complete with respect to the analogous system of neighbourhoods, so for example the statement of Proposition \ref{localis} concerns expressions with infinitely many denominators $[\Gl_{\Cp}(n)]^{-1}$ if the stack $\SS$ is not of finite type.  If the base stack $\SS$ is of finite type this second modification makes no difference.
\end{enumerate}
\medbreak
We define in the natural way the subgroup (or subring, if $\SS$ is a monoid) $\KK^G(\Var/\SS)$, spanned by classes $[X\rightarrow \SS]$ for $X$ a $G$-equivariant variety over $\Cp$.
\medbreak
By \cite[Prop.1.1]{Eke09} there is an equality $[\Gl_{\Cp}(n)]=\prod_{0\leq i \leq n-1}(\LL^n-\LL^i)$ in $\KK(\Var/\Spec(\Cp))$.

\begin{prop}[cf.\ \cite{Eke09}, Theorem 1.2]
\label{localis}
The natural map 
\[
\Psi\colon\KK^G(\Var/\SS)[[\Gl_{\Cp}(n)]^{-1},n\in\NN]\rightarrow \KK^G(\Sta/\SS)
\]
is an isomorphism.
\end{prop}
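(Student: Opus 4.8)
The plan is to reduce the statement to the corresponding statement for schemes, where a theorem of Ekedahl (cited as \cite{Eke09}, Theorem 1.2) applies, and then to bootstrap back up to the equivariant, relative setting by a dévissage argument. First I would recall that $\KK^G(\Sta/\SS)$ is generated, as a group, by classes $[X\xrightarrow{f}\SS]$ with $X$ a reduced separated stack all of whose $\Cp$-points have affine stabiliser. The key geometric input is that such a stack admits a stratification by locally closed substacks each of which is a quotient $[V/\Gl_{\Cp}(n_i)]$ for some variety $V$ and some $n_i$; this is the standard observation that an affine-stabiliser stack is, Zariski-locally on a suitable stratification, a global quotient by a general linear group (one embeds the stabilisers and uses that $\Gl$-torsors are Zariski-locally trivial, together with the fact that any algebraic group embeds in some $\Gl_{\Cp}(n)$, so that a torsor under it is dominated by a $\Gl_{\Cp}(n)$-torsor). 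Using the cut-and-paste relation, $[X\xrightarrow{f}\SS]=\sum_i[[V_i/\Gl_{\Cp}(n_i)]\to\SS]$.

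Next I would show that in $\KK^G(\Sta/\SS)$ one has $[[V/\Gl_{\Cp}(n)]\to\SS]=[\Gl_{\Cp}(n)]^{-1}\cdot[V\to\SS]$, where $V\to\SS$ is the composite $V\to[V/\Gl_{\Cp}(n)]\to\SS$. This is exactly where relation (\ref{rel2}) is used: the map $V\to[V/\Gl_{\Cp}(n)]$ is a $\Gl_{\Cp}(n)$-torsor, hence (being Zariski-locally trivial, and $\Gl_{\Cp}(n)$ being special) it becomes, after stratifying the base, a product with $\Gl_{\Cp}(n)$; but $\Gl_{\Cp}(n)$ itself is, up to the class $\LL^{\binom n2}$, a product of tori and affine spaces, and more to the point the relation $[\Gl_{\Cp}(n)\times X\to\SS]=[\Gl_{\Cp}(n)]\cdot[X\to\SS]$ holds in $\KK^G(\Sta/\SS)$ by the vector-bundle relation applied iteratively to the standard filtration of $\Gl_{\Cp}(n)$ by affine-space bundles over smaller $\Gl$'s (the Bruhat-type cell decomposition, or simply the column-reduction filtration). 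Hence every generator of $\KK^G(\Sta/\SS)$ lies in the image of $\KK^G(\Var/\SS)[[\Gl_{\Cp}(n)]^{-1},n\in\NN]$, which gives surjectivity.

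For injectivity I would argue that the localisation map is well-defined and then construct an inverse on generators: send $[X\xrightarrow{f}\SS]\in\KK^G(\Sta/\SS)$, after stratifying as above, to $\sum_i[\Gl_{\Cp}(n_i)]^{-1}[V_i\to\SS]$, and check this is independent of the chosen stratification. Independence follows from the fact that any two stratifications have a common refinement, together with the already-established identity $[[V/\Gl_{\Cp}(n)]\to\SS]=[\Gl_{\Cp}(n)]^{-1}[V\to\SS]$ and the multiplicativity $[\Gl_{\Cp}(m+n)]=[\Gl_{\Cp}(m)]\cdot[\Gl_{\Cp}(n)]\cdot\LL^{mn}\cdot(\text{unit})$-type relations that let one compare presentations of the same quotient stack by different groups. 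The relative parameter $\SS$ plays no role beyond being carried along, since all constructions are over $\SS$; relation (\ref{rel1}) ensures that everything is detected on finite-type open substacks, so one may assume $\SS$ itself of finite type throughout. The $G$-equivariance is likewise formal once one knows, from the hypothesis that every point has a $G$-equivariant affine neighbourhood, that the stratifications above can be taken $G$-equivariant.

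The main obstacle is the second step — verifying carefully that the vector-bundle relation (\ref{rel2}), which a priori only concerns honest vector bundles, really does force $[\Gl_{\Cp}(n)\times X]=[\Gl_{\Cp}(n)]\cdot[X]$ and hence the torsor identity $[[V/\Gl_{\Cp}(n)]\to\SS]=[\Gl_{\Cp}(n)]^{-1}[V\to\SS]$ in the \emph{stack} Grothendieck group. The delicate point is that a $\Gl_{\Cp}(n)$-torsor over a stack need not be an iterated vector bundle globally; one genuinely needs to pass to a stratification of the base over which it trivialises, and then to filter $\Gl_{\Cp}(n)$ itself by affine-bundle maps. Once this identity is in hand, the rest is the formal dévissage sketched above, and agreement with the scheme-level statement of Ekedahl pins down that the construction is consistent.
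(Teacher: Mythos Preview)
The paper does not supply its own proof of this proposition: it is stated with the attribution ``cf.\ \cite{Eke09}, Theorem 1.2'' and used as a black box thereafter. Your sketch is a reasonable reconstruction of the standard argument (stratify an affine-stabiliser stack into global quotients $[V/\Gl_{\Cp}(n)]$, use specialness of $\Gl_{\Cp}(n)$ together with the vector-bundle relation to rewrite each piece as $[\Gl_{\Cp}(n)]^{-1}[V]$, then check the resulting inverse is well defined), and this is indeed the shape of Ekedahl's proof.

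Two points of caution. First, your aside that $[\Gl_{\Cp}(m+n)]=[\Gl_{\Cp}(m)]\cdot[\Gl_{\Cp}(n)]\cdot\LL^{mn}\cdot(\text{unit})$ is not correct as written --- that product is the class of the block-upper-triangular parabolic, not of $\Gl_{\Cp}(m+n)$. What one actually uses to compare two presentations $[V_1/\Gl_{\Cp}(n_1)]\cong[V_2/\Gl_{\Cp}(n_2)]$ is the fibre product $V_1\times_{\SS'}V_2$ (with $\SS'$ the common quotient), which is simultaneously a $\Gl_{\Cp}(n_2)$-torsor over $V_1$ and a $\Gl_{\Cp}(n_1)$-torsor over $V_2$; specialness then gives $[\Gl_{\Cp}(n_2)][V_1]=[\Gl_{\Cp}(n_1)][V_2]$ directly, without any formula relating classes of different general linear groups. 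Second, the step you flag as delicate --- that relation (\ref{rel2}) forces $[V]=[\Gl_{\Cp}(n)]\cdot[[V/\Gl_{\Cp}(n)]]$ in the stacky group --- is most cleanly handled not by stratifying the base to trivialise the torsor, but by forming the associated rank-$n^2$ vector bundle $V\times^{\Gl_{\Cp}(n)}\Mat_{n\times n}(\Cp)$ over $[V/\Gl_{\Cp}(n)]$, applying (\ref{rel2}) to it, and then excising the determinantal strata; this avoids any appeal to local triviality over a stacky base.
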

For a morphism $h\colon \SS\rightarrow \TT$ of locally finite type Artin stacks we define 
\[
h^*\colon\KK^G(\Sta/\TT)\rightarrow \KK^G(\Sta/\SS)
\]
via the pullback.  If $h$ is representable there is an equality 
\[
h^*=\Psi\circ(h^*)|_{\Var}\circ\Psi^{-1}
\]
where $(h^*)|_{\Var}$ is the $\KK(\Var/\Spec(\Cp))[[\Gl_{\Cp}(n)]^{-1},n\in\NN]$-linear extension of the restriction of $h^*$ to a map 
\[
\KK^G(\Var/\TT)\rightarrow \KK^G(\Var/\SS).
\]
We define 
\[
\int_h\colon \KK^G(\Var/\SS)\rightarrow \KK^G(\Var/\TT)
\]
via composition with $h$, if $h$ is of finite type.  For $j\colon \SS'\hookrightarrow \SS$ an inclusion of a finite type substack we write $\int_{\SS'}:=\int_{h}\circ j^*$, where $h\colon\SS'\rightarrow \Spec(\Cp)$ is the structure morphism.
\medbreak
We will briefly recall the framework of \cite{DM11a}.  Let $(\SS,\epsilon,0)$ be a monoid in the category of Artin stacks, locally of finite type, with $\epsilon$ of finite type.  We wish to define a ``naive'' Grothendieck pre $\lambda$-ring of motives over $\SS$, where such motives are to carry a monodromy action.  When it comes to defining the pre $\lambda$-ring operations, it turns out to be most instructive to consider such motives via their associated mapping tori.  For this reason, we will be interested in the group $\KK^{\GG_m,n}(\Sta/\AA_{\SS}^1)$, the naive Grothendieck group of $\GG_m$-equivariant stacks over 
\[
\AA_{\SS}^1:=\AA_{\Cp}^1\times \SS.
\]
The stack $\AA_{\SS}^1$ is given the $\GG_m$-action that is trivial on $\SS$ and acts with weight $n$ on $\AA_{\Cp}^1$.  The  $\GG_m$-equivariant projection map 
\[
p\colon\AA_{\Cp}^1\times \SS\rightarrow \SS
\]
induces a map 
\[
p^*\colon\KK^{\GG_m}(\Sta/\SS)\rightarrow \KK^{\GG_m,n}(\Sta/\AA_{\SS}^1)
\]
and we denote by $\mathfrak{I}_n$ the image of this map.  We give $\SS$ the trivial $\mu_n$-action, where $\mu_n$ denotes the group of $n$-th roots of unity in $\mathbb{C}^*$.  The map $\KK^{\mu_n}(\Sta/\SS)\rightarrow \KK^{\GG_m,n}(\Sta/\AA_{\SS}^1)/\mathfrak{I}_n$ given by 
\[
[Y\xrightarrow{f} \SS]\mapsto [Y\times_{\mu_n} \GG_m\xrightarrow{(y,z)\mapsto (z^n,f(y))}  \AA_{\Cp}^1\times \SS]
\]
is an isomorphism.  For each $a\geq 1$ there is a natural morphism $\mu_{an}\rightarrow \mu_n$ sending $z$ to $z^a$, and this induces an inclusion $\KK^{\mu_n}(\Sta/ \SS)\rightarrow \KK^{\mu_{an}}(\Sta/\SS)$, and we define $\KK^{\muu}(\Sta/\SS)[\LL^{-1/2}]$ to be the group obtained by taking the direct limit of these inclusions, and then adding a formal square root to the inverse of $\LL$.  
\begin{defn}
Given a ring $R$, always assumed to be commutative, a pre $\lambda$-ring structure on $R$ is given by a map $\sigma\colon R\rightarrow R[[T]]$ staisfying
\begin{itemize}
\item
$\sigma(0)=1$
\item
$\sigma(a)=1+aT \mbox{ modulo }T^2\cdot R[[T]]$
\item
$\sigma(a+b)=\sigma(a)\sigma(b)$.
\end{itemize}
One defines the operations $\sigma^n(r)$ via $\sigma(r)=\sum_{i\geq 0} \sigma^i(r)T^i$, and we define $\Sym(r)=\sum_{i\geq 0}\sigma^i(r)$ when this infinite sum exists.\footnote{This will be the case for $r\in F^1R$ if $R$ is a complete filtered ring with filtration $F^\ast R$ such that $\sigma^i(F^j R)\subset F^{i\cdot j}R$ for all $i,j\in \NN$.}  Finally, if $R$ is a pre $\lambda$-ring we define a pre $\lambda$-ring structure on $R[[X]]$ by setting $\sigma^n(r\cdot X^i):=\sigma^n(r)\cdot X^{i\cdot n}$, extending to polynomials in $X$ by the equation $\sigma(a+b)=\sigma(a)\sigma(b)$, and completing with respect to the ideal generated by $X$.
\end{defn}
We always assume that $\sigma(1)=(1-T)^{-1}$ --- in other words we always pick $1$ to be a line element.  Using the above notation, we have $\Sym(\sum_{i\geq 1}a_i\cdot X^i)=\sum_{\pi}\prod_i(\sigma^{\pi(i)}(a_i))X^{i\pi(i )}$, where the sum is over all partitions $\pi$, and we denote by $\pi(i)$ the number of parts of $\pi$ of size $i$.
\begin{prop}\cite[Lem.4.1]{DM11a}
\label{lambdamot}
Let $(\SS,\epsilon,0)$ be a commutative monoid in the category of locally finite type schemes over $\Cp$ with $\epsilon$ of finite type.  Consider the map 
\begin{align*}
+\colon &\AA_{\SS}^1\times \AA_{\SS}^1\rightarrow \AA_{\SS}^1
\\
&((z_1,x_1),(z_2,x_2))\mapsto (z_1+z_2,\epsilon(x_1,x_2))
\end{align*}
making $\AA_{\SS}^1$ into a commutative monoid.  The Abelian group $\KK^{\GG_m,n}(\Var/\AA^1_{\SS})$ has the structure of a pre $\lambda$-ring, if we set 
\[
[X_1\xrightarrow{f_1}\AA^1_{\SS}]\cdot [X_2\xrightarrow{f_2}\AA^1_{\SS}]=[X_1\times X_2\xrightarrow{f_1\times f_2}\AA_{\SS}^1\times\AA_{\SS}^1 \xrightarrow{+}\AA_{\SS}^1]
\]
and
\[
\sigma^n([X\xrightarrow{f} \AA^1_{\SS}])=[\Sym^n X\xrightarrow{\Sym^n f} \Sym^n \AA_{\SS}^1\xrightarrow{+}\AA_{\SS}^1],
\]
for varieties $X,X_1,X_2$.  Furthermore, this induces a pre $\lambda$-ring structure on the quotient $\KK^{\mu_n}(\Var/\SS)$, which is preserved by the embeddings $\KK^{\mu_{n}}(\Var/\SS)\rightarrow \KK^{\mu_{an}}(\Var/\SS)$, giving rise to a pre $\lambda$-ring structure on $\KK^{\muu}(\Var/\SS)$.
\end{prop}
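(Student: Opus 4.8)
The plan is to establish the pre $\lambda$-ring axioms directly on the group $\KK^{\GG_m,n}(\Var/\AA^1_\SS)$, using the standard behaviour of symmetric products under stratification, and then to transport the structure along the isomorphism $\KK^{\GG_m,n}(\Var/\AA^1_\SS)/\mathfrak{I}_n\cong\KK^{\mu_n}(\Var/\SS)$ (the analogue for varieties of the isomorphism recalled just above, given by the same formula) and finally into the colimit defining $\KK^{\muu}(\Var/\SS)$.

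First I would check that the product and the operations $\sigma^m$ are well defined. Both send finite-type $\GG_m$-equivariant morphisms to finite-type $\GG_m$-equivariant morphisms — with $\GG_m$ acting diagonally on products, respectively on symmetric products — and the structure map stays equivariant because $+$ scales its two arguments with the common weight $n$. For the product, the scissor relation is immediate from $(Z\times X_2)\sqcup((X_1\setminus Z)\times X_2)=X_1\times X_2$; relation (\ref{rel2}) is respected after stratifying the base of a rank-$r$ bundle by configuration type, since each stratum of $\Sym^m X'\to\Sym^m X$ is then a rank-$rm$ vector bundle (its transition maps permute coordinate blocks, hence are linear), which reduces the claim to the identity $[\Sym^m\AA^r]=\LL^{rm}$; and relation (\ref{rel1}) is immediate when $\SS$ is of finite type and otherwise goes through as in \cite{DM11}. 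For $\sigma^m$ the key input is the characteristic-zero decomposition into locally closed pieces
\[
\Sym^m X=\bigsqcup_{a+b=m}\Sym^a Z\times\Sym^b U,\qquad U=X\setminus Z,
\]
valid after first stratifying $X$ into quasi-projective pieces so that symmetric products are varieties, under which the structure morphism $\Sym^m X\to\Sym^m\AA^1_\SS\xrightarrow{+}\AA^1_\SS$ restricts on the stratum $\Sym^a Z\times\Sym^b U$ to exactly the morphism representing $\sigma^a([Z\to\AA^1_\SS])\cdot\sigma^b([U\to\AA^1_\SS])$. Read as an identity of classes, this gives simultaneously that $\sigma^m$ respects the scissor relation and that $\sigma(a+b)=\sigma(a)\sigma(b)$ for classes $a,b$ of varieties; since $\KK^{\GG_m,n}(\Var/\AA^1_\SS)$ is generated as a group by such effective classes and each $\sigma(a)$ is a unit with constant term $1$ in the power series ring, $\sigma$ extends uniquely to a homomorphism from the additive group to the multiplicative group of power series with constant term $1$, so $\sigma(a+b)=\sigma(a)\sigma(b)$ holds in general. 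The remaining axioms are clear: $\Sym^0$ of any variety is the point mapping to $0\in\AA^1_\SS$, which is the unit of the ring, and $\Sym^m\emptyset=\emptyset$ for $m\geq1$, so $\sigma(0)=1$; and $\Sym^1 X=X$, so $\sigma(a)\equiv1+aT\pmod{T^2}$. Commutativity and associativity of the product, and the fact that $[\mathrm{pt}\xrightarrow{0}\AA^1_\SS]$ is its unit, follow from the corresponding properties of $(\SS,\epsilon,0)$ and of addition on $\AA^1$. Hence $\KK^{\GG_m,n}(\Var/\AA^1_\SS)$ is a pre $\lambda$-ring.

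Next I would show $\mathfrak{I}_n$ is a $\lambda$-ideal, so that the structure passes to the quotient identified with $\KK^{\mu_n}(\Var/\SS)$. That $\mathfrak{I}_n$ is an ideal follows from the $\GG_m$-equivariant automorphism $z\mapsto z-f_1(x)$ of $(Y\times\AA^1)\times X$ — equivariant because $z$ and the $\AA^1$-component $f_1$ of the structure map of $X$ both have weight $n$ — which carries the convolution product of $p^*[Y\xrightarrow{g}\SS]$ with an arbitrary $[X\xrightarrow{(f_1,f_2)}\AA^1_\SS]$ onto $p^*[Y\times X\xrightarrow{\epsilon\circ(g\times f_2)}\SS]$. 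That $\sigma^m(\mathfrak{I}_n)\subseteq\mathfrak{I}_n$ is the point that genuinely uses the weight-$n$ action: writing $p^*[Y\xrightarrow{g}\SS]=[Y\times\AA^1\xrightarrow{(y,z)\mapsto(z,g(y))}\AA^1_\SS]$, the additive group $\AA^1$ acts on $\Sym^m(Y\times\AA^1)$ by translating all the $\AA^1$-coordinates simultaneously, this action is normalised by the weight-$n$ $\GG_m$-action, and sending a configuration to its barycentre splits $\Sym^m(Y\times\AA^1)$ $\GG_m$-equivariantly as $P_0\times\AA^1$ — with $\GG_m$ acting with weight $n$ on the $\AA^1$-factor and $P_0$ the fibre over $0$ of the $\AA^1$-component of the structure morphism — under which (up to rescaling the $\AA^1$-factor) that structure morphism becomes $(p,z)\mapsto(z,h(p))$ for $h\colon P_0\to\SS$ induced by $\Sym^m g$ followed by iterated $\epsilon$; hence $\sigma^m(p^*[Y\xrightarrow{g}\SS])=p^*[P_0\xrightarrow{h}\SS]\in\mathfrak{I}_n$. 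As every element of $\mathfrak{I}_n$ is a difference of effective classes of this shape, multiplicativity of $\sigma$ together with the ideal property give $\sigma(c)\in1+T\cdot\mathfrak{I}_n[[T]]$ for $c\in\mathfrak{I}_n$, which is exactly what is needed for $\sigma$ to descend to the quotient ring; transporting along the isomorphism equips $\KK^{\mu_n}(\Var/\SS)$ with a pre $\lambda$-ring structure. Finally, the embeddings $\KK^{\mu_n}(\Var/\SS)\hookrightarrow\KK^{\mu_{an}}(\Var/\SS)$ correspond on the $\AA^1_\SS$-models to pullback along the $\GG_m$-equivariant maps raising the $\AA^1$-coordinate to the $a$-th power; such a pullback is a ring homomorphism and commutes with the formation of symmetric products, hence intertwines the $\sigma^m$, so the pre $\lambda$-ring structures are compatible along the embeddings and pass to the colimit.

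I expect the genuinely delicate step to be the inclusion $\sigma^m(\mathfrak{I}_n)\subseteq\mathfrak{I}_n$, i.e.\ the barycentre argument, since that is the only place where one must use that $\GG_m$ acts with a single nonzero weight on the $\AA^1$-direction — rather than merely that $\AA^1_\SS$ is a split monoid extension of $\SS$ — and where the interaction between the linear $\GG_m$-action and the formation of symmetric products has to be pinned down. Everything else reduces to bookkeeping around the standard stratification of symmetric products and the identity $[\Sym^m\AA^r]=\LL^{rm}$.
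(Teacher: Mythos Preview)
The paper does not actually prove this proposition; it is stated as part of the framework recalled from \cite{DM11}, so there is no in-paper argument to compare against. Your approach is the natural one, and the step you correctly single out as delicate --- the barycentre splitting showing $\sigma^m(\mathfrak{I}_n)\subset\mathfrak{I}_n$ --- is handled well and is indeed the crux.

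Two points should be tightened. First, your check of relation (\ref{rel2}) asserts that over each configuration stratum of $\Sym^m X$ the map $\Sym^m X'\to\Sym^m X$ is a rank-$rm$ vector bundle, but this is false: over a stratum where some point has multiplicity $k>1$ the fibre contains a factor $\Sym^k(\Cp^r)$, which is not an affine space for $r\geq 2$ (although its class is $\LL^{kr}$, as you note). You therefore cannot invoke relation (\ref{rel2}) on the nose. One fix is to first stratify $X$ equivariantly so that $X'\to X$ is trivial and then argue, stratum by stratum in $\Sym^m X$, that the fibration $\Sym^m(X\times\AA^r)\to\Sym^m X$ is Zariski-locally trivial with fibre of class $\LL^{rm}$; alternatively, observe that for $\GG_m$-equivariant varieties relation (\ref{rel2}) already follows from the scissor relation, so only the latter needs to be checked. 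Second, your description of the transition maps $\KK^{\GG_m,n}(\Var/\AA^1_\SS)\to\KK^{\GG_m,an}(\Var/\AA^1_\SS)$ as ``pullback along $z\mapsto z^a$'' is not right: the $\GG_m$-equivariant map $z\mapsto z^a$ goes from the weight-$n$ line to the weight-$an$ line, so pullback runs the wrong way. The correct description is that one keeps the same $[X\to\AA^1_\SS]$ but restricts the $\GG_m$-action on $X$ along the $a$-th power homomorphism $\GG_m\to\GG_m$ (equivalently, one composes the structure map with $z\mapsto z^a$ while leaving the action untouched). This operation manifestly commutes with forming symmetric products, so your conclusion about compatibility of the $\sigma^m$ with the embeddings is unaffected.
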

\begin{rem}
\label{stinclusion}
There is a unique extension of this pre $\lambda$-ring structure to $\KK^{\muu}(\Sta/\SS)[\LL^{-1/2}]$, using Proposition \ref{localis}.  This we may describe as follows.  Given an element in 
\[
\KK^{\muu}(\Var/\SS)[\LL^{-1/2},[\Gl_{\Cp}(n)]^{-1} , n\in\NN]
\]
one obtains a non-unique element $P$ of $\KK^{\muu}(\Var/\SS)[[u]][u^{-1}]$ after substituting instances of $[1-\LL^n]^{-t}$ out for their power series expansions and then substituting\footnote{There is a potentially confusing choice of sign here, especially since either choice of sign gives a formal square root of $\LL$.  We justify our choice by noting that $\LL$ is supposed to be the motive of $\Ho_c(\mathbb{A}^1,\mathbb{Q})$, the tensor square root of which has odd cohomological degree.} $\LL^{1/2}\mapsto -u$.  It is not hard to verify that the set of formal power series obtained in this way is closed under taking $\sigma^i$ for each $i$ if we extend $\sigma^i$ to the ring of Laurent series by $\sigma^i(au^j)=\sigma^i(a)u^{i\cdot j}$.  One may then take $\sigma^i(P)$, followed by the substitution $u\mapsto -\LL^{1/2}$, to arrive at a (unique) element of $\KK^{\muu}(\Var/\SS)[\LL^{-1/2},[\Gl_{\Cp}(n)]^{-1},n\in\NN]$.  See \cite{DM11a} for details.
\end{rem}
\begin{defn}
A power structure on a ring $R$ is a map $\left(1+T\cdot R[[T]]\right)\times R\rightarrow \left(1+T\cdot R[[T]]\right)$, written $(A(T),m)\mapsto A(T)^m$, satisfying
\begin{itemize}
\item
$A(T)^0=1$, 
\item
$A(T)^1=A(T)$, 
\item
$(A(T)\cdot B(T))^m=A(T)^m\cdot B(T)^m$, 
\item
$A(T)^{m+n}=A(T)^m\cdot A(T)^n$, $A(T)^{m\cdot n}=(A(T)^m)^n$, 
\item
$(1+T)^m$ is equal to $1+m\cdot T$ modulo $T^2\cdot R[[T]]$, 
\item
$A(T^a)^m=A(T)^m_{T\mapsto T^a}$.  
\end{itemize}
We assume all power structures are continuous with respect to the $T$-adic topology on $R[[T]]$.
\end{defn}
Given a power series $A(T)\in 1+T\cdot R[[T]]$ with $R$ a pre $\lambda$-ring, we may write $A(T)$ uniquely as an expression
\begin{equation}
\label{AExp}
A(T)=\Sym\left(\sum_{n\geq 1}a_nT^n\right).
\end{equation}
It follows that there is a one to one correspondence between continuous power structures and pre $\lambda$-ring structures: given a pre $\lambda$-ring structure we write $A(T)^m=\Sym(\sum_{n\geq 1}ma_n T^n)$, with $a_n$ defined by (\ref{AExp}), and given a power structure on $R$ we may write $\sigma(m)=(1-T)^{-m}$.  For $R$ a ring, and $R'$ a quotient ring of $R$, power structures on $R$ descending to power structures on $R'$ are exactly the power structures such that the associated pre $\lambda$-ring structure descends to $R'$.
\begin{prop}
\label{powerstruc}
The power structure on $\KK^{\GG_m,n}(\Var/\AA_\SS^1)$ inducing the pre $\lambda$-ring structure of Proposition \ref{lambdamot} is defined by
\begin{align}
\label{powerdef}
\left(\sum_{n\geq 0} [A_n\rightarrow \AA^1_{\SS}]\cdot T^n\right)^{[B\xrightarrow{g} \AA_{\SS}^1]}:=\sum_{\pi}\left[\Big(\prod_i (B^{\pi_i}\times A_i^{\pi_i})/S_{\pi_i}\Big)\setminus \Delta \xrightarrow{+\,\circ\,\prod_i g^{\pi_i}\times f_i^{\pi_i}}\AA_{\SS}^1\right]\cdot T^{\sum_i i\pi_i},
\end{align}
where the sum is over all functions $\pi\colon \mathbb{N} \to \mathbb{N}$ with finite support and $\Delta$ is the preimage of the \textit{big} diagonal in $\prod_i B^{\pi_i}/S_{\pi_i}$ with respect to the obvious projection. This power structure descends to $\KK^{\mu_n}(\Var/\SS)$ and is preserved by the embeddings $\KK^{\GG_m,n}(\Var/\AA^1_{\SS})\rightarrow \KK^{\GG_m,an}(\Var/\AA^1_{\SS})$ and induces a power structure on $\KK^{\muu}(\Var/\SS)$.
\end{prop}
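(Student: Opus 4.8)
The plan is to verify directly that (\ref{powerdef}) defines a power structure on $R:=\KK^{\GG_m,n}(\Var/\AA^1_\SS)$, and then to identify it with the pre $\lambda$-ring structure of Proposition \ref{lambdamot} via the bijection between continuous power structures and pre $\lambda$-ring structures recalled above, which will simultaneously yield the descent statements. The first task is to check that the right-hand side of (\ref{powerdef}) depends only on the classes $[A_i\rightarrow\AA^1_\SS]$ and $[B\xrightarrow{g}\AA^1_\SS]$ in $R$ (relation (\ref{rel1}) being vacuous here, as $\AA^1_\SS$ is of finite type). A locally closed decomposition $A_i=A_i'\sqcup A_i''$ of one coefficient variety induces a locally closed decomposition of each of the varieties $\big(\prod_j(B^{\pi_j}\times A_j^{\pi_j})/S_{\pi_j}\big)\setminus\Delta$ compatible with the maps to $\AA^1_\SS$, giving additivity in the $A_i$; a locally closed decomposition $B=B'\sqcup B''$ of the exponent amounts to sorting each configuration into its part supported on $B'$ and its part supported on $B''$ — stratifying the above varieties accordingly — which simultaneously proves the axiom $A(T)^{m+m'}=A(T)^mA(T)^{m'}$. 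Compatibility with relation (\ref{rel2}) holds because the configuration-type varieties built on the total space of a $\GG_m$-equivariant rank-$r$ vector bundle $B'\rightarrow B$ fibre over those built on $B$ with fibres affine spaces of the expected dimension. Throughout, $\GG_m$-equivariance is automatic as the construction is functorial and $+$ is $\GG_m$-equivariant for the weight-$n$ action on $\AA^1_\SS$, while the quotients $(\cdots)/S_{\pi_j}$ exist as varieties, and inherit the required affine-neighbourhood property, once the big diagonal $\Delta$ has been removed.

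The remaining power-structure axioms $A(T)^0=1$, $A(T)^1=A(T)$, $(A(T)B(T))^m=A(T)^mB(T)^m$, $A(T)^{mm'}=(A(T)^m)^{m'}$, $(1+T)^m\equiv 1+mT\pmod{T^2}$ and $A(T^a)^m=A(T)^m_{T\mapsto T^a}$ are established just as in the absolute case treated in \cite{GLM06}: each reduces to an explicit $\GG_m$-equivariant isomorphism of configuration-type varieties, the only extra ingredient being that the two composite maps to $\AA^1_\SS$ agree, which holds because $+$ makes $\AA^1_\SS$ into an associative and commutative monoid. To identify the resulting power structure with the pre $\lambda$-ring structure of Proposition \ref{lambdamot} it then suffices, by the correspondence of power structures with pre $\lambda$-ring structures, to compare the associated operations $\sigma^k(m)=[T^k]\big((1-T)^{-m}\big)$ on generators $m=[X\xrightarrow{f}\AA^1_\SS]$. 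With all $A_i$ equal to $1\in R$, the $T^k$-coefficient of $(1-T)^{-[X\xrightarrow{f}\AA^1_\SS]}$ read off from (\ref{powerdef}) is the class of the disjoint union over partitions $\pi$ with $\sum_i i\pi_i=k$ of $\big(\prod_i X^{\pi_i}/S_{\pi_i}\big)\setminus\Delta$; recording an effective degree-$k$ zero-cycle on $X$ by its multiplicity partition identifies this disjoint union with $\Sym^kX$, and under this identification the map to $\AA^1_\SS$ becomes $+\circ\Sym^kf$ (each multiplicity-$i$ point contributing its $f$-value with weight $i$). Hence $\sigma^k([X\xrightarrow{f}\AA^1_\SS])=[\Sym^kX\xrightarrow{+\circ\Sym^kf}\AA^1_\SS]$, which is exactly the operation defined in Proposition \ref{lambdamot}; the additivity $\sigma^k(a+b)=\sum_{i+j=k}\sigma^i(a)\sigma^j(b)$ corresponds to $\Sym^k(X\sqcup Y)=\coprod_{i+j=k}\Sym^iX\times\Sym^jY$, which is part of the content of Proposition \ref{lambdamot} and is covered by the previous paragraph. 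Since a power structure and its associated pre $\lambda$-ring structure determine one another, (\ref{powerdef}) is the power structure inducing the structure of Proposition \ref{lambdamot}.

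For the descent statements, recall that $\KK^{\mu_n}(\Var/\SS)$ is the quotient of $R$ by $\mathfrak{I}_n$, the image of $p^*$, and that a power structure on a ring descends to a quotient ring precisely when its associated pre $\lambda$-ring structure does. Proposition \ref{lambdamot} already asserts that the pre $\lambda$-ring structure descends to $\KK^{\mu_n}(\Var/\SS)$ and is preserved by the embeddings $\KK^{\mu_n}(\Var/\SS)\hookrightarrow\KK^{\mu_{an}}(\Var/\SS)$; therefore so is the power structure (\ref{powerdef}), and taking the limit over these embeddings equips $\KK^{\muu}(\Var/\SS)$ with the asserted power structure. One can also see this directly: on configurations, formula (\ref{powerdef}) visibly carries $\mathfrak{I}_n$ into itself and is compatible with the transition maps $\mu_{an}\rightarrow\mu_n$, $z\mapsto z^a$.

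The main obstacle is the verification of the multiplicative axioms $(A(T)B(T))^m=A(T)^mB(T)^m$ and, above all, the exponential formula $A(T)^{mm'}=(A(T)^m)^{m'}$ at the level of the explicit varieties, i.e. constructing $\GG_m$-equivariant isomorphisms over $\AA^1_\SS$ between ``configurations of configurations'' and ``configurations'' while keeping track of the removed big diagonals and checking that both composite maps to $\AA^1_\SS$ coincide. The most economical route is to import these identities from \cite{GLM06} for $\KK(\Var/\Spec(\Cp))$ and to observe that all the bijections constructed there are $\GG_m$-equivariant and compatible with the monoid operation $+$ on $\AA^1_\SS$; with that done, the only genuinely new input is the compatibility with relation (\ref{rel2}) and the descent to $\muu$-equivariant classes, both routine.
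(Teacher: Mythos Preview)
Your argument follows exactly the paper's strategy: verify that \eqref{powerdef} is a power structure inducing the pre $\lambda$-ring structure of Proposition~\ref{lambdamot} by reducing to the Gusein-Zade--Luengo--Melle-Hern\'andez result (the paper cites \cite{GLM04} rather than \cite{GLM06}), and deduce the descent and compatibility statements from the correspondence between power structures and pre $\lambda$-ring structures together with Proposition~\ref{lambdamot}. The paper's own proof is essentially your final two paragraphs compressed into two sentences; your additional discussion of well-definedness and the explicit identification $\sigma^k([X\xrightarrow{f}\AA^1_\SS])=[\Sym^kX]$ is extra detail the paper omits.
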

Given $\pi$ one should think of $\left(\prod_i (B^{\pi_i}\times A_i^{\pi_i})/S_{\pi_i}\right)\setminus \Delta$ as being the configuration space of pairs $(K,\phi)$, where $K$ is a finite subset of $B$ of cardinality $\sum_{i}\pi_i$ and $\phi\colon K \longrightarrow \coprod_i A_i$ is a map sending $\pi_i$ points to $A_i$.
\begin{proof}
The given power structure on $\KK^{\GG_m,n}(\Var/\AA_\SS^1)$ can be checked to be a power structure inducing the given pre $\lambda$-ring structure as in \cite{GLM04}.  The statements regarding the preservation of power structures under embeddings and their descent to the quotient are true due to the correspondence between power structures and pre $\lambda$-rings, and the truth of the corresponding statements on the side of pre $\lambda$-rings --- this is just Proposition \ref{lambdamot} again.
\end{proof}
\begin{rem} This power structure extends to $\KK^{\GG_m,n}(\Sta/\AA^1_\SS)$ inducing a power structure on $\KK^{\muu}(\Sta/\SS)$ which corresponds to the pre $\lambda$-ring structure discussed in Remark \ref{stinclusion}. In order to do this, we have to replace $[B\xrightarrow g \AA^1_\SS]$ and $[A_i\xrightarrow{f_i} \AA^1_\SS]$ with  formal power series $B=\sum_j [B_j\xrightarrow{g_j}\AA^1_\SS] u^j$ and $A_{i}=\sum_k [A_{ik}\xrightarrow{f_{ik}}\AA^1_\SS]u^k$  in $u$ with coefficients in $\KK^{\GG_m,n}(\Var/\AA^1_{\SS})$. To get the correct formula, we should think of these series as being the motives of $\coprod_j B_j \longrightarrow \AA^1_\SS$ resp.\ $\coprod_k A_{ik} \longrightarrow \AA^1_\SS$. The configuration spaces decompose accordingly. If $\pi_{ijk}$ denotes the number of points in $B_j$ mapped into $A_{ik}$, then the correct form of the right hand side of the formula in Equation \ref{powerdef} is
\[ \sum_{\pi}\left[\Big(\prod_{i,j,k} (B^{\pi_{ijk}}_j\times A_{ik}^{\pi_{ijk}})/S_{\pi_{ijk}}\Big)\setminus \Delta \xrightarrow{+\,\circ\,\prod_{i,j,k} g_j^{\pi_{ijk}}\times f_{ik}^{\pi_{ijk}}}\AA_{\SS}^1\right]\cdot u^{\sum_{i,j,k}(j+k)\pi_{ijk}}T^{\sum_{i,j,k} i\pi_{ijk}},\] 
where the sum is now taken over all functions $\pi\colon \mathbb{N}^3\to \mathbb{N}$ with compact support and $\Delta$ denotes the preimage of the big diagonal in $\prod_{i,j,k} B_j^{\pi_{ijk}}/S_{\pi_{ijk}}$ with respect to the obvious projection.  
\end{rem}
\subsection{Motivic Hall algebras}
\label{Hallalg}
We recall the definition of the motivic Hall algebra for the stack of finite-dimensional $A_{Q,W}$-modules, for $A_{Q,W}$ a Jacobi algebra as in Definition \ref{jacdef}.  For $Q$ a finite quiver and $\nn\in\NN^{\verts(Q)}$ a dimension vector, we define the moduli stack
\[
\YY_{Q,\nn}:=\prod_{a\in \edge(Q)}\Hom\left(\Cp^{\nn(t(a))},\Cp^{\nn(s(a))}\right)/\prod_{i\in \verts(Q)}\Gl_{\Cp}(\nn(i)),
\]
where $s(a)$ is the source of the arrow $a$ and $t(a)$ is the target\footnote{In algebraic contexts (as in Section \ref{motdt}) it is generally better to work with right modules, which is why our homomorphisms go from the vector space labelled by the target of the arrow to the vector space labelled by the source.}, and $\Gl_{\Cp}(\nn(i))$ acts by change of basis of $\Cp^{\nn(i)}$.  We define 
\[
\YY_Q:=\coprod_{\nn\in\NN^{\verts(Q)}}\YY_{Q,\nn}.  
\]
If $W\in \mathbb{C} Q/[\mathbb{C} Q,\mathbb{C} Q]$ is a superpotential we define $\XX_{Q,W,\nn}$ to be the Zariski closed subscheme of $\YY_{Q,\nn}$ cut out by the matrix valued equations given by the noncommutative partial differentials (as defined by Equation (\ref{ncder}) and the line following it) of $W$.  We define
\[
\XX_{Q,W}:=\coprod_{\nn\in\NN^{\verts(Q)}}\XX_{Q,W,\nn},
\]
the moduli stack of finite-dimensional modules for $A_{Q,W}$, the Jacobi algebra for $(Q,W)$.  Denote by \begin{align}
\label{nilpd}\XX_{Q,W}^{\nilp}\subset &\XX_{Q,W}\\
\YY_Q^{\nilp}\subset &\YY_Q\nonumber
\end{align}
the stacks\footnote{Note that these stacks do not represent the functor sending a ring $A$ to the groupoid of nilpotent $A_{Q,W}\otimes A$ or $\mathbb{C} Q\otimes A$-modules, flat over $A$.} of finite-dimensional nilpotent right modules for $A_{Q,W}$ and $\mathbb{C} Q$ respectively cut out by the equations $\tr(\rho(c))=0$ for all cyclic paths $c$.
\medbreak
The Abelian groups $\KK(\Sta/\YY_{Q})$, $\KK(\Sta/\YY^{\nilp}_Q)$, $\KK(\Sta/\XX_{Q,W})$ and $\KK(\Sta/\XX^{\nilp}_{Q,W})$ carry Hall algebra products for which the comprehensive reference is the series of papers by Dominic Joyce (see \cite{Jo06a,Jo06b,Jo07a,Jo07b} or also Bridgeland's summary \cite{TB10}).  For completeness we recall the definition.  
\medbreak
We fix our attention on $\KK(\Sta/\YY_Q)$ for now.  Let $[X_i\xrightarrow{f_i}\YY_Q]$ be two effective classes, for $i=0,1$.  The ring $\KK(\Sta/\YY_Q)$ is isomorphic to the inverse limit of the quotients 
\[
\mathfrak{Q}_t:=\KK\left(\Sta/\YY_Q\right)/\KK\left(\Sta/\coprod_{\nn\in\NN^{\verts(Q)}\mbox{ \scriptsize such that }|\nn|\geq t}\YY_{Q,\nn}\right),
\]
by convention (\ref{con2}).  Note that each stack 
\[
\coprod_{\nn\in\NN^{\verts(Q)}\mbox{ \scriptsize such that }|\nn|< t}\YY_{Q,\nn}
\]
is of finite type.  Since the product is linear, we may assume that each morphism $f_i$ factors through an inclusion $\YY_{Q,\nn_i}\hookrightarrow \YY_Q$.  For $a,b\in\NN$, denote by $\Gl_{\Cp}(a,b)$ the Borel subgroup of $\Gl_{\Cp}(a+b)$ preserving the standard flag $0=\Cp^0\subset \Cp^{a}\subset \Cp^{a+b}$.  Let 
\[
\VY_{Q,\nn_0,\nn_1}\subset \VY_{Q,\nn_0+\nn_1}=\prod_{a\in \edge(Q)}\Hom\left(\Cp^{\nn_{0}(t(a))+\nn_1(t(a))},\Cp^{\nn_0(s(a))+\nn_1(s(a))}\right)
\]
be the subspace of points corresponding to linear maps preserving the standard flag 
\[
0=\bigoplus_{i\in \verts(Q)}\Cp^0\subset \bigoplus_{i\in \verts(Q)}\Cp^{\nn_0(i)}\subset \bigoplus_{i\in \verts(Q)}\Cp^{\nn_0(i)+\nn_1(i)},
\]
and let 
\[
\YY_{Q,\nn_0,\nn_1}=\VY_{Q,\nn_0,\nn_1}/\prod_{i\in \verts(Q)}\Gl_{\Cp}(\nn_0(i),\nn_1(i))
\]
be the stack-theoretic quotient.  Then there are three natural morphisms of stacks
\begin{align*}
\pi_1\colon \YY_{Q,\nn_0,\nn_1}\rightarrow &\YY_{Q,\nn_0} \\
\pi_2\colon \YY_{Q,\nn_0,\nn_1}\rightarrow &\YY_{Q,\nn_0+\nn_1}\\
\pi_3\colon \YY_{Q,\nn_0,\nn_1}\rightarrow &\YY_{Q,\nn_1},
\end{align*}
and we define $[X_0\xrightarrow{f_0}\YY_Q]\star[X_1\xrightarrow{f_1}\YY_Q]$ to be the composition given by the top row of the following commutative diagram
\[
\xymatrix{
X_2\ar[r]\ar[d]&\YY_{Q,\nn_0,\nn_1}\ar[d]^-{\pi_1\times \pi_3}\ar[r]^-{\pi_2}\ar[r] &\YY_{Q,\nn_0+\nn_1}\ar@{^{(}->}[r]&\YY_Q\\
X_0\times X_1\ar[r]^-{f_0\times f_1}&\YY_{Q,\nn_0}\times\YY_{Q,\nn_1},
}
\]
where the leftmost square is Cartesian.  This gives consistent well defined products on the quotients $\mathfrak{Q}_t$, and so it gives a well defined product on $\KK(\Sta/\YY_Q)$.  It is easy to see that under the Hall algebra product the group $\KK(\Sta/\YY_Q^{\nilp})$ is a subalgebra.  Similarly, we define $[X_0\xrightarrow{f_0}\XX_{Q,W}]\star_{\KS}[X_1\xrightarrow{f_1}\XX_{Q,W}]$ via the diagram
\[
\xymatrix{
X_2\ar[r]\ar[d]&\XX_{Q,W,\nn_0,\nn_1}\ar[d]^-{\pi_1\times \pi_3}\ar[r]^-{\pi_2}\ar[r] &\XX_{Q,W,\nn_0+\nn_1}\ar@{^{(}->}[r]&\XX_{Q,W}\\
X_0\times X_1\ar[r]^-{f_0\times f_1}&\XX_{Q,W,\nn_0}\times\XX_{Q,W,\nn_1}.
}
\]
\begin{rem}\label{Halldiff}
Note that the group homomorphism $[X\rightarrow \XX_{Q,W}]\mapsto [X\rightarrow \YY_Q]$ induced by the inclusion $\XX_{Q,W}\subset \YY_Q$ is not an algebra homomorphism for these products --- an extension of modules for the Jacobi algebra $A_{Q,W}$, considered as $\mathbb{C} Q$-modules, might not satisfy the relations required to be a $A_{Q,W}$-module.  It is for this reason that we use different notation to distinguish the products $\star_{\KS}$ and $\star$.
\end{rem}
\subsection{Motivic vanishing cycles}
\label{vancycles}
We present some of the ideas expanded upon in greater depth in \cite{Looi00}.  Let $X$ be a smooth scheme over $\Cp$ and let $f\colon X \rightarrow \AA^1_{\Cp}$ be a regular map.  One defines $\Lo_n(X)$, the space of arcs in $X$ of length $n$, to be the scheme representing the functor $Y\mapsto \Hom_{\Sch}(Y\times \Spec(\Cp[t]/t^{n+1}),X)$.  Via the natural inclusion $\Spec(\Cp[t]/t)\rightarrow \Spec(\Cp[t]/t^{n+1})$ there is a map of schemes 
\begin{align*}
p_n\colon \Lo_n(X)\rightarrow X.
\end{align*}
We write $\Lo_n(X)|_{X_0}=p_n^{-1}f^{-1}(0)$.  There is a natural morphism $f_*\colon \Lo_n(X)\rightarrow \Lo_n(\AA_{\Cp}^1)$ given by composition.  An arc in $\AA_{\Cp}^1$ is given by a polynomial $a_0+\ldots+a_nt^n$, and so $\Lo_n(\AA_{\Cp}^1)\cong\AA_{\Cp}^{n+1}$ and the composition of $f_*$ with the projection 
\begin{align*}
\pi\colon &\Lo_n(\AA_{\Cp}^1)\rightarrow\AA^1_{\Cp}
\\&a_0+\ldots+a_n t^n\mapsto a_n
\end{align*}
makes $\Lo_n(X)$ into a scheme over $\AA_{\Cp}^1$.  Moreover there is a $\GG_m$-action on $\Lo_n(X)$ given by rescaling the coordinate $t$ of $\Cp[t]/t^{n+1}$, and $[\Lo_n(X)|_{X_0}\xrightarrow{(\pi\circ f_*) \times p_n} \AA_{X_0}^1]\in \KK^{\GG_m,n}(\Var/\AA_{X_0}^1)$.  We consider the expression
\begin{equation*}
Z_f^{\mathrm{eq}}(T):=\sum_{n\geq 1}\LL^{-(n+1)\dim(X)/2}\cdot [\Lo_n(X)|_{X_0}\xrightarrow{(\pi\circ f_*) \times p_n} \AA_{X_0}^1]T^n
\end{equation*}
as a formal power series with coefficients in $\KK^{\muu}(\Var/ X_0)[\LL^{-1/2}]$.  In general (see \cite[Thm.2.2.1]{DL98}) it makes sense to evaluate this function at infinity, and one defines 
\[
\phi_f=-Z_f^{\mathrm{eq}}(\infty)\in \KK^{\muu}(\Var/X_0)[\LL^{-1/2}], 
\]
the motivic vanishing cycle of $f$.  This definition differs by a factor of $(-\LL^{1/2})^{\dim(X)}$ from the original definition of Denef and Loeser.  This normalisation makes the motivic weights appearing in Donaldson--Thomas theory simpler; the principle is that in Donaldson--Thomas theory and elsewhere, it is best to work with the perverse sheaf of vanishing cycles, which is obtained from the complex of sheaves $\varphi_f\mathbb{Q}_X$ by shifting by half the dimension of $X$.
\medbreak
The motivic vanishing cycle has the property that if $g\colon X_1\rightarrow X_2$ is a smooth morphism of smooth schemes, and $f\colon X_2\rightarrow \AA^{1}_{\Cp}$ is a regular function, then $\phi_{f\circ g}=\LL^{-\dim(g)/2}\cdot g^*\phi_f$.  Given an Artin stack $\mathcal{Z}$ that is a quotient stack $[Z/\Gl_{\Cp}(m)]$ for smooth connected $Z$, and $f\colon \mathcal{Z}\rightarrow \AA_{\Cp}^1$ a function, one defines\footnote{Note that, by relation (\ref{rel1}), $[\BGL_{\Cp}(m)]=[\Gl_{\Cp}(m)]^{-1}\in\KK(\Sta/\Spec(\Cp))$} 
\[
\phi_f=\LL^{m^2/2}\cdot[\BGL_{\Cp}(m)]\cdot \pi_*\phi_{f\circ\pi}\in\KK^{\muu}(\Sta/\mathcal{Z}),
\]
where $\pi\colon Z\rightarrow \mathcal{Z}$ is the projection.
\medbreak
In studying 3-dimensional Calabi--Yau categories, one is often faced with the following situation, which necessitates the use of a \textit{relative} version of motivic vanishing cycles.  Firstly, let $X$ be a finite type scheme, carrying a constructible vector bundle $V$, with a function $f\colon \Tot(V)\rightarrow \Cp$ vanishing on the zero fibre.  By constructible vector bundle, we mean that there is a finite decomposition of $X$ into locally closed subschemes $X=\coprod X_i$, and a vector bundle $V_i$ on each of the $X_i$ (we do not assume that these vector bundles are of the same rank).  By a function on such an object we mean a function on each of the $V_i$, possibly after further decomposition.  In full generality, one should consider formal functions on $V$, by which we mean a function on the formal neighbourhood of the zero section of each of the $V_i$.  We would like to define a motivic vanishing cycle for such a function.  This we do by defining $\Lo_n(V)$ to be the space of those arcs in $\Tot(V)$ that restrict to a single fibre of the projection $\pi\colon V\rightarrow X$.  More precisely, we define $\Lo_n(V)$ via the Cartesian diagram
\[
\xymatrix{
\Lo_n(V)\ar[r]\ar[d]&\Lo_n(\Tot(V))\ar[d]^{\tau_*}\\
X\ar[r]^{\beta}&\Lo_n(X)
}
\]
where $\tau_*$ is induced by the projection $\tau\colon\Tot(V)\rightarrow X$, and the map $\beta$ is the inclusion of constant arcs.  We define $\Lo_n(V)|_{X}=p_n^{-1}(X)$ as before.  Finally, define 
\begin{equation}
\label{zetaf}
Z_f^{\mathrm{eq}}(T):=\sum_{n\geq 1}\LL^{-(n+1)\rank(V)/2}\cdot [\Lo_n(V)|_{X}\xrightarrow{ (\pi\circ f_*)\times p_n} \AA_{X}^1]T^n\in \KK^{\muu}(\Var/ X)[\LL^{-1/2}].
\end{equation}
We claim that the definition 
\[
\phi_f^{\rel}:=Z_f^{\mathrm{eq}}(\infty)
\]
makes sense, in other words, that the relative zeta function (\ref{zetaf}) can be evaluated at infinity.  The claim is justified using Kontsevich's transformation formula (see \cite[Sec.3]{Looi00}) in the same way as \cite[Thm.2.2.1]{DL98}.  In a little more detail, by Hironaka's theorem, we may find an embedded resolution $Y\xrightarrow{g} \Tot(V)$ of $f^{-1}(0)$, considered as a subvariety of $\Tot(V)$, blowing up along smooth centers $H_1,\ldots ,H_n$.  I.e. we have that $(fg)^{-1}(0)$ is a normal crossings divisor.  After replacing $X$ by a Zariski open subvariety $X'\subset X$, we may assume that each projection from $H_i$ to $X$ is smooth.  Define $Y'=g^{-1}(X')$, then possibly after shrinking $X'$ further, we may assume that $(fg)^{-1}(0)$ is a smooth family of normal crossing divisors.  Now the claim (over $X'$) follows by the proof of \cite[Thm.5.4]{Looi00}, and the discussion following it.  Finally, we consider the complement $X\setminus X'$, which can be decomposed into finitely many smooth schemes $X'=\coprod X_i$ of dimension strictly less than $\dim(X)$ --- the general result follows by Noetherian induction and the cut and paste relations.


\section{Motivic Donaldson--Thomas theory}
\label{motdt}
\subsection{Three-dimensional Calabi--Yau categories}
We recall the essential ingredients of the theory of motivic Donaldson--Thomas invariants from \cite{KS}.  We will begin with the data that one feeds into this machine.  One starts with $\CC$, a 3-Calabi--Yau category. By a 3-Calabi--Yau category $\CC$ we mean a set of objects $\ob(\CC)$, between any two objects $x_i,x_j\in\ob(\CC)$ a $\ZZ$-graded vector space $\Hom_{\CC}(x_i,x_j)$, and a countable collection of operations
\[
b_{\CC,n}\colon \Hom_{\CC}(x_{n-1},x_n)[1]\otimes\ldots \otimes\Hom_{\CC}(x_0,x_1)[1]\rightarrow \Hom_{\CC}(x_0,x_n)[1]
\]
of degree 1, satisfying the condition
\[
\sum_{\alpha+\beta+\gamma=n}b_{\CC,\alpha+1+\gamma}\circ(\one^{\otimes \alpha}\otimes b_{\CC,\beta}\otimes\one^{\otimes \gamma})=0.
\]
See \cite{KLH} for a comprehensive guide to $A_{\infty}$-categories, or \cite{kaj03} for a similarly comprehensive guide to cyclic $A_{\infty}$-categories, or \cite{keller-intro} for a gentle and concise reference for most of what follows.  All these ideas are also covered in the notes \cite{KSnotes}.  The 3-Calabi--Yau condition consists of the extra data of a skewsymmetric nondegenerate bracket 
\[
\langle\bullet,\bullet\rangle_{\CC}\colon \Hom_{\CC}(x_i,x_j)[1]\otimes\Hom_{\CC}(x_j,x_i)[1]\rightarrow \Cp
\]
of degree -1, such that the functions $W_{\CC,n}:=\langle b_{\CC,n-1}(\bullet,\ldots,\bullet),\bullet\rangle$ are cyclically symmetric.  One defines 
\[
W_{\CC}(z):=\sum_{n\geq 2}W_{\CC,n}(z,\ldots,z)/n, 
\]
a formal function on $\Hom_{\CC}^1(x_i,x_i)$ for each $x_i\in\ob(\CC)$.
\medbreak
In this section we will recall the definition of a particular 3-Calabi--Yau category $\tw(\DD(Q_{-2},W_d))$, the $A_{\infty}$-\textit{category of twisted complexes} over a certain 3-Calabi--Yau category $\DD(Q_{-2},W_d)$ built out of the same data $(Q_{-2},W_d)$ as $A_{Q_{-2},W_d}$.  The category $\tw(\DD(Q_{-2},W_d))$ will be shown to be a 3-Calabi--Yau enrichment of the category $\Derbe(Y_d)$, the derived category of coherent sheaves on $Y_d$ with bounded total cohomology, set-theoretically supported on the exceptional locus $C_d\subset Y_d$, in the sense that there is a composition of equivalences of categories, beginning with the homotopy category of $\tw(\DD(Q_{-2},W_d))$:
\begin{equation}
\label{bp}
\xymatrix@C+1.3pc{
  &\Derf(\rmod\hat{A}_{Q_{-2},W_d}) \ar[r]^{\simeq} &\Dern(\rmod A_{Q_{-2},W_d}) \ar[r]_-{\simeq}^-{\substack{\mathrm{VdB's}\\\mathrm{equivalence}}}&\Derbe(Y_d)\\
  \Ho(\tw(\DD(Q_{-2},W_d)))\ar[r]_{\simeq}^-{\substack{\mathrm{Koszul}\\\mathrm{duality}}}&\Derf(\rmod\Gamma(Q_{-2},W_d))\ar[u]^{\simeq} \ar[r]^{\simeq} &\Dern(\rmod\Gamma(Q_{-2},W_d))\ar[u]^{\simeq}.
}
\end{equation}
\medbreak
The algebra $\hat{A}_{Q_{-2},W_d}$ is the Jacobi algebra defined as in Definition \ref{jacdef}, completed at the ideal generated by the arrows of $Q_{-2}$.  The category $\Derf(\rmod\hat{A}_{Q_{-2},W_d})$ is the derived category of right $\hat{A}_{Q_{-2},W_d}$-modules with finite-dimensional total cohomology, and $\Dern(\rmod A_{Q_{-2},W_d})$ is the derived category of $A_{Q_{-2},W_d}$-modules with nilpotent finite-dimensional total cohomology.  As diagram (\ref{bp}) indicates, the story starts with Koszul duality, so we start our exposition with the Koszul dual of $\DD(Q_{-2},W_d)$, which is the \textit{Ginzburg differential graded category}.
\medbreak
Given the data of a quiver with potential $(Q,W)$ in \cite{ginz} Ginzburg defines the dg category $\Gamma(Q,W)$. It is constructed as follows.  The quiver $Q$ defines a bimodule $S$ for the semisimple ring $R:=\Cp^{\verts(Q)}$, where we set
\[
\dim(e_i\cdot S\cdot e_j):=\#\mbox{ of arrows from }j\mbox{ to }i.
\]
The objects of the category $\Gamma(Q,W)$ are just the vertices of the quiver, i.e.\ $\ob(\Gamma(Q,W)):=\verts(Q)$, and for two vertices $x_i,x_j$ we put 
\[ \Hom_{\Gamma(Q,W)}(x_i,x_j)= e_j\cdot T_R\left(R[2]\oplus S^{\vee}[1] \oplus S\right) \cdot e_i, \]
where $e_i,e_j\in R$ are the idempotent elements corresponding to $x_i$ resp.\ $x_j$, and $S^{\vee}$ is the dual of $S$ in the category of $R$-bimodules. Moreover, $T_R(M)$ denotes the completion of the free unital algebra object generated by $M$ in the category of $R$-bimodules. Composition in the category $\Gamma(Q,W)$ is given by the tensor product in the category of $R$-bimodules. We define a differential $d$ of degree one on $T_R(R[2] \oplus S^{\vee}[1] \oplus S)$ satisfying the Leibniz rule and such that 
\begin{align*}
d(e_i[2])=&\sum_{a_k\colon x_i \to x_j} a_k^*a_k-\sum_{a_k\colon x_j \to x_i}a_k a_k^*\\
d(a_k^*[1])=&\partial W/\partial a_k\\
d(a_k)=&0, 
\end{align*}
where $a_k$ (resp.\ $a_k^*$) runs through a basis (resp. dual basis) of the vector space $e_jSe_i$ (resp.\ $e_iS^{\vee}e_j=(e_jSe_i)^{\vee}$). This makes $\Gamma(Q,W)$ into a dg-category and hence into an $A_{\infty}$-category.  For certain choices of $(Q,W)$, including our choice $(Q_{-2},W_d)$, the Ginzburg differential graded category $\Gamma(Q,W)$ has cohomology concentrated in degree zero.  Moreover, for any choice of $(Q,W)$, there is an isomorphism
\[
\Ho^0(\Gamma(Q,W))\cong \hat{A}_{Q,W}
\]
where $\hat{A}_{Q,W}$ is the Jacobi algebra defined as in Definition \ref{jacdef}, completed at the ideal generated by the arrows of $Q$, and considered in the usual way as a category whose objects are the idempotents $e_i$.  There is a natural equivalence of categories between finite-dimensional modules over $\hat{A}_{Q,W}$ and nilpotent finite-dimensional modules over $A_{Q,W}$.  Together, these facts provide the central commutative square of equivalences in (\ref{bp}).
\medbreak
As for $\Gamma(Q,W)$, the objects of the category $\DD(Q,W)$ are defined to be the vertices of the quiver, i.e.
\[
\ob(\DD(Q,W)):=\verts(Q).  
\]
The homomorphism spaces between these objects are graded vector spaces concentrated in degrees between zero and three.  One sets
\begin{equation}
\Hom^n_{\DD(Q,W)}(x_i,x_j):=\begin{cases} \Cp^{\delta_{ij}} &\mbox{ if }n=0\\ (e_i\cdot S\cdot e_j)^{\vee}&\mbox{ if }n=1\\(e_j\cdot S\cdot e_i) &\mbox{ if }n=2\\ (\Cp^{\vee})^{\delta_{ij}}&\mbox{ if } n=3, \end{cases}
\end{equation}
where $\delta_{ij}$ is the Kronecker delta function and $\Cp^{\vee}\cong\Cp$ is the vector dual of the one dimensional complex vector space $\Cp$.  The $A_{\infty}$ operations on this category are given by firstly setting the natural generator $1_{i}$ of $\Hom_{\DD(Q,W)}^0(x_i,x_i)$ to be a strict unit for every $i\in Q_0$.  This means that $b_2(f,1_i)=f$ and\footnote{The strange sign here is the price we pay for considering the maps $b_n\colon \Hom_{\CC}(x_{n-1},x_n)[1]\otimes\ldots \otimes\Hom_{\CC}(x_0,x_1)[1]\rightarrow \Hom_{\CC}(x_0,x_n)[1]$ instead of $m_n\colon \Hom_{\CC}(x_{n-1},x_n)\otimes\ldots \otimes\Hom_{\CC}(x_0,x_1)\rightarrow \Hom_{\CC}(x_0,x_n)$.  The payoff is that there are a lot fewer signs overall.} $b_2(1_i,g)=-g$ for all $f\in\Hom_{\DD(Q,W)}(x_i,x_j)$ and $g\in\Hom_{\DD(Q,W)}(x_j,x_i)$, and any insertion of $1_i$ into $b_n$ for any $n\geq 3$ results in the zero function.  We let $b_{\DD(Q,W),2}(\theta, z)=-\theta(z)1^*_j$ with $1^\ast_j\in \Hom^3_{\CC}(x_j,x_j)$ being the dual basis of $1_j$, and $b_{\DD(Q,W),2}(z,\theta)=\theta(z)1^*_i$ for any $\theta\in \Hom^1_{\DD(Q,W)}(x_i,x_j)$ and $z\in\Hom^2_{\DD(Q,W)}(x_j,x_i)$.  Then for degree reasons all that is left is to define the degree one operations 
\[
b_{\DD(Q,W),m}\colon\Hom^1_{\DD(Q,W)}(x_{m-1},x_m)[1]\otimes\ldots\otimes\Hom^1_{\DD(Q,W)}(x_0,x_1)[1]\rightarrow \Hom^2_{\DD(Q,W)}(x_0,x_m)[1]
\]
which are given by $W_{m+1}$, the $(m+1)$th homogeneous part of $W$, via the natural pairing
\[
(e_{m-1}\cdot S\cdot e_{m})^{\vee}\otimes\ldots\otimes (e_0\cdot S\cdot e_1)^{\vee}\otimes e_0\cdot S\cdot e_1\otimes\ldots\otimes e_{m-1}\cdot S\cdot e_m\otimes e_{m}\cdot S\cdot e_0\rightarrow e_m\cdot S\cdot e_0.
\]
Note that this definition results in the identity $W=W_{\DD(Q,W)}|_{\End^1(\bigoplus_{i\in \verts(Q)}x_i)}$.  
\medbreak
The category $\DD(Q,W)$ has a natural inner product $\Hom_{\DD(Q,W)}(x_i,x_j)[1]\otimes \Hom_{\DD(Q,W)}(x_j,x_i)[1]\rightarrow \Cp[-1]$ satisfying the cyclicity condition. 
\medbreak
We now come to the connection between $\Gamma(Q,W)$ and $\DD(Q,W)$. This is explained via Koszul duality for $A_{\infty}$-algebras, for which an excellent reference is \cite{koszul}.  Using the projection to the degree zero part of $\DD(Q,W)$, we can make $R_i:=e_iR\cong\mathbb{C}$ into a (trivial) right $\DD(Q,W)$-module, which we will denote $R_{i,\DD(Q,W)}$, and we get an object in $\rmod(\DD(Q,W)):= \Fun_\infty(\DD(Q,W)^{\op},\Vect^\ZZ_\mathbb{C})$, the dg category of right $A_{\infty}$-modules over $\DD(Q,W)$ with finite-dimensional bounded cohomology. With the help of the bar construction one can show that there are quasi-isomorphisms 
\[
\Hom_{\rmod\DD(Q,W)}(R_{i,\DD(Q,W)},R_{j,\DD(Q,W)})\simeq\Hom_{\Gamma(Q,W)}(x_j,x_i)\simeq\Hom_{\rmod\Gamma(Q,W)}(x_i,x_j),
\]
where we used the Yoneda embedding of $\Gamma(Q,W)$ into $\rmod\Gamma(Q,W)$ for the final quasi-isomorphism --- in fact if one uses the (reduced) bar construction to demonstrate the first of these quasi-isomorphisms, it is an equality.  This establishes that $\Gamma(Q,W)$ and $\DD(Q,W)$ are Koszul dual, and so by \cite[Thm.2.4]{koszul} we get similar quasi-isomorphisms after swapping them --- i.e. there is a quasi-isomorphism 
\[
\Hom_{\rmod\Gamma(Q,W)}(R_{i,\Gamma(Q,W)},R_{j,\Gamma(Q,W)})\simeq\Hom_{\DD(Q,W)}(x_i,x_j).  
\]
Hence the induced functor between homotopy categories $\RHom(\rmod R_{\Gamma(Q,W)},-)\colon\DER(\rmod\Gamma(Q,W))\rightarrow \DER_{\infty}(\rmod\DD(Q,W))$ takes $R_{i, \Gamma(Q,W)}$ to a module quasi-isomorphic to $x_i$, considered as a $\DD(Q,W)$ module, and restricting, we obtain the diagram of functors
\begin{equation}
\label{kdiag}
 \xymatrix @C=3cm { \DER(\rmod\Gamma(Q,W)) \ar[r]^{\RHom(R_{\Gamma(Q,W)},-)} & \DER_{\infty}(\rmod\DD(Q,W)) \\ 
\DER(\langle R_{i,\Gamma(Q,W)},i\in \verts(Q)\rangle_{\thick}) \ar@{^{(}->}[u] \ar[r]^{\simeq} & \DER_{\infty}(\langle x_{i}\in\rmod\DD(Q,W),i\in\verts(Q)\rangle_{\thick}) \ar@{^{(}->}[u] \\
\DER(\langle R_{i,\Gamma(Q,W)},i\in \verts(Q)\rangle_{\triang}) \ar@{^{(}->}[u]^{\simeq}_{\alpha} \ar[r]^{\simeq} & \DER_{\infty}(\langle x_{i}\in\rmod\DD(Q,W),i\in\verts(Q)\rangle_{\triang}) \ar@{^{(}->}[u]^{\simeq}_{\beta} }
\end{equation}
where for $S\subset \ob(\rmod\Gamma(Q,W))$, the category $\DER(\langle S\rangle_{\triang})$ is the full subcategory of the derived category of $\Gamma(Q,W)$-modules $M$ that are quasi-isomorphic to objects in the closure of $S$ under taking triangles and shifts, and $\DER(\langle S\rangle_{\thick})$ is defined in the same way, except we take the closure under the operation of taking retracts too.  
\medbreak
The lowest two horizontal functors in (\ref{kdiag}) are equivalences by Koszul duality for module categories \cite[Thm.5.4]{koszul}.  The inclusion $\alpha$ is a equivalence, since its source and target can both be seen to be the full subcategory of the derived category of $\Gamma(Q,W)$-modules consisting of dg modules with finite-dimensional nilpotent total cohomology.  In particular, $\DER(\langle R_{i,\Gamma(Q,W)},i\in \verts(Q)\rangle_{\triang})$ is already closed under taking retracts.  It follows that $\beta$ is an equivalence too.

\medbreak
The point of introducing the category of twisted complexes $\tw(\DD(Q,W))$ is that it is a category that is a natural 3-Calabi-Yau enrichment of $\DER_{\infty}(\langle x_{i}\in\rmod\DD(Q,W),i\in\verts(Q)\rangle_{\triang})$, which by (\ref{kdiag}) and (\ref{ncDE}) is equivalent to $\Derbe(Y_d)$ in the case $(Q,W)=(Q_{-2},W_d)$.  We refer to \cite[Sec.7]{keller-intro} for a comprehensive account of the category of twisted complexes, and here recall its main features.
\medbreak
Objects of $\tw(\DD(Q,W))$ are given by pairs $(T,\alpha)$, where $T=\bigoplus_{i=1}^n x_{a_i}[b_i] \in \rmod\DD(Q,W)$ is a finite direct sum of right $\DD(Q,W)$-modules given by integer shifts of objects $x_{a_i}\in \ob(\DD(Q,W))$ covariantly embedded via the Yoneda embedding, and $\alpha$ is an element of 
\[
\bigoplus_{i<j} \Hom_{\DD(Q,W)}^{b_j-b_i+1}(x_{a_i},x_{a_j})\simeq\bigoplus_{i<j} \Hom_{\DD(Q,W)}^1(x_{a_i}[b_i],x_{a_j}[b_j]) \subset \Hom_{\DD(Q,W)}(T,T) 
\]
satisfying the Maurer--Cartan equation
\[
\sum_{n\geq 1} b_{\DD(Q,W),n}(\alpha,\ldots,\alpha)=0.
\]
Given two pairs $(T_1,\alpha_1)$ and $(T_2,\alpha_2)$, where $T_1=\bigoplus_{i\in I}x_{a_{1,i}}[b_{1,i}]$ and $T_2=\bigoplus_{j\in J}x_{a_{2,j}}[b_{2,j}]$, we define the graded vector space 
\[
\Hom_{\tw(\DD(Q,W))}((T_1,\alpha_1),(T_2,\alpha_2)):=\bigoplus_{i,j}\Hom_{\DD(Q,W)}(x_{a_{1,i}},x_{a_{2,j}})[b_{2,j}-b_{1,i}]\simeq \Hom_{\DD(Q,W)}(T_1,T_2).  
\]
Multiplication is twisted by setting
\[
b_{\tw(\DD(Q,W))}(f_n,\ldots,f_1)=\sum b_{\DD(Q,W)}(\alpha_n,\ldots,\alpha_n,f_n,\alpha_{n-1},\ldots,\alpha_1,f_1,\alpha_0,\ldots,\alpha_0)
\]
where $f_i \in\Hom_{\tw(\DD(Q,W))}((T_{i-1},\alpha_{i-1}),(T_i,\alpha_i))$.  One may check that this is again a 3-Calabi--Yau category.  For 
\begin{align*}
f\in&\Hom_{\tw(\DD(Q,W))}\left((\bigoplus_{i\in I} x_{a_{1,i}}[b_{1,i}],\alpha_1),(\bigoplus_{j\in J} x_{a_{2,j}}[b_{2,j}],\alpha_2)\right)
\\g\in&\Hom_{\tw(\DD(Q,W))}\left((\bigoplus_{j\in J} x_{a_{2,j}}[b_{2,j}],\alpha_2),(\bigoplus_{i\in I} x_{a_{1,i}}[b_{1,i}],\alpha_1)\right)
\end{align*}
one sets \[
\langle f,g\rangle:=\sum_{i\in I,j\in J}\langle f_{ij},g_{ji}\rangle,
\]
where we denote by $f_{ij}$ the degree $(b_{2,j}-b_{1,i})$ morphism $x_{a_{1,i}}\rightarrow x_{a_{2,j}}$ induced by $f$, and define $g_{ji}$ similarly.
\medbreak 
In fact we will only be interested in the category $\tw_0(\DD(Q,W))$, which we define to be the full subcategory of $\tw(\DD(Q,W))$ with objects given by pairs $(T,\alpha)$, with $T$ isomorphic to a finite direct sum of \textit{unshifted} copies of the right modules $x_i\in\ob(\DD(Q,W))$.  Under $\RHom_{\rmod\Gamma(Q,W)}(R_{\Gamma(Q,W)},-)$ this in turn is an enrichment of the Abelian category of finite-dimensional nilpotent modules over the Jacobi algebra $A_{Q,W}$.
\medbreak
Let $\TF_{\nn}$ be the moduli functor on finite type schemes defined as follows
\begin{align*}
\TF_{\nn}(X):=&\{\mbox{pairs of rank $\sum_{i\in \verts(Q)}\nn(i)$ vector bundles }\bigoplus_{i\in \verts(Q)}T_{\nn(i)} \mbox{ on }X \mbox{ and }\\&\alpha\in\bigoplus_{i,j\in \verts(Q)} \Hom^1_{\DD(Q,W)}(x_i,x_j)\otimes T_{\nn(i)}^*\otimes T_{\nn(j)}\\ &\mbox{such that }\sum_{n\geq 1} b_{\DD(Q,W),n}(\alpha,\ldots,\alpha)=0\}.
\end{align*}
This moduli functor takes schemes over $\Cp$ to sets of families of objects in $\tw_0(\DD(Q,W))$.  This is naturally made into a groupoid valued moduli functor, where the morphisms are defined via the conjugation action of $\prod_{i\in \verts(Q)}\Gl_{\Cp}(\nn(i))$.  There is a natural isomorphism of moduli functors $\TF_{\nn}\rightarrow \TN_{\nn}$, where
\begin{align*}
\TN_{\nn}(X):=&\{\mbox{vector bundles }T \mbox{ on }X\mbox{ with a }\OO_X\otimes \Gamma(Q,W)\mbox{-action, nilpotent} \\&\mbox{with respect to the }\Gamma(Q,W)\mbox{ factor, such that for all } i\in \verts(Q),\\& T\cdot e_i \mbox{ is a rank } \nn(i)\mbox{ vector bundle.}\}
\end{align*}
The moduli functor $\TN_{\nn}$ is again a groupoid valued functor with morphisms given by conjugation, and its groupoid of geometric points is the same as for the stack $\XX^{\nilp}_{Q,W,\nn}$.  \medbreak

\subsection{Orientation data}
There is one extra piece of data, aside from the 3-Calabi--Yau category \\$\tw_0(\DD(Q_{-2},W_d))$, that we need before we can apply the machinery of \cite{KS} to define and compute motivic Donaldson--Thomas invariants of (-2)-curves, which is the data $(\LLL,\phi)$ of an ind-constructible super (i.e. $\mathbb{Z}_2$-graded) line bundle $\LLL$ on $\XX^{\nilp}_{Q_{-2},W_d}$ along with a chosen trivialisation of the tensor square 
\[
\phi\colon \LLL^{\otimes 2}\cong \one_{\XX^{\nilp}_{Q_{-2},W_d}}.
\]  
\medbreak
Note that every constructible super line bundle $\LLL$ on a scheme $X$ has trivial tensor square, since up to constructible decomposition of the base $X$ we can write 
\[
\LLL\cong \one_{X_{\textrm{even}}}\oplus\one_{X_{\textrm{odd}}}[1], 
\]
where $X=X_{\textrm{even}}\coprod X_{\textrm{odd}}$ is the constructible decomposition of $X$ defined by the constructible function on $X$ provided by taking the parity of $\LLL$.  So all the data here is in this choice of trivialisation (and the parity of the super line bundle $\LLL$).  Such data is required to satisfy a cocycle condition (see Section 5.2 of \cite{KS}), ensuring that the integration map defined with respect to it (see Equation (\ref{intdef})) is a $\KK(\Sta/\Spec(\Cp))$-algebra homomorphism, and is called orientation data in \cite{KS}.  An isomorphism of orientation data is just an isomorphism of the underlying constructible super line bundles commuting with the trivialisations of the squares.  Isomorphic choices will give rise to the same integration map (\ref{intdef}).  In fact for $Q,W$ a finite quiver with arbitrary potential, $\XX_{Q,W}^{\nilp}$ comes with a natural choice of orientation data, which we briefly describe; more details can be found in \cite[Sec.7.1]{Dav10a}.
\medbreak
Given an object $\eta=(T,\alpha)$ of $\tw_0(\DD(Q,W))$, there is an explicit model of the cyclic $A_{\infty}$-algebra $\End_{\tw(\DD(Q,W))}(\eta)$, coming from the definition of $\tw(\DD(Q,W))$.  In particular, there is a differential\footnote{Recall that $\End^{\bullet}_{\tw(\DD(Q,W)))}(\eta)\cong\End^{\bullet}_{\DD(Q,W)}(T)$ as graded vector spaces, and does not depend on $\alpha$, so in fact we obtain a family of differentials as we vary $\alpha$.} $b_{\alpha,1}$ on $\End^{\bullet}_{\tw(\DD(Q,W)))}(\eta)$, and a nondegenerate inner product on $\End^1_{\tw(\DD(Q,W))}(T)/\Ker(b_{\alpha,1})$ given by $\langle b_{\alpha,1}(\bullet),\bullet\rangle$.  Across the family of possible $\alpha$ in the pair $(T,\alpha)$, given by solutions to the Maurer--Cartan equation, we obtain a constructible vector bundle 
\begin{equation}
\label{quadhome}
\End^1_{\tw(\DD(Q,W))}(T)/\Ker(b_{\tw(\DD(Q,W)),1})
\end{equation}
with nondegenerate quadratic form which we will denote by $\overline{Q}$.  It is only a constructible vector bundle since the dimension of (\ref{quadhome}) jumps, due to the dependancy of $b_{\tw(\DD(Q,W)),1}$ on $\alpha$.  Given a constructible super vector bundle $\VV$ on a stack $\SS$, one defines the superdeterminant 
\[
\sDet(\VV):=\prod  ^{\dim(\VV)}\left(\bigwedge^{\mathrm{top}}\VV_{\mathrm{even}}\otimes  \bigwedge^{\mathrm{top}}\VV_{\mathrm{odd}}^*\right),
\]
where here $\prod$ denotes the change of parity functor.  Say now $\VV$ has nondegenerate quadratic form $\overline{Q}_{\VV}$, then we obtain a trivialisation of $\sDet(\VV)^{\otimes 2}$ since $\overline{Q}_{\VV}$ establishes an isomorphism $\sDet(\VV)\cong\sDet(\VV)^*$.  In the present situation, orientation data on $\XX^{\nilp}_{Q_{-2},W_d}$, considered as the moduli space of objects in $\tw_0(\DD(Q_{-2},W_d))$, is provided by the superdeterminant of (\ref{quadhome}), with the trivialisation of the tensor square provided by the nondegenerate inner product $\langle b_{\tw(\DD(Q_{-2},W_d)),1}(\bullet),\bullet\rangle$.  We will denote this choice of orientation data by $\tau_{Q_{-2},W_d}$.\medbreak
\begin{rem}\cite[Thm.8.3.1]{Dav10a}
\label{ODchoice}
There are in general several choices for the orientation data of a 3-Calabi--Yau category.  However in the case of the category $\tw_0(\DD(Q,W))$ this range of choices is quite small, due to the constraint that the orientation data must satisfy the cocycle condition from \cite{KS}.  In fact the orientation data is determined up to isomorphism entirely by its restriction to the simple modules $x_i$, for $i\in \verts(Q)$, and so one deduces that there are $2^{\verts(Q)}$ isomorphism classes of choices, giving rise to $2^{\verts(Q)}$ distinct integration maps, defined as in Theorem \ref{intmappres}.
\end{rem}
\begin{defn}
A constructible 3-Calabi--Yau vector bundle on a scheme $X$ is a constructible $\mathbb{Z}$-graded vector bundle $V$, along with degree one morphisms $b_{n}(V[1])^{\otimes n}\rightarrow V[1]$ and a degree minus one morphism $\langle \bullet,\bullet\rangle\colon V[1]\otimes V[1]\rightarrow \one_X$ satisfying the same conditions as a 3-Calabi--Yau category.
\end{defn}
We recall the definition of a morphism of cyclic $A_{\infty}$-objects in the case of a constructible 3-Calabi--Yau vector bundle.
\begin{defn}
A morphism $f\colon V\rightarrow V'$ of constructible 3-Calabi--Yau vector bundles is a countable collection of morphisms of constructible vector bundles $f_n\colon V[1]^{\otimes n}\rightarrow V'[1]$ satisfying the conditions
\[
\sum_{\alpha+\beta+\gamma=n}f_{\alpha+1+\gamma}(\one^{\otimes\alpha}\otimes b_{\beta}\otimes\one^{\otimes\gamma})=\sum_{n=\alpha_1+\ldots+\alpha_s}b'_{s}(f_{\alpha_1}\otimes\ldots\otimes f_{\alpha_s}),
\]
for all $n$ as well as the extra conditions that $\langle\bullet,\bullet\rangle_{V'}\circ f_1\otimes f_1=\langle\bullet,\bullet\rangle_{V}$ and $\sum_{a+b=n}\langle \bullet,\bullet\rangle_{V'}\circ f_a\otimes f_b=0$ for all $n\geq 3$.
\end{defn}
To complete the definition of the integration map of \cite{KS} we need the following proposition.
\begin{prop}\cite[Thm.5.15]{kaj03}
\label{cmmt}
There is a locally constructible formal isomorphism of cyclic $A_{\infty}$-vector bundles \[
(\End^{\bullet}_{\tw(\DD(Q_{-2},W_d))},b_{\tw(\DD(Q_{-2},W_d))})\cong (\Ext^{\bullet}_{\tw(\DD(Q_{-2},W_d))}\oplus V^{\bullet},b')
\]
on the stack $\XX^{\nilp}_{Q_{-2},W_d}$ such that $b'_1$ factors via a map $V^{\bullet}\rightarrow V^{\bullet}$, for $i\geq 2$ $b'_i$ factors via a map $(\Ext^{\bullet}_{\tw(\DD(Q_{-2},W_d))})^{\otimes i}\rightarrow \Ext^{\bullet}_{\tw(\DD(Q_{-2},W_d))}$, and $(V^{\bullet},b'_1)$ is an acyclic complex.  This splitting is unique up to isomorphisms of cyclic $A_{\infty}$-vector bundles.
\end{prop}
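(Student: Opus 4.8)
The plan is to deduce this from the pointwise cyclic minimal model theorem of \cite{kaj03} (homotopy transfer of cyclic $A_\infty$-structures) together with a check that all the choices involved can be made constructibly in families over $\XX^{\nilp}_{Q_{-2},W_d}$. The model $\tw(\DD(Q_{-2},W_d))$ is used precisely so that $\End^\bullet_{\tw(\DD(Q_{-2},W_d))}$ is a finite-dimensional constructible vector bundle on $\XX^{\nilp}_{Q_{-2},W_d}$ carrying a fibrewise nondegenerate pairing, so the relevant linear algebra is available.

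First I would fix an object $\eta=(T,\alpha)$, with its cyclic $A_\infty$-algebra $(\End^\bullet_{\tw(\DD(Q_{-2},W_d))}(\eta),b_\bullet,\langle\bullet,\bullet\rangle)$ and differential $b_1=b_{\alpha,1}$. Because the pairing is invariant under $b_1$ (equivalently $b_1$ is skew-adjoint), the radical of the induced pairing on $\Ker(b_1)$ is $\operatorname{im}(b_1)$, so $\Ext^\bullet:=\Ho^\bullet(\End^\bullet,b_1)$ inherits a nondegenerate pairing; I would then pick a graded subspace $\mathcal H^\bullet\subset\Ker(b_1)$ complementary to $\operatorname{im}(b_1)$ on which the pairing stays nondegenerate, set $V^\bullet:=(\mathcal H^\bullet)^{\perp}$ — an acyclic subcomplex carrying the nondegenerate form $\overline Q$ of (\ref{quadhome}) — and build contraction data $(i,p,h)$ adapted to the orthogonal splitting $\End^\bullet=\mathcal H^\bullet\oplus V^\bullet$ and compatible with $\langle\bullet,\bullet\rangle$ (so $p$ adjoint to $i$, $h$ (anti)self-adjoint, side conditions $h^2=hi=ph=0$). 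Cyclic homotopy transfer along this data then produces $b'$, with components the usual sums over planar rooted trees whose vertices are the $b_n$ and whose internal edges are $h$, satisfying $b'_1$ supported on $V^\bullet$ and $b'_{\ge 2}$ supported on $\mathcal H^\bullet\cong\Ext^\bullet$, together with a cyclic $A_\infty$-isomorphism $(\End^\bullet,b_\bullet)\cong(\mathcal H^\bullet\oplus V^\bullet,b')$ built from the same tree sums; this is the asserted isomorphism over a point.

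To make this work in families I would argue by Noetherian induction, so that it suffices to produce the isomorphism over a dense open of each component. Stratify $\XX^{\nilp}_{Q_{-2},W_d}$ by the locally closed loci on which the graded rank of $b_{\alpha,1}$ — equivalently the graded dimension of $\Ext^\bullet$ — is constant; on such a stratum $\operatorname{im}(b_1)$, $\Ker(b_1)$, and (after a further constructible refinement to trivialise bundles and split off subbundles) $\mathcal H^\bullet$ and $V^\bullet$ are honest vector subbundles of $\End^\bullet$. The contraction data and the tree sums defining $b'$ and the isomorphism $f=\{f_n\}$ are then universal polynomial expressions in the entries of $b_{\alpha,1}$ (polynomial in $\alpha$) and of the higher $b_n$ (constant, read off from $W_d$), hence define morphisms over the stratum; as there are only countably many $f_n$ this gives the locally constructible formal isomorphism. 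Uniqueness up to cyclic $A_\infty$-isomorphism is the analogous statement: composing two such splittings through $\End^\bullet$ gives a cyclic $A_\infty$-isomorphism between them, which on the acyclic summands restricts to an isomorphism of acyclic complexes with nondegenerate compatible forms — classified over $\Cp$ up to such isomorphism by graded dimension — and hence identifies the $\Ext^\bullet$-parts cyclically as well, again stratum by stratum.

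The formal $A_\infty$ content (Kadeishvili-style transfer plus Kajiura's cyclic refinement) is being cited, so the real work is the constructibility: verifying that the stratification by $\dim\Ext^\bullet$ is by locally closed substacks, that on each stratum the orthogonal decomposition $\End^\bullet=\mathcal H^\bullet\oplus V^\bullet$ can be realised by \emph{subbundles} rather than merely fibrewise, and that the cyclic contraction data can be chosen algebraically and compatibly with the fixed pairing $\langle\bullet,\bullet\rangle$ as $b_{\alpha,1}$ — and with it $\overline Q$ — changes rank across strata. This last point, separating the constructible dependence on $\alpha$ from the jumps in the homology, is where I expect the main difficulty to lie.
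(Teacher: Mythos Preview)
The paper does not actually prove this proposition: it is stated with attribution to \cite{kaj03}, and the only supplementary remark is that the splitting is merely locally constructible because $\dim\Ker(b_1)$ jumps in families, with the sentence ``The reference \cite{kaj03} demonstrates this splitting in the case of cyclic $A_{\infty}$ categories.'' Your proposal is therefore not so much a competing proof as a fleshing-out of what the citation is being asked to do, together with the constructibility argument the paper leaves implicit.

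On the substance, your outline is sound and is essentially the intended argument: pointwise one runs Kajiura's cyclic homotopy transfer from a Hodge-type decomposition $\End^\bullet=\mathcal H^\bullet\oplus V^\bullet$ adapted to the pairing, and in families one stratifies by the rank of $b_{\alpha,1}$ so that on each stratum the pieces are honest subbundles and the tree formulas for $b'$ and $f$ are algebraic in $\alpha$. Your identification of $V^\bullet=(\mathcal H^\bullet)^\perp$ with the bundle carrying the form $\overline Q$ of (\ref{quadhome}) is the right link to the orientation-data discussion that follows in the paper. The uniqueness paragraph is also the standard argument: any two cyclic minimal models are related through the original by a cyclic $A_\infty$-isomorphism, and the acyclic complements with nondegenerate form are classified by graded dimension over $\Cp$. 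The one point where you are being more careful than the paper is in flagging that the orthogonal splitting and the cyclic contraction data must be chosen \emph{algebraically} on each stratum (not just fibrewise); this is indeed where the work lies, and your Noetherian-induction reduction to a dense open, after trivialising the relevant bundles, is the correct way to handle it.
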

Note that even though one starts with the data of a cyclic $A_{\infty}$-vector bundle, the splitting will only take place in the category of locally constructible cyclic $A_{\infty}$-vector bundles, since the dimension of the kernel of $b_1$ will jump in families.  The reference \cite{kaj03} demonstrates this splitting in the case of cyclic $A_{\infty}$-categories; which for us corresponds to the case in which the base of the cyclic $A_{\infty}$-vector bundle is a point.  In fact the proof produces a \textit{canonical} decomposition, once a \textit{choice} of contracting homotopy is made.  Since after constructible decomposition we can construct a contracting homotopy for $b_{\tw(\DD(Q_{-2},W_d)),1}$, the version of the proposition stated above is indeed a consequence of \cite[Thm.5.15]{kaj03}.
\medbreak
Given a constructible 3-Calabi--Yau vector bundle $V$, we define the function $W_{\min}$ as follows.  Firstly, let $E$ be a cyclic minimal model for $V$.  Next, consider the Artin stack $E^1/E^0$ over $X$, given over a point $x\in X$ by taking the stack theoretic quotient of the trivial action of $E_x^0$ on $E_x^1$ (this is an example of a cone stack, see e.g. \cite{NormalCone}, where they arise in a similar context).  We define $W_{\min}$ to be the function on this stack defined by $W_{E}$, the potential for the minimal part $E$.  This potential is strictly speaking only defined up to a formal automorphism, which will not matter when it comes to considering motivic vanishing cycles, as a result of the following proposition. 
\begin{prop}
\label{formalaut}
Let $V$ be a vector bundle on a scheme $X$, and let $f$ be a formal function on $V$ with trivial constant coefficient (i.e.\ $f$ vanishes on $X$, considered as the zero section of $V$) such that $\phi^{\rel}_f|_X$ is well defined.  Let $g$ be another formal function on $V$ vanishing on $X$, such that there exists a formal change of coordinates $q$ on the vector bundle $V$ around $X$ such that $f=g\circ q$, considered as functions on a formal neighbourhood of $X$.  Then $\phi^{\rel}_g|_X$ is well defined and $\phi^{\rel}_f|_X=\phi^{\rel}_g|_X$.
\end{prop}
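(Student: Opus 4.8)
The plan is to compute directly with the arc-space definition of the relative motivic vanishing cycle, exploiting the fact that the generating series $Z_f^{\mathrm{eq}}(T)$ only involves arcs of \emph{finite} length, and that such arcs see the change of coordinates $q$ only through a polynomial truncation of its Taylor expansion. Morally this is the smooth-pullback property $\phi_{h\circ q}=\LL^{-\dim(q)/2}\cdot q^*\phi_h$ applied to $q$ as a ``smooth morphism of relative dimension zero'', but since $q$ is merely a formal automorphism (and $X$ need not be smooth) one cannot invoke that property as stated and instead carries out the argument on the spaces $\Lo_n(V)|_X$ themselves. Throughout I read ``formal change of coordinates on the vector bundle $V$ around $X$'' as a formal automorphism of the completion of $\Tot(V)$ along the zero section that restricts to the identity on $X$ and is compatible with the projection $\pi\colon V\to X$; this is what makes $\phi^{\rel}_f|_X=\phi^{\rel}_g|_X$ even plausible.

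First I would observe that, under this reading, composition with $q$ induces for each $n\ge 1$ a morphism $q_n\colon \Lo_n(V)|_X\to \Lo_n(V)|_X$. Indeed, a point of $\Lo_n(V)|_X$ is a $\Cp[t]/t^{n+1}$-arc $\gamma$ lying in a single fibre $V_x$ and based at $0\in V_x$; since $\gamma$ factors through the formal neighbourhood of $X$, the composite $q\circ\gamma$ makes sense, lies again in $V_x$ by verticality of $q$ and is based at $q(0)=0$, and modulo $t^{n+1}$ it depends only on the $n$-jet of $q$, which is an honest polynomial automorphism of $V_x$ fixing the origin. Hence $q_n$ is an isomorphism of schemes over $X$, with inverse induced by $q^{-1}$. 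Next I would check two compatibilities. It is $\GG_m$-equivariant, since the $\GG_m$-action rescales the arc parameter $t$ and $q$ acts fibrewise, so $q\circ(\lambda\cdot\gamma)=\lambda\cdot(q\circ\gamma)$. And it intertwines the maps to $\AA_X^1$: the base point is unchanged because $q$ is the identity on $X$, while for the $\AA^1_\Cp$-coordinate one has $f\circ\gamma=(g\circ q)\circ\gamma=g\circ(q\circ\gamma)\equiv g\circ q_n(\gamma)\pmod{t^{n+1}}$, using the formal identity $f=g\circ q$ together with the fact that a polynomial sends arguments congruent mod $t^{n+1}$ to values congruent mod $t^{n+1}$; taking $t^n$-coefficients gives $\pi\circ f_*(\gamma)=\pi\circ g_*(q_n\gamma)$.

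Consequently $q_n$ exhibits, for every $n$, the equality
\[
\bigl[\Lo_n(V)|_X\xrightarrow{p_n\times\pi\circ f_*}\AA_X^1\bigr]=\bigl[\Lo_n(V)|_X\xrightarrow{p_n\times\pi\circ g_*}\AA_X^1\bigr]
\]
in $\KK^{\GG_m,n}(\Var/\AA_X^1)$, and since the normalising power $\LL^{-(n+1)\dim(V)/2}$ is the same for $f$ and $g$ this yields $Z_f^{\mathrm{eq}}(T)=Z_g^{\mathrm{eq}}(T)$ as formal power series with coefficients in $\KK^{\muu}(\Var/X)[\LL^{-1/2}]$. By the construction of $\phi^{\rel}$ recalled in the excerpt, $Z_f^{\mathrm{eq}}(T)$ can be evaluated at $T=\infty$; equality of the two series then forces $Z_g^{\mathrm{eq}}(\infty)$ to exist and equal $Z_f^{\mathrm{eq}}(\infty)$, so that $\phi^{\rel}_f=-Z_f^{\mathrm{eq}}(\infty)=-Z_g^{\mathrm{eq}}(\infty)=\phi^{\rel}_g$, and a fortiori $\phi^{\rel}_f|_X=\phi^{\rel}_g|_X$.

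The step I expect to demand the most care is not any single computation but the bookkeeping around $q_n$: making precise that finite-length arcs see only finitely many Taylor coefficients of $q$, that $q_n$ is therefore a genuine algebraic (not merely formal) $\GG_m$-equivariant morphism, and that the formal hypothesis $f=g\circ q$ is exactly what matches the two structure maps to $\AA^1_\Cp$. Two minor points should also be noted: the morphism $q_n$ is defined intrinsically, so there is no gluing issue when $V$ is only Zariski-locally trivial; and since the identity $Z_f^{\mathrm{eq}}=Z_g^{\mathrm{eq}}$ holds over all of $X$ simultaneously, it is automatically compatible with whatever stratification of $X$ is used in the Noetherian induction defining $\phi^{\rel}$.
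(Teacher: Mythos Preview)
Your argument is correct and is essentially the same as the paper's own proof: the paper simply asserts that $q$ induces $\GG_m$-equivariant isomorphisms $q_*\colon \Lo_n(V)|_X\to \Lo_n(V)|_X$ making the triangle with $\pi\circ f_*$ and $\pi\circ g_*$ commute, from which the equality of zeta functions (and hence of $\phi^{\rel}$) is immediate. You have spelled out in more detail exactly why the induced maps $q_n$ on finite arc spaces are well-defined algebraic isomorphisms, equivariant, and compatible with the structure maps to $\AA^1_X$, but the underlying idea is identical.
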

Note that since we are dealing here with functions defined on vector bundles, we use the relative version of motivic vanishing cycles introduced at the end of Section \ref{vancycles}.  
\begin{proof}
The proposition follows straight from the definition, since $q$ induces $\GG_m$-equivariant isomorphisms on arc spaces making the following diagram commute
\[
\xymatrix{
\Lo_n(V)|_{X}\ar[rd]_{\pi\circ f_*}\ar[rr]^{q_*}&&\Lo_n(V)|_{X}\ar[ld]^{\pi\circ g_*}\\
&\AA_{\Cp}^1,
}
\]
where $\pi$ is as in Section \ref{vancycles}, and $\Lo_n(V)$ is as at the end of the same section.
\end{proof}
As in \cite{KS} we would like to identify the relative motivic vanishing cycles $\phi_{Q_1}^{\rel}$, $\phi_{Q_2}^{\rel}$ of quadratic functions $Q_1$, $Q_2$ on constructible vector bundles $V_1$, $V_2$ under the conditions that taking the parity of $V_1$ or $V_2$ gives the same element of $\Gamma_{\constr}(X,\mathbb{Z}_2)$, the group of of constructible $\mathbb{Z}_2$-valued function on $X$, and taking the determinant of $Q_1$ or $Q_2$ gives the same element of $\Gamma_{\constr}(X,\Cp^*)/(\Gamma_{\constr}(X,\Cp^*))^2$.  The reason this is desirable is that it means that we only have to keep track of these two pieces of data for the pair $V,Q$ to know what the relative motivic vanishing cycle $\phi^{\rel}_Q$ is, and the reason this identification is justifiable is that this identification becomes trivial after taking realisations of motives (for example Hodge polynomials, etc.).  This we achieve as follows: we impose the extra relation in $\KK^G(\Var/\SS)$ given by identifying
\[
[X/_{\rho_1}H\xrightarrow{f/\rho_1}\SS]-[X/_{\rho_2}H\xrightarrow{f/\rho_2}\SS]
\]
for all smooth $X$, for all $H$-actions $\rho_1$, $\rho_2$ for $H$ a finite group satisfying the property that the $G$-equivariant function $f$ is $H$-invariant, and that the induced $H$-actions on the cohomology of a fibre over $x$, for any $x\in\SS$, are the same.  One may easily check that the pre $\lambda$-ring structure on $\KK^{\muu}(\Sta/\AA^1_{\SS})$ descends to a pre $\lambda$-ring structure on $\overline{\KK}^{\muu}(\Sta/\AA^1_{\SS})$.

Let $Q,W$ be a quiver with polynomial potential.  Given an element $[X\xrightarrow{f}\XX^{\nilp}_{Q,W,\nn}]\in \KK(\Var/\XX^{\nilp}_{Q,W,\nn})$ Kontsevich and Soibelman define 
\begin{equation}
\label{intdef}
\Phi_{Q,W}([X\xrightarrow{f}\XX^{\nilp}_{Q,W,\nn}]=\left(\int_X f^*\phi^{\rel}_{W_{\min}\boxplus \overline{Q}_{\tau_{Q,W}}}\right)\ehat_{\nn}\in\overline{\KK}^{\muu}(\Sta/\Spec(\Cp))[\LL^{1/2}][[\ehat_{\mm},\mm\in\NN^{\verts(Q)}]]
\end{equation}
where $\overline{Q}_{\tau_{Q,W}}$ is a function $\overline{Q}_{\tau_{Q,W}}(z)=\overline{Q}_{\tau_{Q,W}}(z,z)$ on $V$, for some pair of ind-constructible vector bundle $V$ on $\XX^{\nilp}_{Q,W,\nn}$ and nondegenerate inner product $\overline{Q}_{\tau_{Q,W}}$ on $V$ giving rise to the natural orientation data on $\XX^{\nilp}_{Q,W}$ arising from its realisation as the moduli space of objects in $\tw_0(\DD(Q,W))$.  That is, under the natural identification $\sDet(V)\cong \sDet(V)^*$ induced by $\overline{Q}_{\tau_{Q,W}}$, we obtain the natural orientation data $\tau_{Q,W}$ on $\XX^{\nilp}_{Q,W,\nn}$ given by (\ref{quadhome}) with its natural nondegenerate product.  The function, $W_{\min}$ is as defined after Proposition \ref{cmmt}.  The target is just the ring of formal power series in variables $\ehat_{\mm}$, with the usual\footnote{In fact usually one would twist the multiplication by some power of $-\LL^{1/2}$, but we escape this necessity as we only work with symmetric quivers.} multiplication $\ehat_{\mm'}\cdot\ehat_{\mm}=\ehat_{\mm'+\mm}$.  One extends to a map 
\[
\Phi_{Q,W}\colon \KK(\Sta/\XX^{\nilp}_{Q,W})\rightarrow \overline{\KK}^{\muu}(\Sta/\Spec(\Cp))[\LL^{1/2}][[\ehat_{\mm},\mm\in\NN^{\verts(Q)}]]
\]
by $\KK(\Sta/\Spec(\Cp))$-linearity and Proposition \ref{localis}.\medbreak
For general 3-Calabi--Yau categories the following is only a theorem if one is able to work with motivic vanishing cycles of formal functions and prove the motivic integral identity of \cite{KS}.  For the former, see the comment immediately following the theorem.  For the latter, see \cite{MotVan} or \cite{LQT12}.
\begin{thm}[\cite{KS}]
\label{intmappres}
The morphism $\Phi_{Q,W}\colon \KK(\Sta/\XX^{\nilp}_{Q,W})\rightarrow \overline{\KK}^{\muu}(\Sta/\Spec(\Cp))[\LL^{1/2}][[\ehat_{\nn}|\nn\in \NN^{\verts(Q)}]]$ is a $\KK(\Sta/\Spec(\Cp))$-algebra homomorphism.
\end{thm}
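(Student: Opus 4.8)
The plan is to check multiplicativity of $\Phi_{Q,W}$ directly against the Hall product $\star$ of Subsection \ref{Hallalg}. Since both $\star$ and the target product $\ehat_{\mm'}\cdot\ehat_{\mm}=\ehat_{\mm'+\mm}$ are $\KK(\Sta/\Spec(\Cp))$-bilinear, and the extension of $\Phi_{Q,W}$ to stacks is forced by $\KK(\Sta/\Spec(\Cp))$-linearity and Proposition \ref{localis}, it suffices to prove $\Phi_{Q,W}(\xi_0\star\xi_1)=\Phi_{Q,W}(\xi_0)\,\Phi_{Q,W}(\xi_1)$ for $\xi_i=[X_i\xrightarrow{f_i}\XX^{\nilp}_{Q,W,\nn_i}]$ with $X_i$ varieties; preservation of the unit $\ehat_0=1$ is immediate. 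Unwinding the definitions of $\star$ and of (\ref{intdef}), the left-hand side equals, up to the formal variable $\ehat_{\nn_0+\nn_1}$, the integral over the stack $X_2$ — the pullback of $X_0\times X_1$ along $\pi_1\times\pi_3$ to the closed substack of $\YY_{Q,\nn_0,\nn_1}$ cut out by the relations $\partial W/\partial E$, i.e.\ the moduli stack of nilpotent $A_{Q,W}$-modules equipped with a subrepresentation of dimension $\nn_0$ — of the pullback along $\pi_2$ of $\phi^{\rel}_{W_{\min}\boxplus\overline{Q}_{\tau_{Q,W}}}$.

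To make this computable I would first trade the minimal-model potential for the honest one: by Propositions \ref{cmmt} and \ref{mmt}, on $\XX^{\nilp}_{Q,W,\nn}$ the class $\phi^{\rel}_{W_{\min}\boxplus\overline{Q}}$ agrees (stratum by stratum, with $\overline{Q}$ realised as the quadratic part of the differential on the complement of the minimal model) with the relative vanishing cycle of the polynomial potential $W$, regarded as a function on the representation space $\YY_{Q,\nn}$ whose critical locus is $\XX_{Q,W,\nn}$. So one may compute with $W$ directly. The key structural observation is then how $W$ behaves along the flag-preserving locus: writing a representation of dimension $\nn_0+\nn_1$ that preserves the standard flag in block-triangular form, the potential splits as $W^{(0)}\boxplus W^{(1)}\boxplus W^{\mathrm{od}}$, where $W^{(i)}$ is $W$ evaluated on the diagonal $\nn_i$-block and $W^{\mathrm{od}}$ is a function on the affine space of strictly-off-diagonal arrow data. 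Because the flag forces every off-diagonal arrow to point in one direction, a cyclic word contributing to $W^{\mathrm{od}}$ must traverse at least two such arrows, so $W^{\mathrm{od}}$ has no linear term and is equivariant for the $\GG_m$-action rescaling the off-diagonal data.

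The heart of the argument is the motivic integral identity of \cite{KS}: applied to $W^{\mathrm{od}}$ with the above $\GG_m$-action, it asserts that integrating $\phi^{\rel}$ along the off-diagonal directions — equivalently, along the fibres of $\pi_1\times\pi_3$ together with the unipotent radical of the parabolic $\prod_{i\in\verts(Q)}\Gl_{\Cp}(\nn_0(i),\nn_1(i))$ — contributes only a power of $\LL$ determined by the Euler pairing of $\nn_0$ and $\nn_1$, times the external (Looijenga convolution) product of the vanishing cycles attached to $\nn_0$ and $\nn_1$. Feeding this in, together with the motivic Thom--Sebastiani theorem handling the $\boxplus\,\overline{Q}$ summand and the split $W^{(0)}\boxplus W^{(1)}$, the identity $[\Gl_{\Cp}(a,b)]=\LL^{ab}[\Gl_{\Cp}(a)][\Gl_{\Cp}(b)]$ for the parabolic, and the normalisations $\LL^{-(n+1)\dim/2}$ and $\LL^{m^2/2}[\BGL_{\Cp}(m)]$ built into $\phi^{\rel}$ and into (\ref{intdef}), one checks that all motivic prefactors cancel; here it is essential that $Q$ is symmetric, so that the Euler pairing is symmetric and no half-power $\LL^{1/2}$ twist is needed — this is precisely why the target product is untwisted. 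What survives is $\Phi_{Q,W}(\xi_0)\,\Phi_{Q,W}(\xi_1)$, provided these identifications are compatible with the chosen orientation data: the cocycle condition of Section 5.2 of \cite{KS}, satisfied by $\tau_{Q,W}$ by its construction from the superdeterminant of (\ref{quadhome}) and the nondegenerate pairing $\langle b_{\tw(\DD(Q,W)),1}(\bullet),\bullet\rangle$, is exactly what guarantees that the superdeterminant line and its square-root trivialisation restrict multiplicatively across the extension stack, so no parity or sign discrepancy arises.

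The main obstacle is the motivic integral identity itself — that $\phi^{\rel}$ of a $\GG_m$-equivariant function without linear term integrates, over the contracting directions, to a pure power of $\LL$. For polynomial potentials this can be proved by motivic integration using a $\GG_m$-equivariant resolution of singularities in the spirit of \cite{DL99}, which is the content of \cite{Svenpaper} and \cite{LQT12}; for genuinely formal potentials, as a general 3-Calabi--Yau category produces, one needs in addition the theory of motivic vanishing cycles of formal functions indicated at the end of Subsection \ref{vancycles}. It is under exactly these provisos, flagged just before the statement, that the theorem holds in the asserted generality.
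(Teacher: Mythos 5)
The paper does not prove this theorem: it is imported wholesale from \cite{KS}, with the theorem header flagging the attribution, and the only independent work the paper does in this vicinity is Definition \ref{hack}, which makes sense of $\phi^{\rel}_f$ for the formal functions that arise from minimal models of Ginzburg dg algebras. So there is no "paper route" to compare with; what you have written is, in outline, the Kontsevich--Soibelman argument, and it correctly names the main ingredients --- reduction to varieties, replacing $W_{\min}\boxplus\overline{Q}$ by the polynomial $W$ via Propositions \ref{cmmt} and \ref{mmt}, the cocycle property of $\tau_{Q,W}$, a block decomposition near the flag locus, the motivic integral identity, Thom--Sebastiani, and the cancellation of $\LL$-prefactors using the symmetry of $Q$.

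One imprecision is worth flagging. On the flag-preserving locus, i.e.\ among block-upper-triangular representations, $\tr(W)$ literally equals $\tr(W^{(0)})+\tr(W^{(1)})$ with \emph{no} off-diagonal contribution whatsoever: the trace of a product of block-upper-triangular matrices depends only on the diagonal blocks. The function $W^{\mathrm{od}}$ to which the integral identity is applied lives on the \emph{normal} directions to the flag locus inside $\YY_{Q,\nn_0+\nn_1}$, i.e.\ it involves the strictly lower-triangular block $C$ of each arrow, and its leading term is the quadratic pairing between $C$ and the unipotent radical $B$ of the parabolic $\prod_i\Gl_{\Cp}(\nn_0(i),\nn_1(i))$. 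A cyclic word has no linear term in $(B,C)$ because it must return to its starting block, which forces at least one $B$ and one $C$ --- your conclusion is correct, but the stated reason ("the flag forces every off-diagonal arrow to point in one direction") describes the locus where $W^{\mathrm{od}}$ vanishes identically, not where it is quadratic. Apart from this, your sketch correctly locates the two genuinely hard inputs the paper explicitly defers: the motivic integral identity (now proved in \cite{Svenpaper} and \cite{LQT12}) and a usable theory of vanishing cycles for formal functions (handled here by Definition \ref{hack}).
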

The issue with formal functions is not a serious one in our case.  Let $X/\Spec(\Cp)$ be a finite type scheme.  There is a multiplication $\star_X$ on $\KK^{\GG_m,n}(\Sta/\AA_{X}^1)$ given by 
\[
[Y_1\xrightarrow{f_1}\AA_{X}^1]\star_X [Y_2\xrightarrow{f_2}\AA_{X}^1]=[Y_1\times_X Y_2\xrightarrow{p}\AA_{X}^1],
\]
where the fibre product is with respect to the morphisms $\pi_X\circ f_1$ and $\pi_X\circ f_2$, and the map $p$ is defined by $(y_1,y_2)\mapsto (\pi_{\AA_{\Cp}^1}\circ f_1(y_1)+\pi_{\AA_{\Cp}^1}\circ f_2(y_2),\pi_X\circ f_1(y_1))$.  This multiplication descends to $\KK^{\muu}(\Sta/X)$.  We make the following definition.
\begin{prop}
\label{hack}
Let $V$ be a vector bundle on the scheme $X$, and let $f$ be a formal function on $V$.  Furthermore, assume that there exists a vector bundle $V'$ on $X$ and a quadratic form $\overline{Q}$ on $V'$, such that there is a formal change of coordinates on $V\oplus V'$ taking $f\boxplus \overline{Q}$ to a polynomial function $g$ (here we abuse notation and write $\overline{Q}(z)=\overline{Q}(z,z)$).  Then $\phi^{\rel}_f$ is well defined, and 
\[
\phi^{\rel}_f|_X=\phi^{\rel}_g|_X\star_X \phi^{\rel}_{\overline{Q}}|_X.
\]
\end{prop}
\begin{proof}
It is easy to show that 
\[
[\Lo_n(V'\oplus V')|_{X}\xrightarrow{(\pi\circ(\overline{Q}\boxplus\overline{Q})_*)\times p_n}\AA^1_X]=[X\xrightarrow{(0,\id_X)} \AA_X^1]=1\in\KK^{\GG_m,n}(\Sta/\AA_{X}^1)
\]
for all $n$.  It follows that there are equalities of relative motivic zeta functions 
\begin{align*}
Z_f(T)^{\mathrm{eq}}=&Z_{f\boxplus \overline{Q}\boxplus \overline{Q}}^{\mathrm{eq}}(T)\\
=&Z_{g\boxplus \overline{Q}}^{\mathrm{eq}}(T)
\end{align*}
and the result follows by the Thom--Sebastiani theorem.
\end{proof}
If one works with the minimal potentials of objects in the category of modules over a Ginzburg differential graded algebra for a quiver with polynomial potential, one only needs to deal with formal functions $f$ satisfying the conditions of Proposition \ref{hack}.
\medbreak
There is a more down to earth way to define the integration map for the Hall algebra $\KK(\Sta/\XX^{\nilp}_{Q,W})$.  In fact this second way extends without any effort to an integration map
\begin{equation}
\label{BBSmap}
\Phi_{\BBS,Q,W}\colon \KK(\Sta/\XX_{Q,W})\rightarrow \KK^{\muu}(\Sta/\Spec(\Cp))[\LL^{-1/2}][[\ehat_{\nn},\nn\in\NN^{\verts(Q)}]]
\end{equation}
exploited by Behrend, Bryan and Szendr\H{o}i to define and calculate motivic Donaldson--Thomas counts for Hilbert schemes of points on $\mathbb{C}^3$ in \cite{BBS}.  The Hodge theoretic version of this construction is a part of \cite{COHA}, see also \cite{DS09}.  One defines, similarly to the Kontsevich--Soibelman integration map,
\[
\Phi_{\BBS,Q,W}\colon [X\xrightarrow{f}\XX_{Q,W,\nn}]\mapsto\int_X f^*\phi_{\tr(W)}\ehat_{\nn}.
\]
Let 
\[
\overline{q}\colon \KK^{\muu}(\Sta/\Spec(\Cp))[\LL^{-1/2}][[\ehat_{\nn},\nn\in\NN^{\verts(Q)}]]\rightarrow \overline{\KK}^{\muu}(\Sta/\Spec(\Cp))[\LL^{-1/2}][[\ehat_{\nn},\nn\in\NN^{\verts(Q)}]]
\]
be the natural quotient map.  The following comparison theorem will be used in the proof of Propositions \ref{offdiag} and \ref{diag}.
\begin{prop}\cite[Thm.7.1.3]{Dav10a}
\label{comparisonthm}
There is an equality of maps $\overline{q}\circ\Phi_{\BBS,Q,W}|_{\KK(\Sta/\XX^{\nilp}_{Q,W})}=\Phi_{Q,W}$.
\end{prop}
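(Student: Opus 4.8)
The plan is to deduce the equality from the motivic minimal model theorem, Proposition \ref{mmt}, once both sides have been rewritten as relative motivic vanishing cycles of potentials attached to \emph{the same} constructible $3$-Calabi--Yau vector bundle over $\XX^{\nilp}_{Q,W}$. Both maps are obtained from their defining formulae on classes $[X\xrightarrow{f}\XX^{\nilp}_{Q,W,\nn}]$, with $X$ a variety, by $\KK(\Sta/\Spec(\Cp))$-linearity and Proposition \ref{localis}; so it suffices to fix such a generator and prove the pointwise identity $f^{*}\phi_{\tr(W)}=f^{*}\phi^{\rel}_{W_{\min}\boxplus\overline{Q}_{\tau_{Q,W}}}$ of relative vanishing cycle classes over $X$, after which one applies $\int_{X}$ and multiplies by $\ehat_{\nn}$.

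First I would unwind the left--hand side on the smooth representation scheme $R_{Q,\nn}:=\prod_{a\in\edge(Q)}\Hom(\Cp^{\nn(t(a))},\Cp^{\nn(s(a))})$, on which $\tr(W)$ is an honest polynomial with $\Crit(\tr(W))/\Gl_\Cp(\nn)=\XX_{Q,W,\nn}$, where $\Gl_\Cp(\nn)=\prod_{i\in\verts(Q)}\Gl_\Cp(\nn(i))$; by definition $\phi_{\tr(W)}$ on the quotient stack is $\LL^{(\dim\Gl_\Cp(\nn))/2}[\BGl_\Cp(\nn)]$ times the pushforward of $\phi_{\tr(W)}$ computed on $R_{Q,\nn}$. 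The conceptual input is that for a point $\alpha\in\XX^{\nilp}_{Q,W,\nn}(\Cp)$, corresponding to an object $\eta=(T,\alpha)$ of $\tw_{0}(\DD(Q,W))$ with $T=\bigoplus_{i}x_{i}^{\oplus\nn(i)}$, the tangent space $T_{\alpha}R_{Q,\nn}$ is canonically $\End^{1}_{\DD(Q,W)}(T)=\End^{1}_{\tw(\DD(Q,W))}(\eta)$, and under this identification the Taylor expansion of $\tr(W)$ around $\alpha$ is the cyclic potential $W_{A_{\eta}}$ of the constructible $3$-Calabi--Yau vector bundle $A^{\bullet}:=\End^{\bullet}_{\tw(\DD(Q,W))}$ over $\XX^{\nilp}_{Q,W}$: this is the identity $W=W_{\DD(Q,W)}|_{\End^{1}(\bigoplus_{i\in\verts(Q)}x_{i})}$ noted earlier, together with the standard fact that passing from $\DD(Q,W)$ to $\tw(\DD(Q,W))$ Taylor--shifts the potential by the Maurer--Cartan datum $\alpha$. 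Moreover $A^{0}=\End^{0}_{\DD(Q,W)}(T)$ is canonically the Lie algebra of $\Gl_\Cp(\nn)$, $A^{3}\cong(A^{0})^{*}$, and $b_{\alpha,1}\colon A^{0}\to A^{1}$ is the infinitesimal gauge action.

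Now I would apply Proposition \ref{mmt} to $A^{\bullet}$, with cyclic minimal model $E^{\bullet}=\Ext^{\bullet}_{\tw(\DD(Q,W))}$ (Proposition \ref{cmmt}) and acyclic complement $V^{\bullet}$, to get $\phi^{\rel}_{W_{E}\boxplus Q}|_{X}=\phi^{\rel}_{W_{A}}|_{X}$, where $W_{E}=W_{\min}$ by the definition of $W_{\min}$ given after Proposition \ref{cmmt} (well defined only up to a formal automorphism, harmlessly by Proposition \ref{formalaut}) and $Q=\langle\bullet,b_{V,1}(\bullet)\rangle$ on $V^{1}$. Two things then remain. First, one must identify $Q$ with $\overline{Q}_{\tau_{Q,W}}$: from the proof of Proposition \ref{mmt}, $V^{1}$ is the orthogonal sum of $A^{1}/\Ker(b_{\alpha,1}\colon A^{1}\to A^{2})$ with the form $\langle b_{\alpha,1}(\bullet),\bullet\rangle$ --- which is exactly the vector bundle (\ref{quadhome}) defining the natural orientation data $\tau_{Q,W}$ --- and of $\Image(b_{\alpha,1}\colon A^{0}\to A^{1})$, the gauge directions, carrying the zero form. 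Second, one must reconcile the stacky normalisations: the zero--form summand above, together with the contributions of $A^{0}$ and of $A^{3}\cong(A^{0})^{*}$, is precisely what is bookkept by the factor $\LL^{(\dim\Gl_\Cp(\nn))/2}[\BGl_\Cp(\nn)]$ and by the half--dimensional $\LL^{1/2}$--twists built into the definitions of $\phi_{\tr(W)}$ on the quotient stack and of $Z^{\mathrm{eq}}_{f}$. One should also keep in mind that $W_{\min}$ is in general only a \emph{formal} function, and that the minimal model coordinate change, together with the polynomiality of $\tr(W)$, is exactly a witness that $W_{\min}$ satisfies the hypothesis of Definition \ref{hack}, so that $\phi^{\rel}_{W_{\min}\boxplus\overline{Q}_{\tau_{Q,W}}}$ is defined and, essentially by that definition, computes the claimed normalisation of $\phi^{\rel}_{\tr(W)}$; here one uses the motivic Thom--Sebastiani theorem and the identity $\phi^{\rel}_{\overline{Q}}\star_{X}\phi^{\rel}_{\overline{Q}}=[X\to X]$. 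I expect the genuine obstacle to be this last bookkeeping of powers of $\LL^{1/2}$ and of $[\BGl]$--factors --- together with verifying that the quadratic form extracted from the minimal model splitting really is the one cutting out $\tau_{Q,W}$, rather than a twist of it --- rather than any conceptual step.
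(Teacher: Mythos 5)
The paper does not actually give a proof of this proposition: it is stated with the citation tag \texttt{[\cite{Dav10a}]} and no proof environment follows, i.e.\ it is imported as an external result from Davison's earlier work. So there is no in-paper argument for your sketch to be compared against.

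That said, your outline is a plausible reconstruction of the argument, and it is consistent with the machinery the paper sets up (Propositions \ref{formalaut}, \ref{mmt}, \ref{cmmt}, Definition \ref{hack}, and the Thom--Sebastiani identity $\phi^{\rel}_{\overline{Q}}\star_X\phi^{\rel}_{\overline{Q}}=[X\to X]$). Two points are worth pressing on, both of which you flag yourself. First, the identification of the Taylor expansion of $\tr(W)$ around a point $\alpha$ of $\XX^{\nilp}_{Q,W,\nn}$ with the potential $W_{A}$ of $A^{\bullet}=\End^{\bullet}_{\tw(\DD(Q,W))}(\eta)$ is not quite a pointwise statement: $\tr(W)$ lives on the smooth scheme $Y_{Q,\nn}$ rather than on $A^1$, and one must verify that this identification is \emph{constructible over the whole stack} (so that Proposition \ref{mmt} applies family-wise rather than fibre-by-fibre), and that the combinatorial normalisations $1/n$ in $W_A(z)=\sum_n\langle b_{A,n-1}(z,\dots,z),z\rangle/n$ match the Taylor coefficients of $\tr(W(\alpha+z))$ — this works but deserves to be written down. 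Second, and more seriously, the reconciliation of the $\LL^{m^2/2}[\BGL_{\Cp}(m)]$ normalisation built into $\phi_{\tr(W)}$ on the quotient stack with the contributions of $A^0\cong\gl(\nn)$, $A^3\cong(A^0)^*$, and the zero-form gauge summand $\Image(b_{\alpha,1}\colon A^0\to A^1)$ is exactly the step that turns the $\Phi_{\BBS}$-weight $\phi_{\tr(W)}$ into the $\Phi_{Q,W}$-weight $\phi^{\rel}_{W_{\min}\boxplus\overline{Q}_{\tau_{Q,W}}}$; leaving it as ``bookkeeping'' means the proof is not complete, since an erroneous half-power of $\LL$ here would falsify the proposition rather than merely untidy it. As a sketch the route is sound; as a proof, the last paragraph still needs to be carried out.
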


\section{Motivic Donaldson--Thomas invariants of (-2)-curves}
\label{calcs}
\begin{figure}
\centering
\input{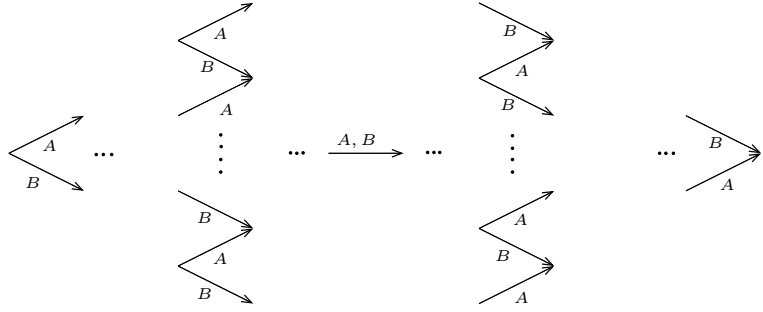}
\caption{Stable representations for $A_{Q_{\con}}$ (or $A_{Q_{\kron}}$) of dimension vector $(1,2), (n,n+1),\ldots,(n+1,n),(2,1)$.  The vertices represent a set of basis elements for the underlying vector space, while the labelled arrows represent the action of the homomorphism, labelled by those arrows, on this basis.  The $(1,1)$-dimensional representation in the centre of the figure lies in a family parametrised by $\mathbb{P}^1$.}
\label{somereps}
\end{figure}

\subsection{The calculation of the invariants}
We are finally able to calculate the motivic Donaldson--Thomas invariants of the category of nilpotent modules over $A_{Q_{-2},W_d}$, the noncommutative crepant resolution of $X_d$ defined in (\ref{Appear}) and explicitly described by (\ref{jacdefe}).  Firstly, we pick a stability condition, which for us is just an additive map
\[
\zeta\colon\NN^{\verts(Q_{-2})}\setminus 0\rightarrow \HH_+, 
\]
where $\HH_+$ is the set $\{r\cdot e^{i\theta}|r\in \mathbb{R}_{>0},\theta\in (0,\pi]\}$ and $\verts(Q_{-2})$ is the set of vertices of the quiver $Q_{-2}$ of Figure \ref{NCres}.  We make the genericity assumption that $\zeta$ does not map the whole of $\NN^{\verts(Q_{-2})}\setminus 0$ onto the same ray in $\Cp$.  As a result of the fact that $Q_{-2}$ is symmetric, the invariants we calculate will not depend on which stability condition $\zeta$ we pick.  We recall, regardless, that a module $M$ of slope $\theta:=\arg(\zeta(\bdim(M)))$ is called (semi)stable if for all proper submodules $N\subset M$, $\arg(\zeta(\bdim(N)))(\leqslant)\arg(\zeta(\bdim(M)))$, where the bracket denotes the fact that for semistability we only require the weak inequality.
\begin{lem}
\label{snilp}
Let $M$ be a semistable nilpotent $A_{Q_{-2},W_d}$-module with slope $\theta$.  Then $M$ is given by repeated extension by stable modules $M_{\alpha}$ of slope $\theta$, such that $M_{\alpha}\cdot (X+Y)=0$.
\end{lem}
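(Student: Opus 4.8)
The plan is to combine the standard Jordan--Hölder filtration for semistable objects with one easy algebraic observation. Write $\CC_\zeta$ for the full subcategory of $\rmodn A_{Q_{-2},W_d}$ consisting of semistable modules of slope $\zeta$; it is abelian and of finite length, so any semistable $M$ of slope $\zeta$ admits a filtration $0=M^{(0)}\subset\cdots\subset M^{(r)}=M$ with each $M_\alpha:=M^{(\alpha)}/M^{(\alpha-1)}$ a simple object of $\CC_\zeta$, i.e.\ a stable module of slope $\zeta$. Thus it suffices to show that every stable nilpotent module $N$ of slope $\zeta$ satisfies $N\cdot(X+Y)=0$.

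First I would observe that $\xi:=X+Y$ is central in $A_{Q_{-2},W_d}$. Writing $X=e_1 X e_1$ and $Y=e_2 Y e_2$ for the two vertex idempotents, all ``cross terms'' such as $XA$, $AY$, $XC$, $CX$ that would obstruct commutation with an arrow vanish on account of the idempotents, while the surviving terms match up precisely because of the relations (\ref{relations}): for instance $\xi A=YA=AX=A\xi$ and $\xi C=XC=CY=C\xi$, and similarly for $B$ and $D$; commutation with $X$, $Y$ and the idempotents is immediate. Hence $\xi$ is central in $\Cp Q_{-2}$, and therefore in the quotient $A_{Q_{-2},W_d}$.

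Now, for a stable module $N$, multiplication by the central element $\xi$ is an endomorphism of $N$ in $\CC_\zeta$. A stable object of $\CC_\zeta$ is a brick: if $f\in\End(N)$ is nonzero then $\ker f$ and $\Image f$ are subobjects of $N$ of slope $\geq\zeta$ respectively $\leq\zeta$, which forces $f$ to be an isomorphism, and then $f$ minus an eigenvalue is a non-isomorphism, hence $0$. So $\xi$ acts on $N$ as a scalar. Since $N$ is nilpotent and $(X+Y)^k=X^k+Y^k$ (the cross terms vanishing as above), $\xi$ acts nilpotently on $N$; a nilpotent scalar is $0$, so $N\cdot(X+Y)=0$, which says exactly that $X$ and $Y$ both act as zero on $N$. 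The only points needing care are the centrality computation and the brick property of stable modules, and neither is genuinely difficult, so I anticipate no real obstacle; one could alternatively phrase the whole argument in terms of the Harder--Narasimhan/Jordan--Hölder formalism for quiver representations à la King, but the route above is the most economical.
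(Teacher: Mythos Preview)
Your argument is essentially the paper's own proof, spelled out in more detail: Jordan--H\"older gives the stable subquotients, $X+Y$ is central, Schur's lemma forces it to act by a scalar, and nilpotency forces that scalar to be zero. One small slip: $\xi=X+Y$ is \emph{not} central in the free path algebra $\Cp Q_{-2}$ (there $YA$ and $AX$ are distinct nonzero paths), only in the Jacobi algebra $A_{Q_{-2},W_d}$ once the relations $AX=YA$, etc., are imposed --- your own computation uses exactly those relations, so just drop the phrase ``central in $\Cp Q_{-2}$'' and the proof is fine.
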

\begin{proof}
The fact that $M$ admits a filtration with subquotients given by stable modules of slope $\theta$ is the statement of the existence of Jordan--H\"older filtrations.  For the second part, note that $X+Y\in A_{Q_{-2},W_d}$ is central, and so acts via module endomorphisms on all $A_{Q_{-2},W_d}$-modules.  This endomorphism is nilpotent for $M$, by assumption.  Define 
\[
F^mM=\Ker(\cdot(X+Y)^m\colon M\rightarrow M)
\]
to be the filtration of $M$ by the nilpotence degree of this endomorphism. then each $F^mM$ is semistable of slope $\theta$, since we have the short exact sequence
\[
0\rightarrow F^mM \rightarrow M\rightarrow \Image\big(\cdot(X+Y)^m\colon M\rightarrow M\big)\rightarrow 0
\]
where the middle term is semistable of slope $\theta$, and both the first and last terms have slope no greater than $\theta$, from which it follows that they have slope equal to $\theta$.  It follows that each subquotient $F^{m}M/F^{m-1}M$ is semistable of slope $\theta$, and the subquotients occurring in a refinement of the filtration $F^{\bullet}M$ to a Jordan--H\"older filtration of $M$ are all acted on by zero by $\cdot (X+Y)$, since each $F^{m}M/F^{m-1}M$ is.
\end{proof}
The data of a module over $A_{Q_{-2},W_d}$ is just the data of a module $M$ over $A_{Q_{\con},W_{\con}}$, the Jacobi algebra for the noncommutative conifold (see Remark \ref{conifold}), along with an endomorphism $\upsilon\colon M\rightarrow M$ given by the action of $X+Y$, satisfying 
\[
\upsilon^d=(a\mapsto a\cdot(AC+CA-BD-DB)).
\]
By Lemma \ref{snilp} and Remark \ref{conifold}, the semistable nilpotent modules of $A_{Q_{-2},W_{d}}$ are given by iterated extension of stable $A_{Q_{\con},W_{\con}}$-modules, considered as $A_{Q_{-2},W_d}$ modules by extension by zero.  The stable nilpotent modules for $A_{Q_{\con},W_{\con}}$ are classified in \cite[Thm.3.5]{NN}.  We have drawn a few of them in Figure \ref{somereps}.  There is one stable nilpotent module for each slope equal to $\zeta((n,n+1))$ or $\zeta((n+1,n))$, for $n\in \NN$ --- consider the vertices in Figure \ref{somereps} as a basis, then the arrows demonstrate the action of the morphisms assigned to $A$ and $B$ on this basis.  These stable modules have dimension vector $(n,n+1)$ or $(n+1,n)$ respectively.  For the slope $\zeta((1,1))$, the stable nilpotent modules are all of dimension vector $(1,1)$, and are parametrised by $\PP^1$.\medbreak
Recall from (\ref{nilpd}) that we denote by $\XX_{Q_{-2},W_d}^{\nilp}$ the substack of finite-dimensional $A_{Q_{-2},W_d}$-modules cut out by the equations $\tr(\rho(c))=0$ for all cyclic paths $c\in\mathbb{C}Q_{-2}$.  The isomorphism classes of closed points of this stack are in bijection with the isomorphism classes of nilpotent $A_{Q_{-2},W_d}$-modules.  We will use the familiar identity\footnote{This identity is just a fancy way of stating the existence and uniqueness of Harder--Narasimhan filtrations.} in the Hall algebra $\KK(\Sta/\XX_{Q_{-2},W_d}^{\nilp})$ defined in Section \ref{Hallalg}, where we abuse notation by omitting the obvious inclusion morphisms into $\XX^{\nilp}_{Q_{-2},W_d}$,
\begin{equation}
\label{HNfilt}
\prod_{\mbox{\scriptsize decreasing slope }\theta} [\XX^{\nilp,\zeta-\sss}_{Q_{-2},W_d,\theta}]=[\XX^{\nilp}_{Q_{-2},W_d}].
\end{equation}
Here $\XX^{\nilp,\zeta-\sss}_{Q_{-2},W_d,\theta}$ is the moduli stack of semistable nilpotent modules with slope $\theta$, and the product is the Hall algebra product defined by Kontsevich and Soibelman on $\KK(\Sta/\XX^{\nilp}_{Q_{-2},W_d})$ (see Section \ref{Hallalg}, and especially Remark \ref{Halldiff}).

\begin{prop}
\label{offdiag}
There is an equation of generating series in $\overline{\KK}^{\muu}(\Sta/\Spec(\Cp))[\LL^{1/2}][[\ehat_{\nn},\nn\in \NN^{\verts(Q_{-2})}]]$
\begin{equation}
\label{rigidpart}
\Phi_{Q_{-2},W_d}\left([\XX^{\nilp,\zeta-\sss}_{Q_{-2},W_d,\theta}]\right)=\Sym\left(\frac{\LL^{-1/2}(1-[\mu_{d+1}])}{\LL^{1/2}-\LL^{-1/2}}\ehat_{\nn}\right)
\end{equation}
for $\arg(\zeta(\nn))=\theta$, and $\nn=(n,n+1)$ or $\nn=(n+1,n)$ with $n\in \NN$.
\end{prop}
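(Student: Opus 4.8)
The plan is to recognise the semistable nilpotent modules of slope $\zeta$ as the nilpotent modules over the one-loop quiver $L_1$ with homogeneous potential $W_d^{(1)}:=\tfrac{1}{d+1}x^{d+1}$, and then to invoke the computation of the motivic Donaldson--Thomas series of $(L_1,W_d^{(1)})$ from \cite{DM11}.

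First I would describe the full subcategory $\Aa_\zeta\subset\rmodn A_{Q_{-2},W_d}$ of semistable modules of slope $\zeta$. By the Lemma above and the discussion preceding the proposition there is, for $\nn=(n,n+1)$ or $(n+1,n)$, a unique stable nilpotent module $S_\nn$ of slope $\zeta$, namely the module of Figure \ref{somereps}, and every object of $\Aa_\zeta$ has all of its Jordan--H\"older factors isomorphic to $S_\nn$; in particular $\upsilon:=\cdot(X+Y)$ acts nilpotently on any such module. Since $C$ and $D$ act as zero on $S_\nn$, the module $S_\nn$ is rigid as a module over the noncommutative conifold algebra $A_{Q_{\con},W_{\con}}$ (by \cite{NN}, or directly from Figure \ref{somereps}), so any $M\in\Aa_\zeta$ with $\bdim(M)=k\nn$ has $M_{\con}\cong S_\nn^{\oplus k}$. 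Via Remark \ref{conifold} the datum of $M$ is then precisely that of the integer $k$ together with an element $\upsilon\in\End_{A_{Q_{\con},W_{\con}}}(S_\nn^{\oplus k})=\gl_k$, subject to $\upsilon^d=\cdot(AC+CA-BD-DB)$; and since $AC+CA-BD-DB=X^d+Y^d$ again acts as zero on $S_\nn^{\oplus k}$ (once more because $C,D$ act as zero), this is just the condition $\upsilon^d=0$. Checking that morphisms correspond as well, $\Aa_\zeta$ is equivalent as an abelian category to $\rmod\Cp[x]/(x^d)=\rmodn A_{L_1,W_d^{(1)}}$, and this equivalence identifies moduli stacks, component by component, as
\[
\XX^{\nilp,\zeta-\sss}_{Q_{-2},W_d,k\nn}\cong\{\upsilon\in\gl_k:\upsilon^d=0\}/\Gl_{\Cp}(k)=\XX^{\nilp}_{L_1,W_d^{(1)},k}.
\]

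Next I would promote this to an identity of motivic weights. By Proposition \ref{comparisonthm} it suffices to work with the orientation-data-free map $\Phi_{\BBS,Q_{-2},W_d}$, so the task becomes to compare $\phi_{\tr(W_d)}$, restricted to the open substack $\XX^{\nilp,\zeta-\sss}_{Q_{-2},W_d,k\nn}\subset\XX_{Q_{-2},W_d,k\nn}$, with $\phi_{\tr(W_d^{(1)})}$ on $\XX^{\nilp}_{L_1,W_d^{(1)},k}$. Because $S_\nn$ is rigid on the conifold side, the local structure of $\YY_{Q_{-2},k\nn}$ around $[S_\nn^{\oplus k}]$ is, up to the group action, a product of $\gl_k$ with a smooth transverse piece, and by the cyclic minimal model theorem (Propositions \ref{cmmt} and \ref{mmt}) the function $\tr(W_d)$ acquires, fibrewise and after a formal change of coordinates, the shape $\tfrac{1}{d+1}\tr(\upsilon^{d+1})$ plus a nondegenerate quadratic form in the transverse coordinates. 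Here I use that the minimal potential attached to $S_\nn$ is necessarily formally equivalent to $W_d^{(1)}$: its Jacobi algebra is $\Cp[x]/(x^d)$, which forces the lowest-order term of the one-variable potential to be a nonzero multiple of $x^{d+1}$, and one normalises the coefficient using Proposition \ref{formalaut}. The quadratic summand contributes trivially in the normalisation of $\phi$ underlying $\Phi_{\BBS}$ (motivic Thom--Sebastiani, as in the remark following Definition \ref{hack}), so under the identification above $\phi_{\tr(W_d)}$ restricts to $\phi_{\tr(W_d^{(1)})}$. Summing over $k$ and matching the formal variables $\ehat_{k\nn}\leftrightarrow\ehat_k$ then gives $\Phi_{\BBS,Q_{-2},W_d}\bigl([\XX^{\nilp,\zeta-\sss}_{Q_{-2},W_d}]\bigr)=\Phi_{\BBS,L_1,W_d^{(1)}}\bigl([\XX^{\nilp}_{L_1,W_d^{(1)}}]\bigr)$, and the same for $\Phi$ by Proposition \ref{comparisonthm}.

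Finally, \cite{DM11} computes the motivic Donaldson--Thomas series of the one-loop quiver with homogeneous potential $W_d^{(1)}$ to be $\Sym\bigl(\tfrac{\LL^{-1/2}(1-[\mu_{d+1}])}{\LL^{1/2}-\LL^{-1/2}}\ehat_1\bigr)$; substituting $\ehat_1\mapsto\ehat_\nn$ yields (\ref{rigidpart}). I expect the main obstacle to be the middle step, namely the honest comparison of vanishing cycles: identifying the minimal potential of $S_\nn$ with $\tfrac{1}{d+1}x^{d+1}$ up to formal equivalence, and verifying that the transverse quadratic directions drop out under the $\Phi_{\BBS}$ normalisation. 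Everything else is bookkeeping with the equivalence $\Aa_\zeta\simeq\rmodn\Cp[x]/(x^d)$ and the already-recorded properties of $\Phi_{\BBS}$.
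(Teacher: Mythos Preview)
Your overall strategy coincides with the paper's: identify the category of $\zeta$-semistable nilpotent $A_{Q_{-2},W_d}$-modules with nilpotent modules over the one-loop Jacobi algebra, then invoke \cite{DM11}. Your first paragraph --- deducing $M_{\con}\cong S_\nn^{\oplus k}$ from rigidity of $S_\nn$ as a conifold module, and hence recovering the equivalence $\Aa_\zeta\simeq\rmod\Cp[x]/(x^d)$ purely algebraically --- is a pleasant alternative to the paper's route, which instead reads off $\Ext^1(S_\nn,S_\nn)$ and $W_{\min}$ from the derived equivalence (\ref{ncDE}) by identifying $S_\nn$ with a shift of $\OO_C(a)$ on $Y_d$.

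The gap is precisely where you flag it. You claim that ``the quadratic summand contributes trivially in the normalisation of $\phi$ underlying $\Phi_{\BBS}$'', citing the remark after Definition~\ref{hack}. But that remark only gives $\phi^{\rel}_{\overline Q}\star_X\phi^{\rel}_{\overline Q}=1$, not $\phi^{\rel}_{\overline Q}=1$; the motivic vanishing cycle of a nondegenerate quadratic form is $1$ only when its rank is \emph{even}. Whether that rank is even is exactly the parity of $\dim\bigl(\End^1_{\tw(\DD(Q_{-2},W_d))}(S_\nn)/\Ker(b_{1})\bigr)$, i.e.\ the orientation-data parity, and the paper carries out this computation explicitly: it is even for $d\ge 2$ and odd for $d=1$, and the \emph{same} parity occurs for the simple object of $\tw_0(\DD(Q_L^1,X^{d+1}))$. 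Only after this check does one know that $\Xi^*\tau_{Q_L^1,X^{d+1}}\cong\tau_{Q_{-2},W_d}$, so that the two integration maps genuinely match. Without it your reduction is unjustified --- in the conifold case $d=1$ your transverse quadratic form has odd rank and its vanishing cycle is \emph{not} trivial; the identity you want still holds, but only because the same nontrivial square-root-of-unity factor appears on the one-loop side and cancels. In short, the step you identify as the main obstacle really is one, and what is missing from your sketch is precisely the parity computation that occupies the middle of the paper's proof.
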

In Equation (\ref{rigidpart}) we consider $[\mu_{d+1}]$ as a $\mu_{d+1}$-equivariant motive via the action of multiplication.
\begin{proof}
The statement reduces to the computation of $W_{\min}$ and the orientation data above the unique $\nn$-dimensional semistable module, for each $\nn$ of slope $\theta$.  Note that on the geometric side of the derived equivalence (\ref{ncDE}) the unique stable module $M$ of slope $\theta$ is given by $\OO_C(a)[b]$ for some $a,b\in \ZZ$.  Since there is a derived autoequivalence of $\DDb(\Coh(Y_d))$ taking $\OO_C(a)[b]$ to $\OO_C$ we deduce that
\begin{equation*}
\dim(\Ext^1(M,M))=\begin{cases} 1 &\mbox{if } d\geq 2\\0&\mbox{otherwise,}\end{cases}
\end{equation*}
and in the case $d\geq 2$, $W_{\min}$ is given by $x^{d+1}$.  Both of these facts follow since the universal deformation of $\OO_C$ is over the Artinian ring $\Cp[x]/(x^{d})$.  The 3-Calabi--Yau category of semistable $A_{Q_{-2},W_d}$-modules of slope $\theta$ is, then, quasi-isomorphic as a cyclic $A_{\infty}$-category to the category $\tw_0(Q_L^1,X^{d+1})$, where for $a\in\NN$, $Q_L^a$ is the quiver with one vertex and $a$ loops.  
\medbreak
We claim that the orientation data $\tau_{Q_{-2},W_d}$ over $M$, considered as a point in $\XX^{\nilp}_{Q_{-2},W_d}$, is trivial if and only if $d\geq 2$.  For this, note that there are precisely two isomorphism classes of orientation data over a point; given two super line bundles $\VV_1$ and $\VV_2$ over a point, i.e.\ vector spaces with parity, and isomorphisms $\VV_i^{\otimes 2}\xrightarrow{\eta_i} \Cp$, there is an isomorphism $\VV_1\xrightarrow{f}\VV_2$ such that $\eta_2\circ(f\otimes f)=\eta_1$ if and only if the parity of $\VV_1$ is the same as that of $\VV_2$.  It is sufficient, then, to show that $\dim\left(\End^1_{\tw(\DD(Q_{-2},W_d))}(M)/\Ker(b_{\tw(\DD(Q_{-2},W_d)),1})\right)$ is even if and only if $d\geq 2$.  This follows from the following equations:
\begin{align}
\dim\left(\Hom^0_{\tw(\DD(Q_{-2},W_d))}(M,M)\right)\equiv &\nn(0)^2+\nn(1)^2\equiv 1&\mbox{(modulo }2)\\
\dim\left(\Hom^1_{\tw(\DD(Q_{-2},W_d))}(M,M)\right)\equiv&4\nn(0)\nn(1)+\nn(0)^2+\nn(1)^2\equiv 1&\mbox{(modulo }2)\\
\label{conred}\dim\left(\Ext^0_{\tw(\DD(Q_{-2},W_d))}(M,M)\right)\equiv&1&\mbox{(modulo }2).
\end{align}
The first two identities follow from the definitions of homomorphism spaces in $\tw(\DD(Q_{-2},W_d))$, and the identity (\ref{conred}) follows from the fact that $M$ is stable and hence simple.  We then calculate, modulo 2
\begin{align*}
1\equiv&\dim\left(\Hom^1_{\tw(\DD(Q_{-2},W_d))}(M,M)\right)\\
\equiv&\dim\left(\Image(b_{\tw(\DD(Q_{-2},W_d)),0})\right)+\dim\left(\Ext^1_{\tw(\DD(Q_{-2},W_d))}(M,M)\right)+\\&+\dim\left(\End^1_{\tw(\DD(Q_{-2},W_d))}(M)/\Ker(b_{\tw(\DD(Q_{-2},W_d)),1})\right) \\
\equiv&\dim\left(\Hom^0_{\tw(\DD(Q_{-2},W_d))}(M,M)\right)-\dim\left(\Ext^0_{\tw(\DD(Q_{-2},W_d))}(M,M)\right)+\\&+\dim\left(\Ext^1_{\tw(\DD(Q_{-2},W_d))}(M,M)\right)+\dim\left(\End^1_{\tw(\DD(Q_{-2},W_d))}(M)/\Ker(b_{\tw(\DD(Q_{-2},W_d)),1})\right)\\
\equiv&\dim\left(\Ext^1_{\tw(\DD(Q_{-2},W_d))}(M,M)\right)+\dim\left(\End^1_{\tw(\DD(Q_{-2},W_d))}(M)/\Ker(b_{\tw(\DD(Q_{-2},W_d)),1})\right).
\end{align*}
Thus, 
\[
\dim\left(\End^1_{\tw(\DD(Q_{-2},W_d))}(M)/\Ker(b_{\tw(\DD(Q_{-2},W_d)),1})\right)\equiv\begin{cases}0 &\textrm{if }d\geq 2\\1&\textrm{otherwise.}\end{cases}
\]
\medbreak
Now one can see directly that the orientation data assigned to the unique simple object $s_0$ of the category $\tw_0(\DD(Q_L^1,X^{d+1}))$ is trivial if and only if $d\geq 2$, since 
\[
b_{1}\colon \End_{\tw(\DD(Q_L^1))}^1(s_0)\rightarrow \End_{\tw(\DD(Q_L^1))}^2(s_0)
\]
is trivial if $d\geq 2$, otherwise this differential is an isomorphism.  So as well as having a cyclic $A_{\infty}$-isomorphism $\Xi$ from the subcategory of $\tw_0(Q_{-2},W_d)$ generated by $M$ under extensions to the category $\tw_0(Q_L^1,W^{d+1})$, we have an isomorphism of orientation data $\Xi^*(\tau_{Q_L^1,X^{d+1}})\cong\tau_{Q_{-2},W_d}$.  It follows that 
\begin{align*}
\Phi_{Q_{-2},W_d}\left([\XX^{\nilp,\zeta-\sss}_{Q_{-2},W_d,\theta}]\right)=&\Phi_{Q_L^1,X^{d+1}}\left([\XX^{\nilp}_{Q_L^1,X^{d+1}}]\right)_{\ehat_a\mapsto\ehat_{a\nn}}
\\=&\Phi_{\BBS,Q_L^1,X^{d+1}}\left([\XX^{\nilp}_{Q_L^1,X^{d+1}}]\right)_{\ehat_a\mapsto\ehat_{a\nn}}
\\=&\Phi_{\BBS,Q_L^1,X^{d+1}}\left([\XX_{Q_L^1,X^{d+1}}]\right)_{\ehat_a\mapsto\ehat_{a\nn}}
\end{align*}
where for the penultimate equation we used Proposition \ref{comparisonthm}, and for the final equation we use the fact that all finite-dimensional $A_{Q_L^1,X^{d+1}}$-modules are nilpotent.  The desired equality is then \cite[Thm.6.2]{DM11a}.
\end{proof}
\begin{prop}
\label{diag}
There is an equation of generating series in $\overline{\KK}^{\muu}(\Sta/\Spec(\Cp))[\LL^{1/2}][[\ehat_{\nn},\nn\in \NN^{\verts(Q_{-2})}]]$
\[
\Phi_{Q_{-2},W_d}\left([\XX_{Q_{-2},W_d,\theta}^{\nilp,\zeta-\sss}]\right)=\Sym\left( \sum_{n\geq 1}\frac{\LL^{-1/2}+\LL^{-3/2}}{\LL^{1/2}-\LL^{-1/2}}\ehat_{(n,n)}\right)
\]
for $\arg(\zeta((1,1)))=\theta$.
\end{prop}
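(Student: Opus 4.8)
The plan is to follow the proof of Proposition \ref{offdiag} as closely as possible. By the Lemma, $[\XX^{\nilp,\zeta-\sss}_{Q_{-2},W_d}]$ is assembled by iterated extension from the stable nilpotent modules of slope $\zeta$, which, under the equivalence (\ref{ncDE}), are exactly the skyscrapers $\OO_p$ at the points $p$ of the exceptional curve $C\cong\PP^1$, each of dimension vector $(1,1)$. So everything reduces, as in \ref{offdiag}, to identifying the minimal potential $W_{\min}$ and the orientation data $\tau_{Q_{-2},W_d}$ along this $\PP^1$-family of stable objects. Since a skyscraper on a smooth variety has formal derived endomorphism algebra, with $\Ext^\bullet_{\DDb(\Coh Y_d)}(\OO_p,\OO_p)\cong\bigwedge^\bullet T_pY_d$ (so $\dim\Ext^0=\dim\Ext^3=1$, $\dim\Ext^1=\dim\Ext^2=3$), the potential $W_{\min}$ vanishes identically over the stable locus, and indeed over all of its Maurer--Cartan thickening $\Ext^1\cong\Cp^3$. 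This is the $d$-independent analogue of the role played by $X^{d+1}$ in \ref{offdiag}.

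For the orientation data, I would argue exactly as in \ref{offdiag}: from $\dim\Hom^0_{\tw(\DD(Q_{-2},W_d))}(\OO_p,\OO_p)=\nn(0)^2+\nn(1)^2$ and $\dim\Hom^1_{\tw(\DD(Q_{-2},W_d))}(\OO_p,\OO_p)=4\nn(0)\nn(1)+\nn(0)^2+\nn(1)^2$ at $\nn=(1,1)$, together with $\dim\Ext^0=1$ and $\dim\Ext^1=3$, the constructible bundle $\End^1_{\tw(\DD(Q_{-2},W_d))}(\OO_p)/\Ker(b_{\tw(\DD(Q_{-2},W_d)),1})$ has rank $6-(3+(2-1))=2$, which is even and, because $Y_d$ is smooth so $\Ext^1$ does not jump, constant over the whole slope-$\zeta$ locus. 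Its superdeterminant is therefore an even super line bundle whose tensor square is trivialised by $\langle b_{\tw(\DD(Q_{-2},W_d)),1}(\bullet),\bullet\rangle$, i.e.\ a $2$-torsion line bundle; and the only stacks involved ($\PP^1$ and affine spaces) carry no nontrivial $2$-torsion line bundles, so $\tau_{Q_{-2},W_d}$ is trivial along the slope-$\zeta$ locus for every $d$. (Contrast the $d=1$ versus $d\geq2$ dichotomy of \ref{offdiag}.)

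It follows that the cyclic $3$-Calabi--Yau category of slope-$\zeta$ semistable modules, equipped with $\tau_{Q_{-2},W_d}$, is --- up to the equivalences entering the integration map --- the same for all $d$, namely the slope-$\zeta$ part of the (noncommutative) resolved conifold, i.e.\ the case $d=1$ restricted to dimension vectors $(n,n)$ (Remark \ref{conifold}). Replacing $\Phi_{Q_{-2},W_d}$ by $\Phi_{\BBS,Q_{-2},W_d}$ on nilpotent modules via Proposition \ref{comparisonthm} --- a genuine geometric integral of $\phi_{\tr W_d}$, on which the contribution of $\tfrac1{d+1}(X^{d+1}-Y^{d+1})$ disappears because $X$ and $Y$ act nilpotently here --- and invoking the motivic Donaldson--Thomas computation for the conifold in the orientation-data convention of this paper (\cite{DM11}, cf.\ also \cite{conifold}, \cite{MMNS11}) yields
\[
\Phi_{Q_{-2},W_d}\left([\XX^{\nilp,\zeta-\sss}_{Q_{-2},W_d}]\right)=\Sym\left(\sum_{n\geq1}\frac{\LL^{-3/2}[\PP^1]}{\LL^{1/2}-\LL^{-1/2}}\ehat_{(n,n)}\right),
\]
which is the asserted formula since $\LL^{-3/2}[\PP^1]=\LL^{-3/2}(1+\LL)=\LL^{-1/2}+\LL^{-3/2}$.

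The hardest part is the reduction to the conifold model. One must check that the vanishing of $W_{\min}$ and the triviality of $\tau_{Q_{-2},W_d}$ genuinely propagate across the $\PP^1$-family and its Maurer--Cartan thickening, not merely fibrewise over a point, and that the cyclic $A_\infty$-structure governing iterated extensions of the $\OO_p$ --- the ``fat points'' along $C$ --- does not register the relation $X^d=CA-DB$ in a way that changes $\Phi$. Granting this, the output $\Omega^{\nilp}((n,n))=\LL^{-3/2}[\PP^1]$ for all $n\geq1$ (and not just for $n=1$, where the stable locus lives) is exactly the known conifold answer, and the remaining steps are the bookkeeping with $\Phi_{\BBS}$ and the motivic Thom--Sebastiani manipulations already set up above.
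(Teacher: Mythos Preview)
Your strategy is different from the paper's and, as written, has a genuine gap.  You try to show that the slope-$\zeta$ cyclic $A_\infty$ category, together with its orientation data, is the same for all $d$, and then invoke the conifold ($d=1$) answer from the literature.  The trouble is that no such answer exists in the literature with the orientation data used here: the paper itself notes that its conifold invariants differ from those of \cite{MMNS11} precisely because of the orientation data, \cite{conifold} is not motivic, and \cite{DM11} treats the one-loop quiver, not the conifold.  So even if your $d$-independence argument were airtight, you would still owe a computation of the $d=1$ case, which is exactly a special case of the proposition you are proving.  Your last paragraph concedes this is the hard part and grants it rather than proving it.

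A secondary issue: saying ``$W_{\min}$ vanishes identically'' is correct as a statement about the potential on $\Ext^1(\OO_p,\OO_p)$ for a single simple (because $\xi\wedge\xi=0$), but that is not what controls $\Phi$ on the whole slope-$\zeta$ stratum.  The analogue of $\tw_0(\DD(Q_L^1,X^{d+1}))$ in Proposition~\ref{offdiag} would here require a quiver whose vertices are the continuous family $\PP^1$, and the potential on $n$-fold extensions (e.g.\ $\tr(X[Y,Z])$ for $\Cp^3$) certainly does not vanish.  So the observation does not by itself reduce the problem to something known.

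The paper's proof is a direct computation, not a reduction to the conifold.  It passes to $\Phi_{\BBS}$ via Proposition~\ref{comparisonthm}, then uses that on the open locus where $A$ is invertible the quotient by one copy of $\Gl_{\Cp}(n)$ identifies the stack with a five-loop quiver stack on which $\tr(W_d)$ becomes $\tr(W_d^{\circ})$.  The explicit splitting
\[
W_d^{\circ}=XDB-XBD+(X-Y)\Bigl(BD-C+\tfrac{1}{d+1}X^{d}+X^{d-1}Y+\ldots+Y^d\Bigr)
\]
allows motivic Thom--Sebastiani to strip off a quadratic piece and reduce to the stack of pairs of commuting nilpotent matrices, whose generating series is known from \cite{FF60} and \cite{BBS}.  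A Hall-algebra factorisation into ``supported at $p$'' and ``supported away from $p$'' then gives the two summands $\LL^{-3/2}$ and $\LL^{-1/2}$.  This route never needs to know the conifold answer in advance and makes the $d$-independence visible through the coordinate change rather than assumed.
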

\begin{proof}
The simple stable nilpotent modules $M$ with dimension vector $(1,1)$ are given by choosing two linear maps $M(A)$ and $M(B)$, from $\Cp$ to $\Cp$, not both equal to zero.  These modules correspond to the structure sheaves of points on the exceptional curve $C_d\subset Y_d$ under the derived equivalence (\ref{ncDE}).  Let $\VY_{n}^{\circ}$ be the subscheme of $\prod_{a\in \edge(Q_{-2})}\Hom(\Cp^{\nn(t(a))},\Cp^{\nn(s(a))})$, for $\nn=(n,n)$, the points of which satisfy the condition that the linear map assigned to $A$ is an isomorphism, and the Harder--Narasimhan filtration of the associated module only contains modules with dimension vector $(1,1)$.  The action of $\Gl_{\Cp}(\nn(1))$ on $\VY_n^{\circ}$ is free.  Taking the quotient by this action corresponds to forgetting the data of the isomorphism $A$, and identifying the two vertices of the quiver $Q_{-2}$, and so 
\[
\VY_{n}^{\circ}/(\Gl_{\Cp}(n)\times\Gl_{\Cp}(n))\cong \YY_{Q^5_L,n}, 
\]
where $Q^5_L$ is the five loop quiver, with loops labelled $B,C,D,X,Y$.  Furthermore, under the open inclusion $\YY_{Q_L^5,n}\hookrightarrow \YY_{Q_{-2},(n,n)}$, the function $\tr(W_d)$ pulls back to the function $\tr(W_d^{\circ})$, where $W_{d}^{\circ}$ is the superpotential 
\[
W_d^{\circ}=\frac{1}{d+1}X^{d+1}-\frac{1}{d+1}Y^{d+1}-XC+XDB+YC-YBD.
\]
If we define $\XX^{\nilp,\zeta-\sss,\circ}_{Q_{-2},W_d,\nn}$ to be the substack of $\XX_{Q_{-2},W_d,\nn}$, the points of which are $\zeta$-semistable nilpotent $A_{Q_{-2},W_d}$-modules $M$ such that $\theta(A)$ is an isomorphism, then we have shown that $\XX^{\nilp,\zeta-\sss,\circ}_{Q_{-2},W_d,\nn}$ is naturally a substack of $\XX_{Q^5_L,W^{\circ}_d,n}\subset \YY_{Q^5_L,n}$, and we identify it as the stack of $n$-dimensional representations for the Jacobi algebra associated to $(Q^5_L,W_d^{\circ})$ such that all loops apart from $B$ act via nilpotent linear maps.  We denote
\begin{equation}
\label{xodef}
\XX^{\nilp,\zeta-\sss,\circ}_{Q_{-2},W_d}=\coprod_{n\in\mathbb{N}}\XX^{\nilp,\zeta-\sss,\circ}_{Q_{-2},W_d,(n,n)}.
\end{equation}
Under the derived equivalence (\ref{ncDE}) the stack (\ref{xodef}) is the substack of coherent sheaves supported on the exceptional curve $C_d\subset Y_d$, away from a fixed point.  We will denote this point by $p$.  
\medbreak
Denote by $\XX^{\nilp,\zeta-\sss,p}_{Q_{-2},W_d}$ the stack of modules for $A_{Q_{-2},W_d}$ which are supported at the point $p$ under the derived equivalence (\ref{ncDE}).  Then since every sheaf that is scheme-theoretically supported on $C_d$ with zero-dimensional support splits uniquely as a direct sum of a coherent sheaf supported at $p$ and a coherent sheaf supported away from $p$, there is an identity in the motivic Hall algebra $(\KK(\Sta/\XX_{Q_{-2},W_d}),\star_{\KS})$
\begin{equation}
\label{obvsplit}
[\XX^{\nilp,\zeta-\sss}_{Q_{-2},W_d,\theta}]=[\XX^{\nilp,\zeta-\sss,\circ}_{Q_{-2},W_d}]\star_{\KS} [\XX^{\nilp,\zeta-\sss,p}_{Q_{-2},W_d}].
\end{equation}
Now note that there is a splitting
\begin{equation*}
W_d^{\circ}=XDB-XBD+(X-Y)(BD-C+\frac{1}{d+1}(X^{d}+X^{d-1}Y+\ldots+Y^d)).
\end{equation*}
We deduce that after giving $Y_{Q^5_L,n}$ the coordinates $X,D,B, Y'=X-Y,C'=BD-C+\frac{1}{d+1}(X^{d}+X^{d-1}Y+\ldots+Y^d)$, we have
\begin{align}
\label{eq0} \Phi_{Q_{-2},W_d}(\XX^{\nilp,\zeta-\sss,\circ}_{Q_{-2},W_d})=&\sum_{n\geq 0}\left(\int_{\XX^{\nilp,\zeta-\sss,\circ}_{Q_{-2},W_d,(n,n)}\subset \YY_{Q_{-2},(n,n)}}\phi_{\tr(W_{d})}\right) \ehat_{(n,n)}
\\=& \sum_{n\geq 0}[\Gl_{\Cp}(n)]^{-1}\LL^{n^2/2}\left(\int_{\{X,Y',C' \mbox{ and }D \mbox{ nilpotent}\}\subset \VY_{Q^5_L,n}}
\phi_{\tr(XDB-XBD)\boxplus \tr(Y'C')}\right)\ehat_{(n,n)}\nonumber
\\ \label{eq1} =&\sum_{n\geq 0}[\Gl_{\Cp}(n)]^{-1}\LL^{n^2/2}\left(\int_{\{X \mbox{ and }D \mbox{ nilpotent}\}\subset \VY_{Q^3_L,n}} \phi_{\tr(XDB-XBD)}\right)\ehat_{(n,n)}.
\end{align}

Here (\ref{eq0}) comes from the comparison theorem (Theorem \ref{comparisonthm}), and (\ref{eq1}) comes from applying the motivic Thom-Sebastiani theorem.  Now, giving the coordinates $X,D,B$ weights $0$, $0$ and $1$ respectively, and applying the weight one version of Conjecture \ref{conjecture} (which is a theorem), with $Z'$ the scheme of pairs of matrices labelled by $X$ and $D$, we obtain
\begin{align}
\Phi_{Q_{-2},W_d}(\XX^{\nilp,\zeta-\sss,\circ}_{Q_{-2},W_d})=&\sum_{n\geq 0}[\Gl_{\Cp}(n)]^{-1}\LL^{-n^2} (\{X\mbox{ and } D\mbox{ nilpotent, }XD\neq DX\} (\LL^{n^2-1}-\LL^{n^2-1})-\\&-\{X\mbox{ and } D\mbox{ nilpotent, }XD=DX\}\LL^{n^2})\ehat_{(n,n)}\nonumber
\\  = &\sum_{n\geq 0}[\Gl_{\Cp}(n)]^{-1}\Comm_{n,\nilp}\ehat_{(n,n)}\nonumber
\\=&\sum_{n\geq 0}\CComm_{n,\nilp}\ehat_{(n,n)}\nonumber
\\ \label{eq3} =&\left(\sum_{n\geq 0}\CComm_n\ehat_{(n,n)}\right)^{-\LL^2}
\\ \label{eq4} =&\Sym\left(\sum_{n\geq 1} \frac{\LL^{-1/2}}{\LL^{1/2}-\LL^{-1/2}}\ehat_{(n,n)}\right)
\end{align}
where $\Comm_n$ is the variety of pairs of commuting $n\times n$ matrices, and $\CComm_n$ is its quotient under the conjugation action of $\Gl_{\Cp}(n)$.  One can think of this stack as the stack of length $n$ coherent sheaves on $\Cp^2$.  Above, $\Comm_{n, \nilp}$ and $\CComm_{n, \nilp}$ are the variety and stack, respectively, of nilpotent commuting matrices, the second of which one should think of as the stack of coherent sheaves on $\Cp^2$ scheme-theoretically supported at the origin.  Then (\ref{eq3}) follows from the definition of the power structure in Section \ref{lambdarings}, and (\ref{eq4}) follows from the main result of \cite{FF60}, as in \cite[Prop.1.1]{BBS}.
Similarly one deduces that 
\begin{align*}
\Phi_{Q_{-2},W_d}(\XX^{\nilp,\zeta-\sss,p}_{Q_{-2},W_d})=\Sym\left(\sum_{n\geq 1} \frac{\LL^{-3/2}}{\LL^{1/2}-\LL^{-1/2}}\ehat_{(n,n)}\right)
\end{align*}
and now the result follows from applying the integration map to the Hall algebra identity (\ref{obvsplit}).
\end{proof}
The following theorem now follows from applying the integration map to the Harder--Narasimhan identity (\ref{HNfilt}) in $\KK(\Sta/\XX^{\nilp}_{Q_{-2},W_d})$.
\begin{thm}
\label{mainthm}
There is an equality in $\overline{\KK}^{\muu}(\Sta/\Spec(\Cp))[\LL^{1/2}][[\ehat_{\nn},\nn\in \NN^{\verts(Q_{-2})}]]$:
\begin{equation*}
\Phi_{Q_{-2},W_d}(\XX^{\nilp}_{Q_{-2},W_d})=\Sym\left(\sum_{n\geq 0}\frac{\LL^{-1/2}(1-[\mu_{d+1}])}{\LL^{1/2}-\LL^{-1/2}}(\ehat_{(n,n+1)}+\ehat_{(n+1,n)})+\sum_{n\geq 1}\frac{\LL^{-1/2}+\LL^{-3/2}}{\LL^{1/2}-\LL^{-1/2}}\ehat_{(n,n)}\right).
\end{equation*}
In particular, the motivic Donaldson--Thomas invariants $\Omega_{\zeta}^{\nilp}$ counting nilpotent $A_{Q_{-2},W_d}$-modules (for any $\zeta$) are given by 
\begin{equation*}
\Omega^{\nilp}_{\zeta}(\nn)=\begin{cases} (1-[\mu_{d+1}])\cdot\LL^{-1/2}&\mbox{if there exists }n\in\NN \mbox{ such}\\ &\mbox{that }\nn=(n,n+1)\mbox{ or }\nn=(n+1,n),\\ \PP^1\cdot \LL^{-3/2} &\mbox{if there exists }n\in\NN\mbox{ such that }\nn=(n,n).
\end{cases}
\end{equation*}
\end{thm}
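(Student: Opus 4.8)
The plan is to apply the integration map $\Phi_{Q_{-2},W_d}$ to the Harder--Narasimhan identity (\ref{HNfilt}) and to evaluate the resulting factors using Propositions \ref{offdiag} and \ref{diag}.

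First I would recall from Theorem \ref{intmappres} that $\Phi_{Q_{-2},W_d}$ is a $\KK(\Sta/\Spec(\Cp))$-algebra homomorphism, so applying it to (\ref{HNfilt}) gives
\[
\Phi_{Q_{-2},W_d}\left([\XX^{\nilp}_{Q_{-2},W_d}]\right)=\prod_{\mbox{\scriptsize decreasing slope }\zeta}\Phi_{Q_{-2},W_d}\left([\XX^{\nilp,\zeta-\sss}_{Q_{-2},W_d}]\right),
\]
the infinite product being convergent since $\KK(\Sta/\XX^{\nilp}_{Q_{-2},W_d})$ is the completed Hall algebra and the target is complete for the $\ehat$-adic topology. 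Next I would pin down which slopes actually occur. By the Lemma preceding Proposition \ref{offdiag} together with the classification from \cite{NN} of the simple stable nilpotent modules, every stable nilpotent $A_{Q_{-2},W_d}$-module has dimension vector $(n,n+1)$, $(n+1,n)$ for some $n\geq 0$, or $(1,1)$, and a semistable module of a given slope is an iterated extension of stable modules of that slope. Under the genericity hypothesis on $\gamma$ the slopes $\arg(\gamma((n,n+1)))$ and $\arg(\gamma((n+1,n)))$ are pairwise distinct and distinct from $\arg(\gamma((1,1)))=\arg(\gamma((n,n)))$, so the product above has exactly one factor $\Phi_{Q_{-2},W_d}([\XX^{\nilp,\zeta-\sss}_{Q_{-2},W_d}])$ for each off-diagonal dimension vector, computed by Proposition \ref{offdiag}, together with a single diagonal factor accounting for all $(n,n)$ at once, computed by Proposition \ref{diag}.

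Substituting these values in, the right-hand side becomes a product of factors of the shape $\Sym(\cdots)$. Because $Q_{-2}$ is symmetric the multiplication $\ehat_{\mm'}\cdot\ehat_{\mm}=\ehat_{\mm'+\mm}$ on the target is commutative, so these factors may be reordered freely, and by the identity $\Sym(a)\Sym(b)=\Sym(a+b)$, valid in a complete filtered pre $\lambda$-ring — here the one described in Remark \ref{stinclusion} — they collapse into the single $\Sym$ appearing in the statement. Finally the formula for $\Omega^{\nilp}_{\gamma}(\nn)$ is obtained by comparing the coefficient of $\ehat_{\nn}$ inside this $\Sym$ with the definition of the motivic Donaldson--Thomas invariants, using $[\PP^1]\cdot\LL^{-3/2}=\LL^{-1/2}+\LL^{-3/2}$ in the diagonal case; independence of $\gamma$ is automatic, since the left-hand side $\Phi_{Q_{-2},W_d}(\XX^{\nilp}_{Q_{-2},W_d})$ carries no reference to $\gamma$.

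All of the substantive content — the shape of $W_{\min}$, the orientation data calculation producing the monodromy factor $1-[\mu_{d+1}]$, and the $\PP^1$ of points of $C_d$ — is already packaged inside Propositions \ref{offdiag} and \ref{diag}, so the only point left requiring care is the combinatorial one: checking that no spurious slopes contribute and that the infinite product genuinely telescopes into one $\Sym$. I expect this bookkeeping — in particular justifying for a generic $\gamma$ that the contributing slopes are exactly those listed, and that convergence legitimises commuting $\Phi_{Q_{-2},W_d}$ past the infinite product — to be the main, and only mild, obstacle.
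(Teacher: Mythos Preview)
Your proposal is correct and follows exactly the paper's approach: the paper's proof is a single sentence stating that the theorem follows from applying the integration map to the Harder--Narasimhan identity (\ref{HNfilt}), with the factors supplied by Propositions \ref{offdiag} and \ref{diag}. You have simply unpacked this one-liner in more detail, including the bookkeeping about which slopes contribute and the $\Sym$-multiplicativity, none of which the paper makes explicit.
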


\subsection{Calculation using equivariant vanishing cycles}
We repeat the above calculations, but this time the other side of the comparison theorem (Proposition \ref{comparisonthm}).  It is more natural there to work out the Donaldson--Thomas invariants for the category of finite-dimensional $A_{Q_{-2},W_d}$-modules, not just the nilpotent ones.  First we recall the following conjecture from \cite{DM11a}.
\begin{conj}
\label{conjecture}
Let $Z'$ be a smooth scheme with trivial $\GG_m$-action, and let $\AA_{\Cp}^n$ carry a $\GG_m$-action with nonnegative weights.  Let $Z=Z'\times \AA_{\Cp}^n$ with the induced $\GG_m$-action, and let $f\colon Z\rightarrow \AA_{\Cp}^1$ be a $\GG_m$-equivariant function, with $\GG_m$ acting on the target with weight $s>0$.  Then there is an equality in $\KK^{\muu}(\Sta/Z')$
\begin{equation}
\label{conjid}
q_*\phi_f=\LL^{-\dim(Z)/2}([f^{-1}(0)]-[f^{-1}(1)]),
\end{equation}
where $f^{-1}(0)$ carries the trivial $\muu$-action, and the $\muu$-action on $f^{-1}(1)$ is given by the natural $\mu_s$-action, and both are considered as varieties over $Z'$ via the projection $q\colon Z\rightarrow Z'$.  Equivalently, there is an identity in $\lim_{n\rightarrow\infty}\KK^{\GG_m,n}(\Sta/\AA^1_{Z'})$
\begin{equation}
\label{conjid2}
q_*\phi_f=\LL^{-\dim(Z)/2}[Z\xrightarrow{f\times q} \AA_{Z'}^1].
\end{equation}
\end{conj}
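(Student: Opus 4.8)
The plan is to prove the first identity, (\ref{conjid}); the second, (\ref{conjid2}), is then a formal consequence. The mechanism behind both is the observation that, $f$ being $\GG_m$-equivariant of weight $s>0$, sending a point $z\notin f^{-1}(0)$ to the pair $(\lambda^{-1}\cdot z,\lambda)$, with $\lambda$ any $s$-th root of $f(z)$ and $\lambda^{-1}\cdot z$ the $\GG_m$-action, identifies $Z\setminus f^{-1}(0)$ with $f^{-1}(1)\times_{\mu_s}\GG_m$ over $Z'\times\GG_m$, the $\mu_s$-action on $f^{-1}(1)$ being the restriction of the $\GG_m$-action. Splitting $[Z\xrightarrow{f\times q}\AA^1_{Z'}]$ into its part over $\{0\}$ and its part over $\GG_m$, this shows that the right-hand side of (\ref{conjid2}) is the image of the right-hand side of (\ref{conjid}) under the dictionary between $\KK^{\muu}(\Sta/Z')$ and $\lim_{n\rightarrow\infty}\KK^{\GG_m,n}(\Sta/\AA^1_{Z'})$ recalled in Subsection~\ref{lambdarings}, so that the two formulations are equivalent, and all the content lies in computing $q_*\phi_f$.

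Before computing, I would reduce the setup. By the scissor relations we may assume $Z'$ connected, and by stratifying it we may assume it smooth and affine. Absorbing the weight-zero coordinates of $\AA^n_{\Cp}$ into $Z'$, we may assume every weight $w_i>0$; then $f$ vanishes on the zero section $Z'\times\{0\}$, the relative vanishing cycle $\phi^{\rel}_f$ is defined and agrees with $q_*\phi_f$, and both sides of (\ref{conjid}) live over $Z'$. Just as in the construction of $\phi^{\rel}$ at the end of Subsection~\ref{vancycles}, the identity is compatible with the scissor relation, so by Noetherian induction it is enough to prove it after restricting to a dense open subscheme of $Z'$.

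The heart of the argument is then the computation of $q_*\phi_f$ — equivalently, by the definition $\phi_f=-Z_f^{\mathrm{eq}}(\infty)$, of the value at infinity of the equivariant motivic zeta function — for $f$ weighted homogeneous of weight $s$ with all weights positive. The plan is to pass to the weighted blow-up $\sigma\colon\widetilde Z\to Z$ of $Z'\times\{0\}$ with weights $(w_1,\dots,w_n)$: its exceptional divisor $E\cong Z'\times\PP(w_1,\dots,w_n)$ occurs in $\sigma^*(f^{-1}(0))$ with multiplicity exactly $s$, and off the strict transform $\widetilde{f^{-1}(0)}$ the open stratum $E^\circ:=E\setminus\widetilde{f^{-1}(0)}$ is canonically the $\GG_m$-quotient of $Z\setminus f^{-1}(0)$. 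After a further, toric and hence harmless, resolution of the quotient singularities of $\widetilde Z$ to reach a genuine log resolution, I would apply the A'Campo/Denef--Loeser formula expressing $Z_f^{\mathrm{eq}}(\infty)$ as an alternating $\ZZ[\LL]$-linear combination, over the strata of the exceptional divisors, of those strata twisted by the cyclic covers attached to the $\gcd$s of the boundary multiplicities through them. The decisive point is that the $\mu_s$-cover attached to $E$, restricted to $E^\circ$, is precisely $f^{-1}(1)$ with its $\mu_s$-action (the $s$-th roots of the residual function of $f$ along $E$); positivity of the weights guarantees that $E$ dominates $Z'$ and supplies the entire ``generic fibre'' term $-\LL^{-\dim(Z)/2}[f^{-1}(1)]$, while the strata supported on $\widetilde{f^{-1}(0)}$ and over $Z'\times\{0\}$ reassemble — using the relations defining the monodromic ring, in particular that affine-space fibrations with linear action are untwisted — into $\LL^{-\dim(Z)/2}[f^{-1}(0)]$. (Alternatively, one can bypass the resolution and evaluate $Z_f^{\mathrm{eq}}(T)$ directly from its arc-space definition, stratifying arcs over $f^{-1}(0)$ by the leading $\GG_m$-weight of their $\AA^n_{\Cp}$-coordinates and the vanishing order of $f$ along them, trivialising each stratum by the $\GG_m$-action on the arc space, summing the resulting geometric series and letting $T\to\infty$.)

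The main obstacle is controlling the $\muu$-equivariant structure throughout: one must verify that the cyclic cover attached to $E$ is \emph{exactly} $f^{-1}(1)$ with the action coming from the $\GG_m$-action at $s$-th roots of unity — not some twist of it — and that the remaining strata, which see the geometry of the possibly badly singular divisor $f^{-1}(0)$, contribute after the alternating sum only the untwisted class $[f^{-1}(0)]$ and otherwise cancel; this cancellation is exactly where the hypothesis that the $\GG_m$-weights are nonnegative (so that the action contracts $Z$ onto $Z'$ and there is no ``monodromy at infinity'') enters. A secondary nuisance is that the weighted blow-up $\widetilde Z$ is smooth only as a stack, forcing either a genuine further resolution or an orbifold version of the Denef--Loeser formula; in the weight-one case $\widetilde Z$ is already smooth and the argument collapses to the theorem invoked in the proof of Proposition~\ref{diag}.
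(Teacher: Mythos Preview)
The statement you are attempting to prove is stated in the paper as a \emph{conjecture}, not as a theorem; the paper offers no proof of it. The only remark the authors make is that, under the additional hypothesis that all the $\GG_m$-weights on $\AA_{\Cp}^n$ are at most one, the statement follows from the proof of Theorem~5.9 of \cite{DM11}. So there is no ``paper's own proof'' against which to compare your argument, and in the general case the result is genuinely open as far as this paper is concerned.

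As for your proposal itself: your discussion of the equivalence between (\ref{conjid}) and (\ref{conjid2}) is correct and matches the dictionary set up in Subsection~\ref{lambdarings}, and the reductions (absorbing weight-zero coordinates into $Z'$, Noetherian induction to a dense open) are sound. The weighted blow-up followed by the Denef--Loeser/A'Campo formula is the natural line of attack, and your identification of the $\mu_s$-cover over $E^\circ$ with $f^{-1}(1)$ is the right picture. But you have correctly located, and not closed, the real gap: showing that the contributions of the remaining strata --- those supported on the strict transform of $f^{-1}(0)$ and on the further exceptional divisors introduced to resolve the weighted projective space --- assemble to exactly $\LL^{-\dim(Z)/2}[f^{-1}(0)]$ with trivial $\muhat$-action. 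This is not a bookkeeping exercise: $f^{-1}(0)$ can be arbitrarily singular, the toric resolution of $\widetilde Z$ introduces many new components whose multiplicities are nontrivial combinations of $s$ and the $w_i$, and one must show that all the attendant cyclic covers and alternating signs conspire to leave only the untwisted class. Your alternative arc-space approach faces the same obstacle in different clothing: the strata of arcs whose leading term already lies in $f^{-1}(0)$ do not trivialise under the $\GG_m$-action in any obvious way. In the weight-$\le 1$ case these difficulties disappear (the blow-up is already smooth and the combinatorics collapse), which is precisely why that case is a theorem and the general one remains, in this paper, a conjecture.
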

This conjecture follows from the proof of Theorem 5.9 in \cite{DM11a}, under the assumption that the weights on $\AA_{\Cp}^n$ are all less than or equal to one.  If, in addition, $s=1$, then \cite[Thm.5.9]{DM11a} is a result of \cite[Prop.1.11]{BBS}.  While this paper was being prepared for publication, we were informed that Johannes Nicaise and Sam Payne have a strategy for proving the general case based on tropical geometry and Hrushovski-Kazhdan motivic integration.
\begin{thm}
For $d\leq 2$, there is an identity 
\[
\Phi_{\BBS,Q_{-2},W_d}\left([\XX_{Q_{-2},W_d}]\right)=\Sym\left(\sum_{n\geq 0}\frac{\LL^{-1/2}(1-[\mu_{d+1}])}{\LL^{1/2}-\LL^{-1/2}}(\ehat_{(n,n+1)}+\ehat_{(n+1,n)})+\sum_{n\geq 1}\frac{\LL^{3/2}+\LL^{1/2}}{\LL^{1/2}-\LL^{-1/2}}\ehat_{(n,n)}\right)
\]
in $\KK^{\muu}(\Sta/\Spec(\Cp))[\LL^{1/2}][[\ehat_{\nn},\nn\in \NN^{\verts(Q_{-2})}]]$.  Assuming Conjecture \ref{conjecture} this identity holds for all $d$.  It follows that the motivic Donaldson--Thomas invariants $\Omega_{\zeta}(\nn)$ (which do not depend on $\zeta$) are given by
\begin{equation*}
\Omega_{\zeta}(\nn)=\begin{cases} (1-[\mu_{d+1}])\cdot\LL^{-1/2}&\mbox{if there exists }n\in\NN \mbox{ such}\\ &\mbox{that }\nn=(n,n+1)\mbox{ or }\nn=(n+1,n),\\ [Y_d]_{\virt}:=\LL^{\frac{-\dim(Y_d)}{2}}\cdot [Y_d] &\mbox{if there exists }n\in\NN\mbox{ such that }\nn=(n,n).
\end{cases}
\end{equation*}
\end{thm}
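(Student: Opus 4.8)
The plan is to rerun the proofs of Propositions~\ref{offdiag} and~\ref{diag}, now on the full Hall algebra $\KK(\Sta/\XX_{Q_{-2},W_d})$ of all finite-dimensional modules and with the integration map $\Phi_{\BBS,Q_{-2},W_d}$ in place of $\Phi_{Q_{-2},W_d}$. First one applies $\Phi_{\BBS,Q_{-2},W_d}$ to the (non-nilpotent) Harder--Narasimhan identity $\prod_{\text{decreasing slope }\zeta}[\XX^{\zeta-\sss}_{Q_{-2},W_d}]=[\XX_{Q_{-2},W_d}]$ in $\KK(\Sta/\XX_{Q_{-2},W_d})$. Since $\Phi_{\BBS,Q_{-2},W_d}$ is a continuous $\KK(\Sta/\Spec(\Cp))$-algebra homomorphism on the whole Hall algebra (\cite{BBS}, \cite{COHA}) and the target ring is commutative, this reduces the statement to computing $\Phi_{\BBS,Q_{-2},W_d}([\XX^{\zeta-\sss}_{Q_{-2},W_d}])$ one slope at a time, the resulting infinite product of $\Sym$'s of linear terms being collapsed to a single $\Sym$ via $\sigma(a+b)=\sigma(a)\sigma(b)$.

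For an off-diagonal slope $\zeta=\arg(\gamma(\nn))$, with $\nn$ a positive multiple of $(n,n+1)$ or $(n+1,n)$, I claim that every $\zeta$-semistable module is nilpotent, so that $\XX^{\zeta-\sss}_{Q_{-2},W_d}=\XX^{\nilp,\zeta-\sss}_{Q_{-2},W_d}$. For generic $\gamma$ the only dimension vectors of slope $\zeta$ are the multiples of a single off-diagonal primitive vector, so such a module is an iterated extension of the unique stable module $S$ of that dimension vector; but any $0$-dimensional sheaf $\FF$ on $Y_d$ has $\dim\Hom(\OO_{Y_d},\FF)=\dim\Hom(\OO_{Y_d}(-1),\FF)$ (both line bundles being locally free) and hence diagonal dimension vector, so $S$ must be a genuine one-dimensional sheaf, which forces it to be supported on the proper curve $C_d$, hence nilpotent. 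The comparison theorem (Proposition~\ref{comparisonthm}) and Proposition~\ref{offdiag} then give that this slope contributes exactly $\Sym\!\left(\frac{\LL^{-1/2}(1-[\mu_{d+1}])}{\LL^{1/2}-\LL^{-1/2}}\ehat_{\nn}\right)$, unconditionally and for all $d$.

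The substantive point is the diagonal slope $\zeta_0=\arg(\gamma((1,1)))$, whose semistable modules are exactly the $0$-dimensional coherent sheaves on all of $Y_d$ --- the $(1,1)$-stable modules being the structure sheaves of points of $Y_d$, which now sweep out $Y_d$ rather than only $C_d$. Here one follows the proof of Proposition~\ref{diag} with the nilpotency constraints removed: decomposing a $0$-dimensional sheaf by support, using the two affine charts $U_1\cong\AA^3_{\Cp}$, $U_2\cong\AA^3_{\Cp}$, factors $[\XX^{\zeta_0-\sss}_{Q_{-2},W_d}]$ in the Hall algebra over the strata of a cellular stratification of $Y_d$, and on each piece one carries out the same reduction as in Proposition~\ref{diag} (on the locus where the map attached to $A$ is invertible, pass to the five-loop quiver with potential $W_d^{\circ}$, use the splitting~(\ref{splitting2}) and the motivic Thom--Sebastiani theorem to strip off the nondegenerate factor $\tr(Y'C')$, then apply Conjecture~\ref{conjecture} to $\phi_{\tr(XDB-XBD)}$), but now over the \emph{full} commuting variety $\Comm_n$ rather than its nilpotent locus $\Comm_{n,\nilp}$. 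The conclusion is that a closed point of the smooth Calabi--Yau threefold $Y_d$ contributes the motivic weight $\LL^{-\dim(Y_d)/2}$ wherever it lies, so by additivity of motives the piece supported on a stratum $C$ contributes $\Sym\!\left(\sum_{n\ge1}\frac{\LL^{-\dim(Y_d)/2}[C]}{\LL^{1/2}-\LL^{-1/2}}\ehat_{(n,n)}\right)$ and the product over $C$ yields the diagonal factor $\Sym\!\left(\sum_{n\ge1}\frac{\LL^{-\dim(Y_d)/2}[Y_d]}{\LL^{1/2}-\LL^{-1/2}}\ehat_{(n,n)}\right)=\Sym\!\left(\sum_{n\ge1}\frac{\LL^{3/2}+\LL^{1/2}}{\LL^{1/2}-\LL^{-1/2}}\ehat_{(n,n)}\right)$, using $[Y_d]=(\LL+1)\LL^2$ and $\dim(Y_d)=3$. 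Assembling the slopes gives the stated identity, and reading off the motivic Donaldson--Thomas invariants from their definition then gives $\Omega_{\gamma}((n,n))=\LL^{-\dim(Y_d)/2}[Y_d]=[Y_d]_{\virt}$ and $\Omega_{\gamma}((n,n+1))=\Omega_\gamma((n+1,n))=(1-[\mu_{d+1}])\LL^{-1/2}$.

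The step I expect to be the main obstacle is the computation on the strata lying off the exceptional curve. There the relevant part of $\tr(W_d)$ involves the $(d+1)$-st order terms $X^{d+1},Y^{d+1}$ of $W_d$ and the $d$-th power $y_2^{d}$ from the transition functions, and the natural $\GG_m$-equivariant presentation needed in order to invoke Conjecture~\ref{conjecture} then carries a weight exceeding $1$ as soon as $d\ge3$. Since Conjecture~\ref{conjecture} is established only in weight $\le1$, this is precisely where the hypothesis enters: for $d=1$ the transition functions are linear, and for $d=2$ the relevant weights are still $\le1$, so the argument is unconditional there, whereas for $d\ge3$ one genuinely needs Conjecture~\ref{conjecture} in higher weight. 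The remaining ingredients --- that the support decomposition, the chart identifications, the reduction to $(Q_5^L,W_d^{\circ})$, and the passage to commuting-matrix counts all survive the removal of the nilpotency hypotheses, exactly as in Proposition~\ref{diag} --- are routine.
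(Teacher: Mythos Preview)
Your overall strategy --- applying the Harder--Narasimhan identity in the full Hall algebra of $A_{Q_{-2},W_d}$-modules and then integrating slope by slope with $\Phi_{\BBS}$ --- is \emph{not} what the paper does.  The paper's proof bypasses the Hall algebra of $A_{Q_{-2},W_d}$-modules entirely.  It first gives $Y_{Q_{-2},\nn}$ a $\GG_m$-action with weight $1$ on $X,Y,A,B$ and weight $d-1$ on $C,D$, so that $\tr(W_d)$ is equivariant of weight $d+1$, and applies Conjecture~\ref{conjecture} once, globally, to replace $\int\phi_{\tr(W_d)}$ by the naive class $[Y_{Q_{-2},\nn}\xrightarrow{\tr(W_d)}\AA^1_{\Cp}]$ (this is precisely where $d\le 2$ enters: the weights $d-1$ on $C,D$ must be $\le 1$).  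From there it ``integrates out'' $C$ and $D$ linearly to land on the stack $\EE_{Q_{\kron},\nn}$ of pairs (Kronecker module, endomorphism), and only then decomposes by Harder--Narasimhan type --- but for the \emph{Kronecker} quiver, not for $A_{Q_{-2},W_d}$.  The off-diagonal pieces become $[\Mat_{a\times a}/\Gl_a\xrightarrow{\tr(X^{d+1})}\AA^1]$, and the diagonal piece becomes sheaves on $\PP^1$ with endomorphism; both are then evaluated using \cite{DM11} and \cite{COHA}.

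Your argument is plausible but the place you locate the hypothesis $d\le 2$ is not convincing.  For the off-diagonal slopes your reduction to the nilpotent case and Proposition~\ref{offdiag} is unconditional, as you say.  But for the diagonal slope, if you genuinely follow Proposition~\ref{diag} --- pass to $Q^L_5$, use the splitting (\ref{splitting2}) and Thom--Sebastiani to strip $\tr(Y'C')$, then apply the weight-one case of Conjecture~\ref{conjecture} to $\tr(XDB-XBD)$ --- none of these steps sees $d$ at all: the polynomial change of variables absorbs the $d$-dependence, and the remaining function is cubic with weight-one equivariance.  So your claimed obstruction ``off the exceptional curve, the transition functions involve $y_2^d$'' does not arise in the actual computation you outline.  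Either your route proves the identity unconditionally (which is consistent with the closing Remark of the paper), or there is a genuine gap you haven't identified --- most likely in checking that the semistability locus for slope $\arg\gamma((1,1))$ really matches, after the reduction, the \emph{full} $Y_{Q_3^L}$ (and on the complementary chart the analogous statement), and that the support decomposition over the two charts gives exactly $\LL^{3/2}+\LL^{1/2}$.  You should pin this down rather than attribute the restriction to a step that does not in fact use it.
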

The explicit description given in Section 2 shows that $Y_d$ is a Zariski locally trivial fibre bundle over the exceptional curve $C_d\cong \mathbb{P}^1_{\Cp}$ with fibre $\AA^2_{\Cp}$, and so 
\begin{align*}
[Y_d]=&(\LL^1+1)\LL^2\\
[Y_d]_{\virt}=&\LL^{3/2}+\LL^{1/2}
\end{align*}
in $\KK^{\muhat}(\Sta/\Spec(\Cp))$. The transition functions are linear only for $d=1$.  In particular, although $Y_{d}\ncong Y_{d'}$ for $d\neq d'$, the classes $[Y_d]$ and $[Y_d]_{\virt}$ do not depend on $d$.
\begin{proof}
For $\beta=(\beta_n,\ldots,\beta_1)$ a path in a quiver $Q$, and for
\[
M\in \VY_{Q,\nn}:=\prod_{a\in \edge(Q)}\Hom(\Cp^{\nn(t(a))},\Cp^{\nn(s(a))})
\]
we write $M(\beta)=M(\beta_n)\circ\ldots\circ M(\beta_1)$.  We let $\GG_m$ act on $\VY_{Q_{-2},\nn}$ via 
\[
(z\cdot M)(E)=z^{\iota(E)}\cdot M(E),
\]
where $\iota(E)=1$ if $E=X,Y,A,B$, and $\iota(E)=d-1$ if $E$=$C,D$.  Then 
\[
\tr(W_d)\colon Y_{Q_{-2},\nn}\rightarrow\AA_{\Cp}^1
\]
is $\GG_m$-equivariant, after giving $\AA_{\Cp}^1$ the weight $(d+1)$-action.  It follows from our assumption $d\leq 2$, or from Conjecture \ref{conjecture}, for general $d$, that 
\begin{equation}
\label{c5u}
\int_{\XX_{Q_{-2},W_d},\nn} \phi_{\tr(W_d)}=\int_{\YY_{Q_{-2}},\nn} \phi_{\tr(W_d)}=[\VY_{Q_{-2},\nn}\xrightarrow{\tr(W_d)}\AA_{\Cp}^1]\cdot\LL^{-2\nn(0)\nn(1)}\cdot [\Gl_{\Cp}(\nn(0))\times\Gl_{\Cp}(\nn(1))]^{-1}
\end{equation}
in $\KK^{\GG_m,d+1}(\Sta/\AA_{\Cp}^1)$ (in fact all subsequent calculations will take place in this ring).  The first of these equalities follows from the fact that the motive $\phi_{\tr(W_d)}$ is supported on the critical locus of $\tr(W_d)$, which is just $\XX_{Q_{-2},W_d,\nn}$.\medbreak
For a set of edges $E'\subset E(Q)$ let $Q\setminus E'$ be the quiver obtained by deleting the edges of $E'$ (this quiver has the same vertex set as $Q$).  If $W$ is a potential on $\mathbb{C} Q$, we denote by $W\setminus E'$ the potential on $Q\setminus E'$ obtained by changing the coefficient of any term in $W$ containing any edge of $E'$ to zero.  By abuse of notation we will often denote the potential $W\setminus E'$ on $Q\setminus E'$ by $W$.  There is a natural projection 
\[
\pi_C\colon \VY_{Q_{-2},\nn}\rightarrow \VY_{Q_{-2}\setminus \{C\},\nn}
\]
given by forgetting the data $M(C)$.  We consider this as the projection from the total space of a rank $\nn(0)\cdot\nn(1)$ vector bundle which we denote $\tilde{C}$.  There is an obvious equality
\[
[\VY_{Q_{-2},\nn}\xrightarrow{\tr(W_d)} \AA_{\Cp}^1]=[\pi_C^{-1} \VY_{Q_{-2}\setminus \{C\},\nn,M(AX)=M(YA)}\xrightarrow{\tr(W_d)}\AA_{\Cp}^1]+[\pi_C^{-1} \VY_{Q_{-2}\setminus \{C\},\nn,M(AX)\neq M(YA)}\xrightarrow{\tr(W_d)}\AA_{\Cp}^1].
\]
The restriction of the vector bundle $\tilde{C}$ to $\VY_{Q_{-2}\setminus \{C\},\nn,M(AX)\neq M(YA)}$ has a rank $(\nn(0)\cdot\nn(1)-1)$ sub-bundle $\tilde{C}_0$, given by those choices of $M(C)$ such that $\tr(M(CAX)-M(CYA))=0$.  The action of $\GG_m$ on $\VY_{Q_{-2}\setminus \{C\},\nn,M(AX)\neq M(YA)}$ is free, and from the corresponding non-equivariant statement on the quotient we deduce that after $\GG_m$-equivariant constructible decomposition of the base $\VY_{Q_{-2}\setminus \{C\},\nn,M(AX)\neq M(YA)}$, the inclusion $\tilde{C}_0\subset \tilde{C}|_{\VY_{Q_{-2}\setminus \{C\},\nn,M(AX)\neq M(YA)}}$ splits, and we may write
\begin{equation}
\label{cdec}
[\pi_C^{-1} \VY_{Q_{-2}\setminus \{C\},\nn,M(AX)\neq M(YA)}\xrightarrow{\tr(W_d)}\AA_{\Cp}^1]=[\tilde{C}_0\times\AA_{\Cp}^1\xrightarrow{\tr(W_d\setminus C) + \pi_{\AA_{\Cp}^1}}\AA_{\Cp}^1]
\end{equation}
where we have abused notation by identifying $\tilde{C}_0$ with its constructible decomposition.  After a change of coordinates we may write the right hand side of (\ref{cdec}) as $[\tilde{C}_0\times\AA_{\Cp}^1\xrightarrow{\pi_{\AA_{\Cp}^1}}\AA_{\Cp}^1]$.  Clearly this belongs to $\mathfrak{I}_{d+1}$.  We deduce that 
\begin{align*}
[\VY_{Q_{-2},\nn}\xrightarrow{\tr(W_d)} \AA_{\Cp}^1]=&[\pi_C^{-1}\VY_{Q_{-2}\setminus \{C\},\nn,M(AX)=M(YA)}\xrightarrow{\tr(W_d)}\AA_{\Cp}^1]
\\=&\LL^{\nn(0)\cdot\nn(1)}\cdot [\VY_{Q_{-2}\setminus \{C\},\nn,M(AX)=M(YA)}\xrightarrow{\tr(W_d\setminus C)}\AA_{\Cp}^1]
\end{align*}
and similarly
\begin{align}
\label{kronappear}
[\VY_{Q_{-2},\nn}\xrightarrow{\tr(W_d)} \AA_{\Cp}^1]=&\LL^{2\nn(0)\nn(1)}\cdot [E_{Q_{\kron},\nn}\xrightarrow{\tr(X^{d+1}-Y^{d+1})}\AA_{\Cp}^1].
\end{align}
where we define
\begin{align*}
E_{Q_{\kron},\nn}:=&(\VY_{Q_{-2}\setminus \{C,D\},\nn})|_{\substack{M(AX)=M(YA)\\M(BX)=M(YB)}}
\end{align*}
and define the stacks 
\begin{align*}
\EE_{Q_{\kron},\nn}:=&E_{Q_{\kron},\nn}/\left(\Gl_{\Cp}(\nn(0))\times \Gl_{\Cp}(\nn(1))\right)\\
\EE_{Q_{\kron}}:=&\coprod_{\nn\in\NN^2}\EE_{Q_{\kron},\nn}.
\end{align*}
These stacks represent pairs $(M,\xi)$, where $M$ is a right $A_{Q_{\kron}}$-module, and $\xi=X + Y$ is an endomorphism of $M$, where $Q_{\kron}$ is the Kronecker quiver with two vertices $x_0$ and $x_1$, and two arrows $A,B$, both going from $x_0$ to $x_1$.   In other words, $\EE_{Q_{\kron}}$ is the stack of finite-dimensional $\Bkron:=A_{Q_{\kron}}[z]$-modules.  By Beilinson's theorem $\Der(\rmod A_{Q_{\kron}})$ is derived equivalent to the category of coherent sheaves on $\mathbb{P}^1$ via the derived equivalence $\RHom(E,-)$, where 
\[
E=\mathcal{O}_{\mathbb{P}^1}\oplus \mathcal{O}_{\mathbb{P}^1}(1).
\]
Similarly, $\Der(\rmod\Bkron)$ is derived equivalent to the category of coherent sheaves on $\Tot(\mathcal{O}_{\mathbb{P}^1})$ via the derived equivalence $\RHom(\pi^*E,-)$, where 
\[
\pi\colon \Tot(\mathcal{O}_{\mathbb{P}^1})\rightarrow \mathbb{P}^1
\]
is the projection.  We claim that for $M_{\alpha}$ and $M_{\beta}$ two semistable $\Bkron$-modules with the slope of $M_{\alpha}$ lower than that of $M_{\beta}$, the following group vanishes
\[
\Ext^2_{\rmod \Bkron}(M_{\alpha},M_{\beta})=0.
\]
By the five lemma and the existence of Jordan--H\"older filtrations it is enough to prove the claim in the case in which both $M_{\alpha}$ and $M_{\beta}$ are stable.  By Serre duality, and the above derived equivalence, this is equivalent to showing that 
\[
\Hom(\mathcal{F}_1,\mathcal{F}_2(-2))=0
\]
for $\mathcal{F}_1$ occurring before $\mathcal{F}_2$ in the ordered collection of objects of $\Der(\Coh(\mathbb{P}^1))$:
\[
\mathcal{O}_{\mathbb{P}^1}(-1)[1],\mathcal{O}_{\mathbb{P}^1}(-2)[1],\ldots, \mathcal{O}_{\pt},\ldots, \mathcal{O}_{\mathbb{P}^1}(1),\mathcal{O}_{\mathbb{P}^1}
\]
which is clear.

Let $\aleph_{\zeta}$ be the set of all possible Harder--Narasimhan types of finite-dimensional $\Bkron$-modules.  We could equally have defined $\aleph_{\zeta}$ as the set of all possible Harder--Narasimhan types of $A_{Q_{\kron}}$-modules, since the endomorphism $z$ in the definition of $\Bkron$ preserves the Harder--Narasimhan filtration of the underlying $A_{Q_{\kron}}$-module.  Let $\gamma=(\nn^1,\ldots,\nn^s)\in\aleph_{\zeta}$, let
\[
\EE_{Q_{\kron},\gamma}
\]
be the stack of $\Bkron$-modules of Harder--Narasimhan type $\gamma$, and let 
\[
\JH:\EE_{Q_{\kron},\gamma}\rightarrow \prod_{i\leq s}\EE^{\zeta-\sss}_{Q_{\kron},\nn^i}
\]
be the map taking a module to its Jordan--H\"older filtration.  Then above a module $M=M_1\oplus\ldots \oplus M_s$, the fibre of $\JH$ is given by a stack $[\AA^{m}/\AA^n]$, where
\begin{align*}
n=&\sum_{1\leq i<j\leq s}\dim(\Hom(M_j,M_i))\\
m=&\sum_{1\leq i<j\leq s}\Ext^1(\Hom(M_j,M_i)).
\end{align*}
On the other hand, by the vanishing of $\Ext^2$s, and the fact that the Euler form on $\Coh_{\mathrm{cpct}}(\Tot(\mathcal{O}_{\mathbb{P}^1}))$ vanishes, each of the differences $\dim(\Hom(M_j,M_i))-\Ext^1(\Hom(M_j,M_i))$ vanishes, and so $n=m$.  We deduce from relation (\ref{rel1}) that
\begin{align}
\label{fixprop}
[\EE_{Q_{\kron},\gamma}\xrightarrow{\tr(W_d)}\AA_{\Cp}^1]=\prod_{0\leq i\leq s}[\EE_{Q_{\kron},\nn^i}^{\zeta-\sss}\xrightarrow{\tr(W_d)}\AA_{\Cp}^1].
\end{align}
If $\nn^i$ is equal to $a\cdot(n,n\pm 1)$ then 
\begin{align*}
[\EE^{\zeta-\sss}_{Q_{\kron},\nn^i}\xrightarrow{\tr(W_d)}\AA_{\Cp}^1]=&[\Mat_{a\times a}(\Cp)/\Gl_{\Cp}(a)\xrightarrow{\tr(nX^{d+1}-(n\pm 1)X^{d+1})}\AA_{\Cp}^1]
\\=&[\Mat_{a\times a}(\Cp)/\Gl_{\Cp}(a)\xrightarrow{\tr(X^{d+1})}\AA_{\Cp}^1].
\end{align*}
Similarly, if $\nn^i=a\cdot(1,1)$, then the function $\tr(W_d)$ is zero, restricted to $\EE^{\zeta-\sss}_{Q_{\kron},\nn^i}$.  This stack is just the stack of length $a$ coherent zero-dimensional sheaves on $\PP^1$ with an endomorphism.  It follows that
\[
\sum_{a\geq 0}[\EE^{\zeta-\sss}_{Q_{\kron},((a,a))}\rightarrow \AA_{\Cp}^1]\ehat_{(a,a)}=(\sum_{a\geq 0}[\EE^{\mathbb{C}}_{Q_{\kron},((a,a))}\rightarrow \AA_{\Cp}^1]\ehat_{(a,a)})^{\PP^1},
\]
where $\EE^{\mathbb{C}}_{Q_{\kron},((a,a))}$ is the stack of pairs $(M,\xi)$, where $M$ is a coherent $\OO_{\PP^1}$-module supported at zero, and $\xi$ is an endomorphism of $M$.  This is just the stack of pairs of commuting matrices $N_1$ and $N_2$ such that $N_1$ is nilpotent, which in turn is the stack of coherent sheaves on $\mathbb{C}^2$ scheme-theoretically supported on zero dimensional subschemes of a fixed coordinate line.  As in \cite[Sec.5.6]{COHA} one deduces that 
\begin{equation}
\sum_{a\geq 0}[\EE^{\zeta-\sss}_{Q_{\kron},((a,a))}\rightarrow \AA_{\Cp}^1]\ehat_{(a,a)}=\Sym\left(\sum_{n\geq 1}\frac{\LL^{3/2}+\LL^{1/2}}{\LL^{1/2}-\LL^{-1/2}}\ehat_{(n,n)}\right).
\end{equation}
Finally, putting all this together, we have
\begin{align*}
\Phi_{\BBS,Q_{-2},W_d}\left([\XX_{Q_{-2},W_d}]\right)=&\sum_{\nn\in\NN^{\verts(Q_{-2})}}\int_{\YY_{Q_{-2},\nn}}\phi_{\tr(W_d)}\ehat_{\nn}
\\=&\sum_{\nn\in\NN^{\verts(Q_{-2})}}\LL^{-2\nn(0)\cdot\nn(1)}[\VY_{Q_{-2},\nn}\xrightarrow{\tr(W_d)}\AA_{\Cp}^1]\cdot [\Gl_{\Cp}(\nn(0))\times\Gl_{\Cp}(\nn(1))]^{-1}\ehat_{\nn}
\\=&\sum_{\nn\in\NN^{\verts(Q_{-2})}}[\EE_{Q_{\kron},\nn}\xrightarrow{\tr(W_d)}\AA_{\Cp}^1]\ehat_{\nn}
\\=&\sum_{\gamma\in\aleph_{\zeta}}[\EE_{Q_{\kron},\gamma}\xrightarrow{\tr(W_d)}\AA_{\Cp}^1]\ehat_{\nn}
\\=&\prod_{\nn=(n,n\pm1)}\left(\sum_{a\geq 0}[\Mat_{a\times a}(\Cp)/\Gl_{\Cp}(a)\xrightarrow{\tr(X^{d+1})}\AA_{\Cp}^1]\ehat_{a \nn}\right)\cdot \Sym\left(\sum_{n\geq 1}\frac{\LL^{3/2}+\LL^{1/2}}{\LL^{1/2}-\LL^{-1/2}}\ehat_{(n,n)}\right)
\\=&\Sym\left(\sum_{\nn=(n,n\pm 1)}[\AA_{\Cp}^1\xrightarrow{z\mapsto z^{d+1}}\AA_{\Cp}^1]\cdot \LL^{-1/2}(\LL^{1/2}-\LL^{-1/2})^{-1}\ehat_{\nn}\right)\cdot\\ \cdot&\Sym\left(\sum_{n\geq 1} \frac{\LL^{3/2}+\LL^{1/2}}{\LL^{1/2}-\LL^{-1/2}}\ehat_{(n,n)}\right),
\end{align*}
where for the final equality we have again used the calculation of the motivic Donaldson--Thomas invariants for the one loop quiver with homogeneous potential from \cite[Thm.6.2]{DM11a}, and we are done.
\end{proof}

\begin{rem}
It is possible to give the category of (not necessarily nilpotent) $A_{Q_{-2},W_d}$-modules the structure of a cyclic $A_{\infty}$-category, and prove the above result in this framework, for arbitrary $d$.
\end{rem}
\begin{rem}
\label{conremark}
We return to the case $d=1$ and the comparison with the calculations of \cite{conifold, MMNS11}.  There, an alternative quiver with potential $(Q_{\con},W_{\KW})$ is used to give a Jacobi algebra presentation of the noncommutative resolution of $X_d$, where $Q_{\con}$ is the subquiver of $Q_{-2}$ depicted in Figure \ref{KWquiver} and 
\[
W_{\KW}=CADB-CBDA.
\]
Then it is not hard to see that the inclusion of algebras $\mathbb{C} Q_{\con}\hookrightarrow \mathbb{C} Q_{-2}$ induced by the inclusion of quivers $Q_{\con}\subset Q_{-2}$ induces an isomorphism of algebras $A_{Q_{\con},W_{\KW}}\cong A_{Q_{-2},W_1}$, and so we have \textit{two} derived equivalences
\[
\xymatrix{
\DDb(\rmod A_{Q_{-2},W_1})\ar[r]^-{\Delta_1}&\DDb(\Coh(Y_d))&\DDb(\rmod A_{Q_{\con},W_{\KW}})\ar[l]_-{\Delta_2}.
}
\]
Consider the sheaf $\mathcal{O}_{C_1}$, via either derived equivalence it corresponds to the unique simple object of dimension vector $(1,0)$.  The motivic DT invariant for this dimension vector, calculated in the category $\rmod A_{Q_{-2},W_1}$, with the natural orientation data coming from the presentation of this algebra as a Jacobi algebra, is $(1-[\mu_2])\LL^{-1/2}$. On the other hand, the motivic DT invariant for this dimension vector, calculated in the category $A_{Q_{\con},W_{\KW}}$, is given by calculating the motivic vanishing cycle of the function $\tr(W_{\KW})$ on the space $\pt$ --- the space parametrising $(1,0)$-dimensional representations of the quiver $Q_{\con}$.  This function is zero, and so we have $[\phi_{\tr(W_{\KW})|_{\pt}}]=[\pt]=1\neq (1-[\mu_2])\LL^{-1/2}$.  The difference is accounted for by the difference in natural orientation data coming from the two presentations of the noncommutative resolution as a Jacobi algebra.
\end{rem}
\medbreak
\begin{figure}
\centering
\input{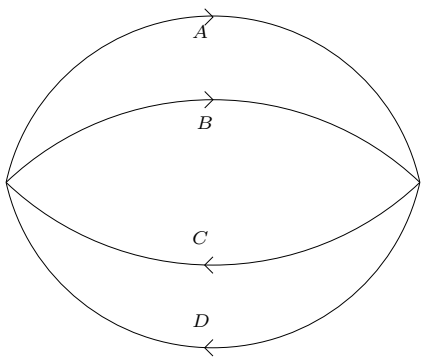}
\caption{The quiver $Q_{\con}$.}
\label{KWquiver}
\end{figure}

\begin{rem}
Restricting the derived equivalence (\ref{ncDE}), there are derived equivalences
\begin{align*}
\Dern(\rmod A_{Q_{-2},W_d})\cong&\Derbe(\Coh(Y_d)),\\
\Derf(\rmod A_{Q_{-2},W_d})\cong&\Dercpct(\Coh(Y_d))
\end{align*}
where $\Dercpct(\Coh(Y_d))$ is the subcategory of the bounded derived category of coherent sheaves on $Y_d$ with compactly supported total cohomology, while $\Derbe(\Coh(Y_d))$ is the subcategory of coherent sheaves with set-theoretic support contained in the exceptional curve $C_d$.  This is the explanation for the fact that 
\[
\Omega^{\nilp}_{\zeta}(\nn)=\Omega_{\zeta}(\nn)
\]
for all $\nn$ not counting point sheaves under the derived equivalence (\ref{ncDE}), as $C_d$ is the only proper subvariety of $Y_d$ of dimension greater than zero.
\end{rem}
\begin{rem}
Let $\nn=(m,n)$ with $m,n\in\mathbb{N}$ and $m=n\pm 1$.  Then the numerical DT invariant $\omega_{\zeta}(\nn)$ is extracted from our motivic DT invariant by first taking the Hodge spectrum 
\begin{align*}
\SP(\Omega_{\zeta}(\nn))=&(\sum_{l=1,d}u^{\frac{l}{d+1}}v^{\frac{d+1-l}{d+1}})u^{-1/2}v^{-1/2}\\
=&\sum_{l=1,d}u^{\frac{2l-d-1}{2d+2}}v^{\frac{d+1-2l}{2d+2}}
\end{align*}
and then replacing $u$ and $v$ with $q$ and setting $q^{1/2}=1$.  The Hodge spectrum is as defined and discussed in \cite[Sec.4.3]{KS}; in general for a $\mu_{d+1}$ equivariant variety $X$, the expression $\SP([X])$ is given by taking the usual mixed Hodge polynomial of the compactly supported cohomology of $X$, and multiplying the summand with eigenvalue $\exp(\alpha i/(d+1))$ under the monodromy action by $u^{\alpha/(d+1)}v^{(d+1-\alpha)/(d+1)}$ if $\alpha\neq 0$, and by $1$ otherwise.  In particular, the operation of taking the Hodge spectrum of a $\muhat$-equivariant motive $X$ and setting $u=v=q^{1/2}=1$ is the same as taking the Euler characteristic of $X$ (forgetting monodromy).  
\medbreak
We deduce that the numerical BPS contribution from the curve $C_d$ is precisely $d$, in agreement with the BPS contribution as defined and calculated in \cite[Thm.1.5]{BrKaLe} in the context of Gromov--Witten theory.  There, the crucial tool is the deformation invariance of Gromov--Witten invariants.  Probably one could derive the numerical version of our result on the contribution of the curve $C_d$ by deforming it to $d$ $(-1,-1)$ curves as in \cite{BrKaLe} and interpreting the calculation of \cite{Dominos} (see \cite[Thm.2.7.2]{conifold}) as stating that the numerical specialization of the motivic contribution of a $(-1,-1)$-curve is $1$.  On the other hand, again with reference to \cite[Thm.2.7.2]{conifold}, we see that deformation invariance of the BPS contribution fails at the motivic level, and even at the level of the Hodge spectrum, as all of the invariants of [ibid] lie inside the subring $\overline{\KK}(\Sta/\Spec(\Cp))[\LL^{1/2}]\subset \overline{\KK}^{\muu}(\Sta/\Spec(\Cp))[\LL^{1/2}]$ of monodromy-free motives, while our calculation of $\Omega_{\zeta}(\nn)$ has nontrivial monodromy.
\end{rem}

\bibliographystyle{plain}
\bibliography{SPP}

\vfill
\textsc{\small B. Davison: IMJ at Universit\'{e} Paris 7, 16 Rue Clisson, 75013 Paris, France}\\
\textit{\small E-mail address:} \texttt{\small davison@math.jussieu.fr}\\
\\

\textsc{\small S. Meinhardt: Mathematisches Institut, Universit\"at Bonn, Endenicher Allee 60, 53115 Bonn, Germany}\\
\textit{\small E-mail address:} \texttt{\small sven@math.uni-bonn.de}\\
\end{document}